\numberwithin{equation}{subsection}
\newtheorem{theorem}{Theorem}[subsection]
\newtheorem{corollary}[theorem]{Corollary}
\newtheorem{definition}[theorem]{Definition}
\newtheorem{example}[theorem]{Example}
\newtheorem{lemma}[theorem]{Lemma}
\newtheorem{proposition}[theorem]{Proposition}
\newtheorem{remark}[theorem]{Remark}
\newtheorem{notation}[theorem]{Notation}
\begin{document}

\title[Stable Range Branching Algebras]{Distributive Lattices, Affine Semigroups, and\\
Branching Rules of the Classical Groups}
\author{Sangjib Kim}
\email{skim@maths.uq.edu.au}
\address{School of Mathematics and Physics, The University of Queensland, St Lucia, QLD 4072, Australia}
\subjclass[2000]{20G05, 05E15}

\begin{abstract}
We study algebras encoding stable range branching
rules for the pairs of complex classical groups of the same type in the
context of toric degenerations of spherical varieties. By lifting affine semigroup
algebras constructed from combinatorial data of branching multiplicities, we
obtain algebras having highest weight vectors in multiplicity spaces as
their standard monomial type bases. In particular, we identify a family of 
distributive lattices and their associated Hibi algebras which can uniformly 
describe the stable range branching algebras for all the pairs we consider.
\end{abstract}

\subjclass[2000]{20G05, 05E15}
\keywords{Classical groups, Branching rules, Distributive lattices, 
Toric deformations}
\maketitle

\section{Introduction}

\subsection{}
Let us consider a pair of complex algebraic groups $G$ and $H$ with
embedding $H \subset G$ and their completely reducible representations $V_{G}$ and $V_{H}$. 
If $V_{H}$ is irreducible, then a description of the multiplicity of $V_{H}$ in $V_{G}$ 
regarded as a representation of $H$ by restriction is called a \textit{branching rule for} $(G,H)$. 
By Schur's lemma, the branching multiplicity is equal to the dimension of the space 
$\mathrm{Hom}_{H}(V_{H},V_{G})$, which we will call the \textit{multiplicity space}.

\subsection{}\label{def_stable}
In this paper, we shall consider branching rules of the polynomial
representations of the following pairs $(G,H)$ of complex classical
groups: $({GL}_{m}, {GL}_{n})$, $({Sp}_{2m}, {Sp}_{2n})$, $({SO}_{p}, {SO}_{q})$.
Our goal is to study branching rules for $(G,H)$ collectively in the
context of toric degenerations of spherical varieties and to obtain an
explicit description of the multiplicity space $\mathrm{Hom}_{H}(V_{H}^{\mu}, 
V_{G}^{\lambda })$ when the length $\ell (\lambda )$ of highest weight 
$\lambda $ for $G$ satisfies the following \textit{stable range condition}:

\begin{enumerate}
\item $\ell (\lambda )\leq m$ for $({GL}_{m},{GL}_{n});$

\item $\ell (\lambda )\leq n$ for $({Sp}_{2m},{Sp}_{2n})$, $({SO}_{2m},
{SO}_{2n+1})$, $({SO}_{2m+1},{SO}_{2n+1});$

\item $\ell (\lambda )<n$ for $({SO}_{2m},{SO}_{2n})$, $({SO}_{2m+1},
{SO}_{2n}).$
\end{enumerate}

We shall construct an algebra whose graded components are spanned by the
highest weight vectors of irreducible representations of $H$ appearing in
each irreducible representation of $G$.

\subsection{}
To give a slightly more detailed overview, let us consider the ring 
$\mathcal{F}_{G}$ of regular functions over $G/U_{G}$ where $U_{G}$ is a
maximal unipotent subgroup of $G$. 
This ring is called the \textit{flag algebra for} $G$, because it can be 
realized as the multi-homogeneous coordinate ring of the flag variety. 
As a $G$-module, the flag algebra $\mathcal{F}_{G}$ contains exactly one 
copy of every irreducible representation of $G$ \cite{LT79,LT85}, and in 
this context the author  studied polynomial models for $\mathcal{F}_{G}$ 
and their flat degenerations \cite{Ki08,Ki09}. 

\medskip

By highest weight theory, the $U_{H}$-invariant subspace of $V_{G}^{\lambda }$ 
consists of the highest weight vectors of irreducible representations of $H$ 
appearing in $V_{G}^{\lambda }$. 
Therefore, the $U_{H} $-invariant subalgebra of $\mathcal{F}_{G}$ leads us to 
study the branching rules for $(G,H)$ collectively:
\begin{eqnarray}
\mathcal{F}_{G}^{U_{H}} &=&\sum_{\lambda \in \widehat{G}}\left(
V_{G}^{\lambda }\right) ^{U_{H}}  \label{general branching algebra} \\
&=&\sum_{\lambda \in \widehat{G}}\sum_{\mu \in \widehat{H}}m(V_{H}^{\mu},
V_{G}^{\lambda })\left( V_{H}^{\mu }\right) ^{U_{H}}  \notag
\end{eqnarray}%
where $m(V_{H}^{\mu },V_{G}^{\lambda })$ is the multiplicity of $V_{H}^{\mu}$ 
in $V_{G}^{\lambda }$. 

Moreover, we can impose a graded structure on 
$\mathcal{F}_{G}^{U_{H}}$ so that its graded components correspond to the
multiplicity spaces:
\begin{equation*}
m(V_{H}^{\mu },V_{G}^{\lambda })\left( V_{H}^{\mu }\right) ^{U_{H}}
\cong \mathrm{Hom}_{H}(V_{H}^{\mu },V_{G}^{\lambda })
\end{equation*}
for $\left( \lambda ,\mu \right) \in \widehat{G}\times \widehat{H}$. In this
sense, we may call $\mathcal{F}_{G}^{U_{H}}$ the \textit{branching algebra for} 
$(G,H)$. This algebra was introduced by Zelobenko. 
See \cite{Ze62} and \cite{ Ze73}.

\subsection{}

Recently, Howe and his collaborators studied branching algebras for classical 
symmetric pairs, especially their toric degenerations and expressions of branching 
multiplicities in terms of Littlewood-Richardson coefficients \cite{HJLTW, HTW}.
In the cases this paper concerns, using known combinatorics of branching rules, 
we can explicitly describe the multiplicity spaces and their degenerations.
More specifically, we show that the stable range branching algebras are deformations 
of semigroup algebras of generalized semistandard tableaux or equivalently Gelfand-Tsetlin
patterns, and therefore provide a precise connection between the multiplicity 
space and the combinatorial objects which count its dimension.

Starting from combinatorial data of stable range branching
multiplicities, we shall construct an affine semigroup and its semigroup algebra
graded by the pairs of highest weights for the classical groups $G$ and $H$ listed in 
\S \ref{def_stable}. This algebra can be realized as a Hibi algebra over a distributive lattice. 
Then, by using toric deformation techniques, we lift the Hibi algebra to construct 
a polynomial model of the branching algebra for $(G,H)$. We study its finite 
presentation and standard monomial type basis. 
It turns out that there is a particular type of distributive lattices 
whose Hibi algebras can uniformly describe stable range branching algebras 
for all the pairs $(G,H)$ we consider. 

\smallskip

We remark that this Hibi algebra structure in branching problems has 
interesting counterparts in tensor product decomposition problems, which 
can be explained by reciprocity properties between 
branchings and tensor products in representation theory. 
For this direction, we refer readers to \cite{HL07, HKL, KL}.

\subsection{}

This paper is arranged as follows: In Section 2, we develop the
combinatorial tools we will use. In Section 3, we study the branching
algebra for $({GL}_{m},{GL}_{n})$ and its toric degeneration. In Section
4 and Section 5, we study the distributive lattices and affine semigroups
associated with the branching rules for $({Sp}_{2m},{Sp}_{2n})$ and $({SO}_{p},{SO}_{q})$, 
and construct the corresponding stable range branching algebras.

\medskip

\section{Combinatorics of Branchings}

This section is to prepare us the combinatorial ingredients we will use to
construct stable range branching algebras.

\subsection{}

The \textit{Gelfand-Tsetlin}(GT) \textit{poset} for ${GL}_{m}$ is the poset
\begin{equation*}
\Gamma _{m}=\left\{ x_{j}^{(i)}:1\leq i\leq m, 1\leq j\leq i\right\}
\end{equation*}
satisfying $x_{j}^{(i+1)}\geq x_{j}^{(i)}\geq x_{j+1}^{(i+1)}$ for all $i$
and $j$. The elements of $\Gamma _{m}$ can be listed in a reversed
triangular array so that $x_{j}^{(i)}$ are weakly decreasing from left to right
along diagonals as GT patterns are originally drawn \cite{GT50}. Counting
from bottom to top, we will call $x^{(r)}=(x_{1}^{(r)},x_{2}^{(r)},
\cdots,x_{r}^{(r)})$ the \textit{$r$-th row} of $\Gamma _{m}$.

\begin{definition}
\begin{enumerate}
\item For $m>n$, the GT poset for $({GL}_{m},{GL}_{n})$ is
the following subposet of $\Gamma _{m}$:
\begin{equation*}
\Gamma _{m}^{n}=\left\{ x_{j}^{(i)}\in \Gamma _{m}:n\leq i\leq m\right\}.
\end{equation*}
\item In $\Gamma _{m}^{n}$, for $m\geq k$ we define the GT\ poset of length $k$ as
\begin{equation*}
\Gamma _{m,k}^{n}=\left\{ x_{j}^{(i)} \in \Gamma _{m}^{n}:j\leq k \right\}.
\end{equation*}
\end{enumerate}
\end{definition}

For example, $\Gamma _{6,4}^{3}$ can be drawn as
\begin{equation} \label{Gamm643}
\begin{array}{ccccccccc}
x_{1}^{(6)} &  & x_{2}^{(6)} &  & x_{3}^{(6)} &  & x_{4}^{(6)} &  &  \\ 
& x_{1}^{(5)} &  & x_{2}^{(5)} &  & x_{3}^{(5)} &  & x_{4}^{(5)} &  \\ 
&  & x_{1}^{(4)} &  & x_{2}^{(4)} &  & x_{3}^{(4)} &  & x_{4}^{(4)} \\ 
&  &  & x_{1}^{(3)} &  & x_{2}^{(3)} &  & x_{3}^{(3)} & 
\end{array}%
\end{equation}

\medskip

\subsection{}\label{Sec_taborder}
Next, let us consider the set $\mathcal{L}_{m}$ of all nonempty 
subsets of $\{1,2,\cdots ,m\}$. We shall write
\begin{equation*}
I=[i_{1},\cdots ,i_{a}]
\end{equation*}
for the subset consisting of $i_{c}$ with $1\leq i_{1}<\cdots <i_{a}\leq m$.
The \textit{length} $|I|=a$ of $I$ is the number of elements in $I$.

\medskip

The following partial order $\preceq$, called the \textit{tableau order}, can be
imposed on $\mathcal{L}_{m}$: for two elements $I$ and $J$ of $\mathcal{L}_m$, we say $I\preceq
J$, if $|I| \geq |J|$ and the $c$-th smallest
element in $I$ is less than or equal to the $c$-th smallest element in 
$J$ for $1\leq c\leq |J|$. 
Then, $\mathcal{L}_{m}$ with $\preceq $ forms a distributive lattice 
whose meet $\wedge $ and join $\vee $ are, 
for $I=[i_{1},\cdots ,i_{a}]$ and $J=[j_{1},\cdots ,j_{b}]$ with $\ a\leq b$, 
\begin{eqnarray*}
I\wedge J &=&[\min (i_{1},j_{1}),\cdots ,\min (i_{a},j_{a}),i_{a+1},
\cdots,i_{b}] \\
I\vee J &=&[\max (i_{1},j_{1}),\cdots ,\max (i_{a},j_{a})].
\end{eqnarray*}
It is straightforward to check the following subposets are also distributive lattices.

\begin{definition}\label{columnDL}
\begin{enumerate}
 \item  For $m>n$, the distributive lattice $\mathcal{L}_{m}^{n}$
for $({GL}_{m},{GL}_{n})$ is the subposet of $\mathcal{L}_{m}$ consisting of
the following elements:
\begin{eqnarray*}
&&[1,2,\cdots ,r-1, r,a_{1},a_{2},\cdots ,a_{s}], \\
&&[1,2,\cdots ,r-1, r], \\
&&[a_{1},a_{2},\cdots ,a_{s}]
\end{eqnarray*}
where $r\leq n$ and $n+1\leq a_1 < \cdots < a_s \leq m$.

 \item For $k\leq m$, we let $\mathcal{L}_{m,k}^{n}$ denote 
the subposet of $\mathcal{L}_{m}^{n}$ consisting of elements of length not greater than $k$:
\begin{equation*}
\mathcal{L}_{m,k}^{n}=\left\{ I\in \mathcal{L}_{m}^{n}: 
|I|\leq k \right\}.
\end{equation*}
\end{enumerate}
\end{definition}

\subsection{}
The poset structure of $\mathcal{L}_{m,k}^{n}$ can be read from the GT poset 
$\Gamma _{m,k}^{n}$ of length $k$. For this, let us impose a partial order
on the set of order increasing subsets of $\Gamma _{m,k}^{n}$ as follows.
For two order increasing subsets $A$ and $B$ of $\Gamma _{m,k}^{n}$, we say 
$A$ is bigger than $B$, if $A\subseteq B$ as sets. Note that here we use the
reverse inclusion order on sets, because we use order increasing sets instead of 
order decreasing sets.

\begin{proposition}\label{joinirred2}
There is an order isomorphism between $\mathcal{L}_{m,k}^{n}$ and 
the set of order increasing subsets of $\Gamma _{m,k}^{n}$.
\end{proposition}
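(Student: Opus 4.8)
The plan is to exhibit an explicit order isomorphism, viewing the statement conceptually as an instance of the Birkhoff representation theorem: since $\mathcal{L}_{m,k}^{n}$ is a finite distributive lattice, it is recovered from its poset of join-irreducibles, and the assertion is precisely that this poset is $\Gamma_{m,k}^{n}$. Rather than extracting join-irreducibles abstractly, I would build the isomorphism directly. Given $I=[i_{1},\dots,i_{a}]\in\mathcal{L}_{m,k}^{n}$, define
\[
\phi(I)=\left\{ x_{j}^{(i)}\in\Gamma_{m,k}^{n} : i_{j}\le i \right\},
\]
the support of the $0$-$1$ Gelfand--Tsetlin pattern attached to the single-column shape $I$, where $x_{j}^{(i)}=1$ records that at least $j$ entries of $I$ do not exceed $i$. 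Order increasing subsets of $\Gamma_{m,k}^{n}$ under reverse inclusion form a distributive lattice, and this is the target of $\phi$.

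First I would check that $\phi(I)$ is order increasing. If $x_{j}^{(i)}\in\phi(I)$, that is $i_{j}\le i$, then the elements covering it from above within $\Gamma_{m,k}^{n}$, namely $x_{j}^{(i+1)}$ and $x_{j-1}^{(i-1)}$ (when present), also lie in $\phi(I)$: the former because $i_{j}\le i<i+1$, the latter because $i_{j-1}<i_{j}\le i$ forces $i_{j-1}\le i-1$. Thus $\phi(I)$ is closed upward under the defining relations $x_{j}^{(i+1)}\ge x_{j}^{(i)}\ge x_{j+1}^{(i+1)}$. For bijectivity I would construct the inverse: along each diagonal $j$ the elements $x_{j}^{(i)}$ form a chain, so an order increasing subset $A$ is an up-interval on each diagonal with a well defined threshold row, and the cross relation $x_{j-1}^{(i-1)}\ge x_{j}^{(i)}$ forces these thresholds to be strictly increasing in $j$; reading them off (together with the forced initial segment below row $n$) recovers a unique column $I$ with $\phi(I)=A$. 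For order preservation, recall that $I\preceq J$ means $|I|\ge|J|$ with $i_{c}\le j_{c}$ for $c\le|J|$; this gives $\{x_{c}^{(i)}:i\ge i_{c}\}\supseteq\{x_{c}^{(i)}:i\ge j_{c}\}$ diagonal by diagonal, hence $\phi(I)\supseteq\phi(J)$, so in the reverse inclusion order $\phi(J)$ is the bigger element, i.e.\ $\phi(I)\preceq\phi(J)$; the same inequalities read backwards give the converse.

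The main obstacle is bookkeeping rather than depth, and it concentrates at the boundary row $i=n$. One must confirm that the three families of columns in Definition~\ref{columnDL} correspond exactly to the order increasing subsets of $\Gamma_{m,k}^{n}$: the defining shape of an initial segment $1,2,\dots,r$ followed by entries exceeding $n$ is precisely the statement that the part of the $0$-$1$ pattern below row $n$ is forced, so that the pattern is faithfully recorded by its restriction to the rows $n\le i\le m$ retained in $\Gamma_{m}^{n}$; and the length truncation $|I|\le k$ corresponds exactly to discarding the diagonals $j>k$. Once this matching of the defining constraints of $\mathcal{L}_{m,k}^{n}$ with the admissibility constraints of $\Gamma_{m,k}^{n}$ is pinned down, $\phi$ is an order isomorphism, which in particular identifies $\Gamma_{m,k}^{n}$ with the poset of join-irreducibles of $\mathcal{L}_{m,k}^{n}$ as Birkhoff's theorem predicts.
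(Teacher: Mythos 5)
Your map $\phi(I)=\{x_j^{(i)}:i_j\le i\}$ is exactly the paper's correspondence $I\mapsto A_I$ from (\ref{order increasing}) (since $i_j\le i$ is equivalent to $j\le s_i$), and your verification---upward closure, inversion via diagonal thresholds, order preservation in both directions, framed as an instance of Birkhoff's theorem---is the same approach as the paper's, merely filling in the check the paper dismisses as straightforward. One small slip that your own boundary bookkeeping already absorbs: the thresholds are strictly increasing only above row $n$; for diagonals $j\le n$ they can tie at the value $n$ (e.g.\ when $A$ is all of $\Gamma_{m,k}^{n}$), and it is precisely the forced initial segment $1,2,\dots,r$ in Definition \ref{columnDL} that makes such ties recover a unique $I$.
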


This is an easy computation similar to \cite[Theorem 3.8]{Ki08}. For each 
$I\in \mathcal{L}_{m,k}^{n}$, we define the corresponding order increasing
subset $A_{I}$ of $\Gamma _{m,k}^{n}$ as
\begin{equation}
A_I = \bigcup_{n \leq i \leq m} \left\{ x_{1}^{(i)},x_{2}^{(i)}\cdots ,x_{s_{i}}^{(i)}\right\}
\label{order increasing}
\end{equation}
where $s_{i}$ is the number of entries in $I$ less than
or equal to $i$. For example, the subset of $\Gamma_{6,4}^3$ given in (\ref{Gamm643})
corresponding to $I=[1,4,6] \in \mathcal{L}_{6,4}^3$ is
\begin{equation*}
\begin{array}{ccccccccc}
x_{1}^{(6)} &  & x_{2}^{(6)} &  & x_{3}^{(6)} &  &  &  &  \\ 
& x_{1}^{(5)} &  & x_{2}^{(5)} &  &  &  &  &  \\ 
&  & x_{1}^{(4)} &  & x_{2}^{(4)} &  &  &  & \\ 
&  &  & x_{1}^{(3)} &  &  &  & & 
\end{array}%
\end{equation*}
Then, it is straightforward to check that this correspondence
gives an order isomorphism. In fact, this Proposition gives an example of Birkhoff's
representation theorem or the fundamental theorem for finite distributive
lattices \cite[Theorem 3.4.1]{Sta97}. See \cite[\S 3.3]{Ki08} for further
details.

\smallskip

For $k \leq n$ and $d \geq 0$, we can identify $\Gamma_{m,k}^n$ with $\Gamma_{m+d,k}^{n+d}$
by shifting the $i$-th row $x^{(i)}$ up to the $(i+d)$-th row $x^{(i+d)}$ for
$n \leq i \leq m$, and then the above Proposition gives
\begin{corollary}\label{L-conversion}
For $k \leq n$ and $d \geq 0$, there is an order isomorphism between distributive 
lattices $$ \mathcal{L}_{m, k}^n \cong  \mathcal{L}_{m+d, k}^{n+d}$$
\end{corollary}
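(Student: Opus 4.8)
The plan is to reduce the claimed lattice isomorphism to an order isomorphism of the underlying GT posets and then appeal to Proposition \ref{joinirred2}. First I would make the row-shifting map explicit: define $\phi : \Gamma_{m,k}^{n} \to \Gamma_{m+d,k}^{n+d}$ by $\phi(x_{j}^{(i)}) = x_{j}^{(i+d)}$ for $n \leq i \leq m$ and $1 \leq j \leq k$. The hypothesis $k \leq n$ is exactly what makes this clean: since $i \geq n \geq k$ in every row of $\Gamma_{m,k}^{n}$ and $i+d \geq n+d \geq n \geq k$ in every row of $\Gamma_{m+d,k}^{n+d}$, each row of both posets contains precisely the entries $x_{1}^{(\cdot)}, \cdots, x_{k}^{(\cdot)}$ with no truncation. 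Hence $\phi$ sends the $(m-n+1)$ full rows of width $k$ of the source bijectively onto the $(m-n+1)$ full rows of width $k$ of the target, preserving column indices, and is therefore a bijection.

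Second, I would check that $\phi$ is an order isomorphism of posets. The partial order on a GT poset is generated by the covering relations $x_{j}^{(i+1)} \geq x_{j}^{(i)}$ and $x_{j}^{(i)} \geq x_{j+1}^{(i+1)}$, which depend only on the differences of the row and column indices of the two entries involved, not on their absolute values. Since $\phi$ increases every row index by the constant $d$ and leaves every column index fixed, it carries each such generating relation in $\Gamma_{m,k}^{n}$ to the corresponding one in $\Gamma_{m+d,k}^{n+d}$ and conversely; as it is a bijection, passing to transitive closures shows that $x_{j}^{(i)} \geq x_{j'}^{(i')}$ if and only if $\phi(x_{j}^{(i)}) \geq \phi(x_{j'}^{(i')})$. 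Thus $\phi$ is an order isomorphism.

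Finally, I would invoke the fact that an order isomorphism of finite posets induces an isomorphism between their lattices of order increasing subsets: the assignment $A \mapsto \phi(A)$ sends order increasing subsets to order increasing subsets and respects the reverse-inclusion order, since both notions are formulated purely in terms of the partial order that $\phi$ preserves. Composing with the order isomorphisms supplied by Proposition \ref{joinirred2} on each side then yields
\[
\mathcal{L}_{m,k}^{n} \;\cong\; \{\text{order increasing subsets of } \Gamma_{m,k}^{n}\} \;\cong\; \{\text{order increasing subsets of } \Gamma_{m+d,k}^{n+d}\} \;\cong\; \mathcal{L}_{m+d,k}^{n+d}.
\]
No step here is genuinely difficult; the only point deserving care, and the place where the hypothesis $k \leq n$ is used, is the verification in the first step that every row of both posets is a \emph{full} row of width $k$, so that $\phi$ is a well-defined bijection rather than a partial matching between rows of unequal width.
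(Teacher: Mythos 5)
Your proof is correct and follows essentially the same route as the paper: the paper also identifies $\Gamma_{m,k}^{n}$ with $\Gamma_{m+d,k}^{n+d}$ by the row shift $x^{(i)} \mapsto x^{(i+d)}$ (valid precisely because $k \leq n$ makes every row a full row of width $k$) and then invokes Proposition \ref{joinirred2} on both sides. Your write-up simply makes explicit the verification, left implicit in the paper, that the shift is an order isomorphism and that order isomorphisms of posets induce isomorphisms of the lattices of order increasing subsets.
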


\subsection{}

A \textit{shape} or \textit{Young diagram} is a left-justified array of
boxes with weakly decreasing row lengths. We identify a shape with its
sequence of row lengths $D=(r_{1},r_{2},\cdots )$. The following example
shows the shape $D=(4,2,1)$:
\begin{equation*}
\young(\ \ \ \ ,\ \ ,\ )
\end{equation*}
If $l$ is maximal with $r_{l}\neq 0$, then we call $l$ the \textit{length}
of $D$ and write $\ell (D)=l$. If we flip a shape $D$ over its main diagonal
that slants down from upper left to lower right, then we obtain its 
\textit{conjugate} $D^{t}$. With the previous example, we have $\ell (D)=3$ and 
$D^{t}=(4,2,1)^{t}=(3,2,1,1)$. For $F=(f_{1},f_{2},\cdots )$ and 
$D=(d_{1},d_{2},\cdots )$, if $f_{r}\geq d_{r}$ for all $r$, then we write 
$F\supseteq D$ and let $F/D$ denote the \textit{skew shape} having $F$ as its
outer shape and $D$ as its inner shape.

\subsection{}\label{tab-chain}

Consider a multiset $\{I_{1},\cdots ,I_{s}\}\subset \mathcal{L}_{m}$ with 
$|I_{c}|=l_{c}$ for each $c$. A concatenation $\mathsf{t}$ of its
elements is called a \textit{tableau}, if they are arranged so that 
$l_{c}\geq l_{c+1}$ for all $c$. The \textit{shape} $sh(\mathsf{t})$ of 
$\mathsf{t}$ is the Young diagram $(l_{1},\cdots ,l_{s})^{t}$ and the 
\textit{length} $\ell (\mathsf{t})$ of $\mathsf{t}$ is the length of its shape. 
If $\{I_{1},\cdots ,I_{s}\}$ is taken from the subposet $\mathcal{L}_{m}^{n}$,
then we shall specify the outer and inner shapes of $\mathsf{t}$.

\begin{definition}\label{def_GL-stm}
A standard tableau $\mathsf{t}$ for $({GL}_{{m}},{GL}_{{n}})$ is a multiple chain 
$$
\mathsf{t}=\left( I_{1}\preceq \cdots \preceq I_{s} \right)
$$ 
in $\mathcal{L}_{m}^{n}$. The shape $sh_{n}(\mathsf{t})$ of $\mathsf{t}$ is $F/D$ 
where%
\begin{equation*}
F =(|I_{1}|,\cdots , |I_{s}|)^{t} \hbox{ and } D =(d_{1},\cdots ,d_{n})
\end{equation*}%
and $d_{r}$ is the number of $r$'s in $\mathsf{t}$ for $1\leq r\leq n$.
\end{definition}

For example, the multiple chain $[1,2,3,6] \preceq  [1,2,5,6] \preceq  [1,2,6] \preceq  [1,4] \preceq  [5] \preceq  [5]$ in 
$\mathcal{L}_{6,4}^3$ forms a standard tableau for 
$({GL}_{{6}},{GL}_{{3}})$ of shape $(6,4,3,2)/(4,3,1)$:
\begin{equation*}
\young(111155,2224,356,66)
\end{equation*}
which can be, after erasing $r \leq 3$, identified with the following skew semistandard tableau
\begin{equation*}
\young(\ \ \ \ 55,\ \ \ 4,\ 56,66)
\end{equation*}

\medskip

\subsection{}\label{grading_GL}
The following set of pairs of Young diagrams will be used frequently: for 
$a\geq b$,
\begin{equation*}
\Lambda _{a,b}=\{(F,D):\ell (F)\leq a,\ell (D)\leq b,F\supseteq D\}.
\end{equation*}
We note that if $(F,D)\in \Lambda_{a,b}$, then $\ell (D)\leq \min (\ell (F),b)$.
This is because $F\supseteq D$ implies $\ell (F)\geq \ell (D)$.

\medskip

\subsection{}
Let $\mathcal{T}_{m}^{n}(F,D)$ denote the set of all standard tableaux
for $({GL}_{{m}},{GL}_{{n}})$ whose shapes are $F/D$. For each $k$ with 
$n\leq k\leq m$, we consider the following disjoint union over 
$\Lambda_{k,n}$
\begin{equation*}
\mathcal{T}_{m,k}^{n}=\bigcup_{(F,D)\in \Lambda_{k,n}}\mathcal{T}_{m}^{n}(F,D)
\end{equation*}

As illustrated by the example in \S \ref{tab-chain}, if we identify the elements 
of $\mathcal{L}_{m}^{n}$ with single-column
tableaux, then our definition of standard tableaux for $({GL}_{{m}},{GL}_{{n}})$
of shape $F/D$ agrees with the usual definition of skew semistandard Young
tableaux of shape $F/D$ with entries from $\{n+1,\cdots ,m\}$. 

By setting tableaux in the context of a finite distributive lattice (Definition \ref{def_GL-stm}), 
we can exploit an additional structure: Proposition \ref{joinirred2} leads us to study 
$\mathcal{L}_{m, k}^{n}$ in terms of the order increasing subsets of 
$\Gamma _{m, k}^{n}$, and the order increasing subsets of $\Gamma _{m, k}^{n}$ 
give rise to the order preserving maps from $\Gamma _{m, k}^{n}$ to $\{0,1\}$. 
More generally,

\begin{definition}
A GT pattern for $({GL}_{{m}},{GL}_{{n}})$ is an order preserving map from
the GT poset $\Gamma _{m}^{n}$ for $({GL}_{m},{GL}_{n})$ to the set of
non-negative integers:
\begin{equation*}
\mathsf{p}:\Gamma _{m}^{n}\rightarrow \mathbb{Z}_{\geq 0}.
\end{equation*}
The $r$-th row of $\mathsf{p}$ is 
$(\mathsf{p}(x_{1}^{(r)}),\cdots ,\mathsf{p}(x_{r}^{(r)}))$ for $n\leq r\leq m$. 
The \textit{type} of $\mathsf{p}$ is $F/D$ where $F$ and $D$ are its $m$-th 
row and the $n$-th row respectively.
\end{definition}

Note that if $\ell (F)\leq k$, then the support of every GT pattern 
$\mathsf{p}$ of type $F/D$ lies in the GT poset $\Gamma _{m,k}^{n}$ of 
length $k$. Therefore, we have GT patterns defined on $\Gamma _{m,k}^{n}$
\begin{equation*}
\mathsf{p}:\Gamma _{m,k}^{n}\rightarrow \mathbb{Z}_{\geq 0}.
\end{equation*}
Let $\mathcal{P}_{m}^{n}(F,D)$ denote the set of all GT patterns for 
$({GL}_{{m}},{GL}_{{n}})$ whose type is $F/D$. Then for each $k$ with 
$n\leq k\leq m$, we consider the following disjoint union over $\Lambda _{k,n}$:
\begin{equation}\label{GT-Patterns}
\mathcal{P}_{m,k}^{n}=\bigcup_{(F,D)\in \Lambda _{k,n}}\mathcal{P}_{m}^{n}(F,D)
\end{equation}

\subsection{}\label{polycone}

Since the sum of two order preserving maps is an order preserving map, 
$\mathcal{P}_{m,k}^{n}$ is a semigroup with function addition as its
multiplication, or more precisely a monoid with the zero function as its
identity. 
We further note that $\mathcal{P}_{m,k}^{n}$ is generated by the order preserving
maps from $\Gamma _{m,k}^{n}$ to $\{0,1\}$. Then, by identifying each GT pattern 
$\mathsf{p}$ with $(\mathsf{p}(x_{j}^{(i)}))\in \mathbb{Z}^{N}$ where $N$ is
the number of elements in $\Gamma _{m,k}^{n}$, we see that $\mathcal{P}_{m,k}^{n}$ 
can be understood as an \textit{affine semigroup}, i.e., a finitely generated semigroup which is
isomorphic to a subsemigroup of $\mathbb{Z}^{N}$ containing $0$ for some $N$
\cite{BH93}.

\smallskip

This semigroup structure on GT patterns provides a simple
bijection between $\mathcal{T}_{m,k}^{n}$ and $\mathcal{P}_{m,k}^{n}$.

\begin{proposition} \label{bijection YT-GT}
For each $(F,D) \in \Lambda_{m,n}$, there is a bijection
between $\mathcal{T}_{m}^{n}(F,D)$ and $\mathcal{P}_{m}^{n}(F,D)$.
\end{proposition}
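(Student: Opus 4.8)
The plan is to build the bijection directly from the two structures already in place: the order isomorphism of Proposition \ref{joinirred2} between $\mathcal{L}_m^n$ and the order increasing subsets of $\Gamma_m^n$, together with the fact that adding order preserving maps again yields an order preserving map. The guiding idea is that a standard tableau is precisely a chain in $\mathcal{L}_m^n$, which Proposition \ref{joinirred2} converts into a nested chain of order increasing subsets of $\Gamma_m^n$; summing the indicator functions of these subsets produces exactly a GT pattern, and the level-set decomposition of a GT pattern reverses the construction.

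First I would define the forward map. Given a standard tableau $\mathsf{t}=(I_1\preceq\cdots\preceq I_s)$ in $\mathcal{T}_m^n(F,D)$, assign to each column $I_c$ its order increasing subset $A_{I_c}\subseteq\Gamma_m^n$ as in \eqref{order increasing}, and let $\chi_{A_{I_c}}\colon\Gamma_m^n\to\{0,1\}$ be its indicator, which is order preserving since $A_{I_c}$ is order increasing. Setting $\mathsf{p}=\sum_{c=1}^{s}\chi_{A_{I_c}}$ gives an order preserving map $\Gamma_m^n\to\mathbb{Z}_{\geq 0}$, i.e.\ a GT pattern, with $\mathsf{p}(x_j^{(i)})=\#\{c: x_j^{(i)}\in A_{I_c}\}=\#\{c:\ I_c\text{ has at least }j\text{ entries}\leq i\}$. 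I would then check that the type is $F/D$. For the $m$-th row this is immediate: $\mathsf{p}(x_j^{(m)})=\#\{c:|I_c|\geq j\}$, which is the $j$-th part of the conjugate of $(|I_1|,\dots,|I_s|)$, namely $F$. Thus the forward map sends $\mathcal{T}_m^n(F,D)$ into $\mathcal{P}_m^n(F,D)$ once the $n$-th row is also identified with $D$.

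For the inverse I would use the canonical level-set decomposition. Given $\mathsf{p}\in\mathcal{P}_m^n(F,D)$, form $A_t=\{x\in\Gamma_m^n:\mathsf{p}(x)\geq t\}$ for $t\geq 1$. Order preservation makes each $A_t$ an order increasing subset, the $A_t$ are nested as $A_1\supseteq A_2\supseteq\cdots$, and $\mathsf{p}=\sum_t\chi_{A_t}$; this is the unique expression of $\mathsf{p}$ as a sum of indicators of a nested chain of order increasing subsets. Applying the inverse of the isomorphism of Proposition \ref{joinirred2} to the chain $(A_t)$ returns a chain $I_1\preceq\cdots\preceq I_s$ in $\mathcal{L}_m^n$, i.e.\ a standard tableau. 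The two constructions are mutually inverse because, for nested subsets $A_{I_1}\supseteq\cdots\supseteq A_{I_s}$, the value $\sum_c\chi_{A_{I_c}}(x)$ equals the largest index $t$ with $x\in A_{I_t}$, so the level sets of $\sum_c\chi_{A_{I_c}}$ recover exactly the $A_{I_c}$.

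The step that needs genuine care, rather than being a formal consequence of Proposition \ref{joinirred2}, is matching the $n$-th row of $\mathsf{p}$ with $D$. Here the special shape of the lattice $\mathcal{L}_m^n$ from Definition \ref{columnDL} is essential: in any column $I_c$ the entries that are $\leq n$ form an initial segment $1,2,\dots,r_c$, so the number $s_n(I_c)$ of entries of $I_c$ not exceeding $n$ equals $r_c$. Consequently $\mathsf{p}(x_r^{(n)})=\#\{c: s_n(I_c)\geq r\}=\#\{c:\ r\in I_c\}=d_r$ for $1\leq r\leq n$, which is precisely $D$. I would make this use of the ``initial segment'' structure explicit; the compatibility $F\supseteq D$ then follows from the GT inequalities, and the remaining claims reduce to Proposition \ref{joinirred2}.
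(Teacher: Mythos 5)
Your proposal is correct and follows essentially the same route as the paper: the forward map you define is exactly the correspondence $\mathsf{t}\mapsto\mathsf{p}_{\mathsf{t}}=\sum_{r}\mathsf{p}_{I_{r}}$ of (\ref{procedure2}), built on Proposition \ref{joinirred2}. The only difference is that you spell out what the paper defers to its references---the level-set construction of the inverse and the verification that the $m$-th and $n$-th rows of $\mathsf{p}_{\mathsf{t}}$ equal $F$ and $D$ (using the initial-segment structure of columns in $\mathcal{L}_{m}^{n}$)---which is a welcome but not substantively different elaboration.
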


\begin{proof}
The bijection in Proposition \ref{joinirred2} provides the bijection between 
$\mathcal{L}_{m}^{n}$ and the set of characteristic functions on order
increasing subsets of $\Gamma _{m}^{n}$. This bijection can be extended to 
multiple chains in $\mathcal{L}_{m}^{n}$ as follows. Let 
$\mathsf{t}=(I_{1}\preceq \cdots \preceq I_{c})$ be a multiple chain in 
$\mathcal{L}_{m}^{n}$ and $\mathsf{p}_{I_{r}}$ be the characteristic 
function on the order increasing set $A_{I_{r}}$ corresponding to $I_{r}$ 
given in (\ref{order increasing}) for each $r$. Then we can consider the 
following correspondence:
\begin{equation}
\mathsf{t}=(I_{1}\preceq \cdots \preceq I_{c})\mapsto \mathsf{p}_{\mathsf{t}}
= \sum\limits_{r=1}^{c}\mathsf{p}_{I_{r}}.  \label{procedure2}
\end{equation}
Since the order preserving characteristic functions over $\Gamma _{m}^{n}$
generate $\mathcal{P}_{m}^{n}$, this correspondence gives a bijection
between $\mathcal{T}_{m}^{n}(F,D)$ and $\mathcal{P}_{m}^{n}(F,D)$. For
further details, see \cite{How05, Ki08}.
\end{proof}

\subsection{}

We remark that by identifying GT patterns $\mathsf{p}$ with their images 
$(\mathsf{p}(x_{j}^{(i)}))$, our definition is equivalent to the usual definition of 
GT patterns. The correspondence between the set of semistandard tableaux 
and the set of GT patterns given in the above proposition is the same as 
the known bijection or conversion procedure (e.g., \cite[\S 8.1.2]{GW09}), which 
is usually explained by successive applications of the Pieri's rules.

\medskip

For example, a pattern $\mathsf{p} \in \mathcal{P}_{6,4}^{3}$ can be
visualized by listing its value at $x_j^{(i)} \in \Gamma_{6,4}^3$

\begin{equation}\label{pattern-exp}
\begin{array}{ccccccccc}
3 &  & 3 &  & 3 &  & 1 &  &  \\ 
& 3 &  & 3 &  & 2 &  & 0 &  \\ 
&  & 3 &  & 2 &  & 1 &  & 0\\ 
&  &  & 2 &  & 2 &  & 1& 
\end{array}%
\end{equation}
Then it is the sum of the GT patterns 
\begin{equation*}
\begin{array}{ccccccccc}
1 &  & 1 &  & 1 &  & 1 &  &  \\ 
& 1 &  & 1 &  & 1 &  & 0 &  \\ 
&  & 1 &  & 1 &  & 1 &  & 0\\ 
&  &  & 1 &  & 1 &  & 1& 
\end{array}
\ +
\begin{array}{ccccccccc}
1 &  & 1 &  & 1 &  & 0 &  &  \\ 
& 1 &  & 1 &  & 1 &  & 0 &  \\ 
&  & 1 &  & 1 &  & 0 &  & 0\\ 
&  &  & 1 &  & 1 &  & 0& 
\end{array}
\ +\begin{array}{ccccccccc}
1 &  & 1 &  & 1 &  & 0 &  &  \\ 
& 1 &  & 1 &  & 0 &  & 0 &  \\ 
&  & 1 &  & 0 &  & 0 &  & 0\\ 
&  &  & 0 &  & 0 &  & 0& 
\end{array}
\end{equation*}
corresponding to the elements $[1,2,3,6] \preceq [1,2,5] \preceq [4,5,6]$ of 
$\mathcal{L}_{6,4}^3$. This multiple chain can be identified with the following
standard tableau in $\mathcal{T}_{6,4}^3$
\begin{equation}\label{ex-ss}
\young(114,225,356,6)
\end{equation}
of shape $(3,3,3,1)/(2,2,1)$. Note that to (\ref{ex-ss}), we can apply the 
usual conversion procedure (e.g., \cite[\S 8.1.2]{GW09}) to obtain 
its corresponding pattern---by successively striking out the boxes with 
$6$, $5$, and $4$ in the tableau (\ref{ex-ss}), we obtain each row of the 
pattern (\ref{pattern-exp}).

\subsection{}\label{Hibi_mkn}
Now we study an algebra attached to $\mathcal{L}_{m,k}^{n}$ in terms of the set 
$\mathcal{T}_{m,k}^n$ of standard tableaux and the set $\mathcal{P}_{m,k}^n$ of 
GT patterns.

\begin{definition} [{\protect\cite{Hi87}}]
The \textit{Hibi algebra} $\mathcal{H}(L)$ over a finite distributive lattice $L$ 
is the quotient ring of the polynomial ring $\mathbb{C}[z_{\gamma }:\gamma \in L]$ 
by the ideal generated by 
$z_{\alpha }z_{\beta } - z_{\alpha \wedge \beta }z_{\alpha \vee \beta }$ for all 
incomparable pairs $(\alpha ,\beta )$ of $L$:
\begin{equation*}
\mathcal{H}(L) = \mathbb{C}[z_{\gamma }:\gamma \in L] / 
\langle z_{\alpha }z_{\beta } - z_{\alpha \wedge \beta }z_{\alpha \vee \beta } \rangle
\end{equation*}
\end{definition}

Let us consider the Hibi algebra over $\mathcal{L}_{m,k}^{n}$
\begin{equation*}
\mathcal{H}_{m,k}^{n}=\mathcal{H}(\mathcal{L}_{m,k}^{n}).
\end{equation*}
We shall identify the monomials $\prod_r {z_{I_r}}$ in $\mathcal{H}_{m,k}^{n}$
with the tableaux consisting of elements $I_r \in \mathcal{L}_{m,k}^{n}$. For 
example, the above tableau (\ref{ex-ss}) will be used to denote the monomial 
$$
z_{[1236]} z_{[125]} z_{[456]} \in \mathcal{H}_{6,4}^{3}
$$
Recall that standard tableaux are multiple chains in $\mathcal{L}_{m,k}^{n}$ 
(Definition \ref{def_GL-stm}). Then the following is from a general property 
of Hibi algebras \cite{Hi87, How05}.

\begin{lemma} \label{Hibi-basis}
\begin{enumerate}
\item The set $\mathcal{T}_{m,k}^{n}$ of all standard tableaux for 
$({GL}_{{m}},{GL}_{{n}})$ whose shapes are $F/D$ with $\ell (F)\leq k$ form 
a $\mathbb{C}$-basis for the Hibi algebra $\mathcal{H}_{m,k}^{n}$.

\item In particular, $\mathcal{H}_{m,k}^{n}$ is graded by $\Lambda_{k,n}$, and 
the set $\mathcal{T}_m ^n(F,D)$ of standard tableaux for $(GL_m, GL_n)$
of shape $F/D$ form a $\mathbb{C}$-basis for the $(F,D)$-graded component of 
$\mathcal{H}_{m,k}^n$.
\end{enumerate}
\end{lemma}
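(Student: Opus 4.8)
The plan is to derive Lemma \ref{Hibi-basis} as a direct consequence of Hibi's fundamental theorem on algebras over distributive lattices, together with the combinatorial dictionary established earlier in the section. The key conceptual input is Hibi's theorem \cite{Hi87}: for any finite distributive lattice $L$, the Hibi algebra $\mathcal{H}(L)$ is an integral domain whose standard monomials---products $z_{\gamma_1}\cdots z_{\gamma_t}$ with $\gamma_1 \preceq \cdots \preceq \gamma_t$ a multiple chain in $L$---form a $\mathbb{C}$-basis. So for part (1), I would first invoke this theorem for the specific lattice $L = \mathcal{L}_{m,k}^n$, which is indeed a finite distributive lattice as noted in \S\ref{Sec_taborder} and Definition \ref{columnDL}. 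Then I would observe that, by Definition \ref{def_GL-stm}, a standard tableau $\mathsf{t}$ for $(GL_m, GL_n)$ is precisely a multiple chain $I_1 \preceq \cdots \preceq I_s$ in $\mathcal{L}_m^n$; the length constraint $\ell(F) \leq k$ translates exactly into the condition $|I_c| \leq k$ for all $c$, i.e. that all the $I_c$ lie in the sublattice $\mathcal{L}_{m,k}^n$. Under the identification of the monomial $\prod_r z_{I_r}$ with the tableau built from the columns $I_r$ (the convention fixed just before the Lemma), the standard monomials of $\mathcal{H}_{m,k}^n$ are in bijection with the elements of $\mathcal{T}_{m,k}^n$, which gives the claimed basis.

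For part (2), the grading is the natural multigrading on $\mathcal{H}(L)$ obtained by assigning to each generator $z_{\gamma}$ a degree that records its shape contribution. Concretely, I would define the degree of $z_I$ for $I \in \mathcal{L}_{m,k}^n$ to be the pair of Young-diagram data it determines, and then check that the Hibi relations $z_\alpha z_\beta - z_{\alpha \wedge \beta} z_{\alpha \vee \beta}$ are homogeneous for this assignment. This homogeneity is the point where one must be a little careful: it rests on the fact that $\alpha \wedge \beta$ and $\alpha \vee \beta$ together carry the same total shape data as $\alpha$ and $\beta$. Using the explicit meet and join formulas recorded in \S\ref{Sec_taborder}, this is a short verification---the multiset of entry-values is preserved under the $\min/\max$ operation on column contents---so the relations are homogeneous and the grading descends to $\mathcal{H}_{m,k}^n$. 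Once homogeneity is established, the $(F,D)$-graded component is spanned by those standard monomials whose total shape is $F/D$, which is exactly $\mathcal{T}_m^n(F,D)$, and the disjoint-union description of $\mathcal{T}_{m,k}^n$ over $\Lambda_{k,n}$ shows the grading set is $\Lambda_{k,n}$.

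The main obstacle, such as it is, lies in matching conventions rather than in any deep argument: one must confirm that the combinatorial notion of shape $sh_n(\mathsf{t}) = F/D$ in Definition \ref{def_GL-stm}, where $D = (d_1,\dots,d_n)$ records the multiplicities of the entries $1,\dots,n$, is additive under concatenation of columns and thus defines a genuine $\Lambda_{k,n}$-grading compatible with the multiplication of $\mathcal{H}_{m,k}^n$. I would make this precise by checking that when two standard tableaux are multiplied (their columns merged and re-sorted by length), the outer shape $F$ and inner shape $D$ add componentwise. Given the explicit formulas already in hand, this is routine, and I do not anticipate needing more than Hibi's theorem plus these bookkeeping checks; the content of the Lemma is really the translation of a known structural result into the branching-theoretic language of the paper.
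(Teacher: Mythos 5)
Your proposal is correct and takes essentially the same route as the paper: the paper's own proof consists of noting that standard tableaux are exactly multiple chains in $\mathcal{L}_{m,k}^{n}$ and then invoking the general standard monomial basis property of Hibi algebras \cite{Hi87, How05}. Your extra verifications---that $\ell(F)\leq k$ corresponds to all columns lying in $\mathcal{L}_{m,k}^{n}$, and that the relations $z_{\alpha}z_{\beta}-z_{\alpha\wedge\beta}z_{\alpha\vee\beta}$ are homogeneous for the $(F,D)$-grading because meet and join preserve both the multiset of column lengths and the multiset of entries---are precisely the bookkeeping the paper leaves implicit.
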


It is shown in \cite[Corollary 3.14]{Ki08} that the Hibi algebra over 
$\mathcal{L}_{m}$ is isomorphic to the semigroup algebra of GT patterns
defined on $\Gamma _{m}$. This fact combined with the above Lemma leads us
to study the Hibi algebra $\mathcal{H}_{m,k}^{n}$ over 
$\mathcal{L}_{m,k}^{n} $ in terms of the semigroup algebra 
$\mathbb{C}[\mathcal{P}_{m,k}^{n}]$ of the affine semigroup $\mathcal{P}_{m,k}^{n}$
given in (\ref{GT-Patterns}).

\medskip

Note that for $\mathsf{p}_1$ and $\mathsf{p}_2 \in \mathcal{P}_{m,k}^{n}$ of
types ${F_1}/{D_1}$ and ${F_2}/{D_2}$ respectively, the type of 
$(\mathsf{p}_1 + \mathsf{p}_2)$ is $(F_1 + F_2)/(D_1 + D_2)$, 
and therefore $\mathbb{C}[\mathcal{P}_{m,k}^{n}]$ is graded by pairs of shapes
\begin{equation*}
\mathbb{C}[\mathcal{P}_{m,k}^{n}]=\bigoplus_{(F,D)\in \Lambda _{k,n}}
\mathbb{C}[\mathcal{P}_{m}^{n}]_{(F,D)}
\end{equation*}
where $\mathbb{C}[\mathcal{P}_{m}^{n}]_{(F,D)}$ is the space spanned by 
$\mathcal{P}_{m}^{n}(F,D)$.

\begin{proposition}\label{DL-GT}
\begin{enumerate}
\item The Hibi algebra $\mathcal{H}_{m,k}^{n}$ over 
$\mathcal{L}_{m,k}^{n}$ is isomorphic to the semigroup algebra 
$\mathbb{C}[\mathcal{P}_{m,k}^{n}]$ of the GT patterns for $({GL}_{m},{GL}_{n})$.

\item The set $\mathcal{P}_{m}^{n}(F,D)$ of GT patterns for 
$({GL}_{m},{GL}_{n})$ of type $F/D$ is a $\mathbb{C}$-basis for the 
$(F,D)$-graded component $\mathbb{C}[\mathcal{P}_{m}^{n}]_{(F,D)}$.
\end{enumerate}
\end{proposition}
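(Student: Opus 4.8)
The plan is to establish both parts simultaneously by constructing an explicit algebra isomorphism and tracking how it interacts with the gradings. The key observation is that everything has already been set up for us: Lemma~\ref{Hibi-basis} tells us the standard tableaux $\mathcal{T}_{m,k}^{n}$ form a basis for $\mathcal{H}_{m,k}^{n}$, Proposition~\ref{bijection YT-GT} gives a bijection $\mathsf{t} \mapsto \mathsf{p}_{\mathsf{t}}$ between standard tableaux and GT patterns respecting type, and the cited result \cite[Corollary 3.14]{Ki08} already identifies the Hibi algebra over the full lattice $\mathcal{L}_{m}$ with the semigroup algebra of GT patterns on $\Gamma_{m}$. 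First I would define the candidate map $\Phi\colon \mathcal{H}_{m,k}^{n} \to \mathbb{C}[\mathcal{P}_{m,k}^{n}]$ on generators by sending each lattice element $z_{I}$ to the monomial corresponding to the characteristic function $\mathsf{p}_{I}$ on the order increasing set $A_{I}$ from~(\ref{order increasing}). Since the order preserving characteristic functions generate the affine semigroup $\mathcal{P}_{m,k}^{n}$, this determines an algebra homomorphism, and the correspondence~(\ref{procedure2}) shows a product of generators $\prod_{r} z_{I_{r}}$ maps to the monomial of $\sum_{r}\mathsf{p}_{I_{r}}$.

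Next I would verify that $\Phi$ is well-defined, i.e.\ that it kills the Hibi relations. For an incomparable pair $(I,J)$, one checks that the multiset union of the order increasing sets $\{A_{I}, A_{J}\}$ equals $\{A_{I \wedge J}, A_{I \vee J}\}$, so that $\mathsf{p}_{I} + \mathsf{p}_{J} = \mathsf{p}_{I \wedge J} + \mathsf{p}_{I \vee J}$ as functions on $\Gamma_{m,k}^{n}$; consequently both monomials $z_{I}z_{J}$ and $z_{I \wedge J}z_{I \vee J}$ map to the same element of the semigroup algebra, and the generator $z_{I}z_{J} - z_{I \wedge J}z_{I \vee J}$ of the Hibi ideal lies in $\ker\Phi$. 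This is exactly the lattice-theoretic content underlying the distributive structure recorded in \S\ref{Sec_taborder}, and it is the same computation that proves \cite[Corollary 3.14]{Ki08} in the full-lattice case; restricting to the subposet $\mathcal{L}_{m,k}^{n}$ changes nothing because meets and joins computed in the sublattice agree with those in $\mathcal{L}_{m}$.

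With $\Phi$ well-defined, bijectivity follows from a basis count rather than a direct inverse construction. By Lemma~\ref{Hibi-basis}(1) the standard tableaux $\mathcal{T}_{m,k}^{n}$ form a basis of the domain, and under $\Phi$ a standard tableau $\mathsf{t}$ maps to the monomial of $\mathsf{p}_{\mathsf{t}}$, which is precisely the element of $\mathcal{P}_{m,k}^{n}$ assigned to $\mathsf{t}$ by the bijection of Proposition~\ref{bijection YT-GT}. Since the GT patterns $\mathcal{P}_{m,k}^{n}$ form the standard monomial basis of the semigroup algebra $\mathbb{C}[\mathcal{P}_{m,k}^{n}]$, the map $\Phi$ sends a basis bijectively onto a basis, hence is an isomorphism. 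This proves part~(1). Part~(2) then falls out by matching the two gradings: Lemma~\ref{Hibi-basis}(2) grades $\mathcal{H}_{m,k}^{n}$ by $\Lambda_{k,n}$ with $(F,D)$-component spanned by $\mathcal{T}_{m}^{n}(F,D)$, while the type-additivity of GT patterns noted just before the statement grades $\mathbb{C}[\mathcal{P}_{m,k}^{n}]$ by $\Lambda_{k,n}$ as well; because the type-preserving bijection of Proposition~\ref{bijection YT-GT} identifies $\mathcal{T}_{m}^{n}(F,D)$ with $\mathcal{P}_{m}^{n}(F,D)$, the isomorphism $\Phi$ is graded, and $\mathcal{P}_{m}^{n}(F,D)$ is therefore a $\mathbb{C}$-basis for the $(F,D)$-component.

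The main obstacle, such as it is, is the well-definedness check that the Hibi relations are annihilated, specifically the set-theoretic identity $\{A_{I}, A_{J}\} = \{A_{I \wedge J}, A_{I \vee J}\}$ for incomparable $I, J$. I expect this to be routine given the explicit formulas for meet and join in \S\ref{Sec_taborder} and the explicit description of $A_{I}$ in~(\ref{order increasing}), and in any case it is the restriction to a sublattice of the identity already established in \cite{Ki08}; the remainder of the argument is bookkeeping that leverages the basis results proved earlier in the excerpt.
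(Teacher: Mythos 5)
Your proposal is correct and follows essentially the same route as the paper: the paper's proof also builds the isomorphism from Proposition~\ref{joinirred2} and the correspondence~(\ref{procedure2}), deferring the well-definedness details to \cite{How05, Ki08}, and deduces part~(2) from Proposition~\ref{bijection YT-GT} together with Lemma~\ref{Hibi-basis}; you have simply written out the details the paper cites away. One small wording caution: for incomparable $I,J$ the pair $\{A_{I},A_{J}\}$ does not equal $\{A_{I\wedge J},A_{I\vee J}\}$ as a multiset of sets --- rather $A_{I\wedge J}=A_{I}\cup A_{J}$ and $A_{I\vee J}=A_{I}\cap A_{J}$, which is what yields the identity $\mathsf{p}_{I}+\mathsf{p}_{J}=\mathsf{p}_{I\wedge J}+\mathsf{p}_{I\vee J}$ that your argument actually uses.
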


\begin{proof}
For the first statement, by Proposition \ref{joinirred2}, there is a bijection
between the set of elements $x_{I}\in \mathcal{H}_{m,k}^{n}$ for 
$I\in \mathcal{L}_{m,k}^{n}$ and characteristic functions over order increasing
subsets of $\Gamma _{m,k}^{n}$. This gives an algebra isomorphism from 
$\mathcal{H}_{m,k}^{n}$ to $\mathbb{C}[\mathcal{P}_{m,k}^{n}]$ via 
(\ref{procedure2}). See \cite{How05, Ki08} for further details. The second
statement follows from Proposition \ref{bijection YT-GT} and the above Lemma.
\end{proof}

\medskip


\section{Branching Algebras for $({GL}_{m},{GL}_{n})$}


In this section, our goal is to construct an algebra encoding branching
rules for $({GL}_{m},{GL}_{n})$ and study its toric degeneration. For later
use, we will construct a family of algebras parametrized by the length $k$ of
highest weights for ${GL}_{m}$.

\subsection{}

Recall that the set of Young diagrams $F$ with $\ell (F)\leq m$ can be used
as a labeling system of irreducible polynomial representations of ${GL}_{m}$
by identifying dominant weights $(f_{1} \geq \cdots \geq f_{m})\in \mathbb{Z}_{\geq 0}^{m}$
of ${GL}_{m}$ with Young diagrams (cf. \cite[\S 3.1.4]{GW09}). 
We let $\rho _{m}^{F}$ denote the irreducible 
polynomial representation of ${GL}_{m}$ labeled by Young diagram $F$. 

Then the branching algebra for $({GL}_{m},{GL}_{n})$ will be
graded by the set $\Lambda _{m,n}$ defined in \S \ref{grading_GL} and its
graded components will correspond to the multiplicity spaces 
$\mathrm{Hom}_{{GL}_{n}}(\rho _{n}^{D},\rho _{m}^{F})$ for 
$(F,D)\in \Lambda _{m,n}$.

\subsection{}

For Young diagrams $F=(f_{1},f_{2},\cdots )$ and $D=(d_{1},d_{2},\cdots )$,
we write $$F\sqsupseteq D$$ 
if $f_{r}\geq d_{r}\geq f_{r+1}$ for all $r$, and say  $F$ \textit{interlaces} $D$.

\begin{proposition}\label{number of Patterns}
\begin{enumerate}
\item For Young diagrams $F$ and $D$ with 
$\ell(F)\leq m$ and $\ell (D)\leq m-1$, the multiplicity of $\rho _{m-1}^{D}$ in 
$\rho _{m}^{F}$ is $1$ if $F\sqsupseteq D$, and $0$ otherwise.

\item The number of GT patterns in $\mathcal{P}_{m}^{n}(F,D)$ is equal to the
multiplicity $m(\rho _{n}^{D},\rho _{m}^{F})$ of $\rho _{n}^{D}$ in 
$\rho_{m}^{F}$.
\end{enumerate}
\end{proposition}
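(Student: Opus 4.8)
The plan is to prove the two statements in tandem, using the classical $GL_m \downarrow GL_{m-1}$ branching rule as the engine and the combinatorial structure of GT patterns as the bookkeeping device. For part (1), I would recall that the restriction of $\rho_m^F$ to $GL_{m-1}$ decomposes multiplicity-freely, with $\rho_{m-1}^D$ appearing exactly when $F$ and $D$ satisfy the interlacing condition $F \sqsupseteq D$, i.e. $f_r \geq d_r \geq f_{r+1}$ for all $r$. This is the standard single-step branching rule for the general linear groups and can be cited directly (cf. \cite[\S 8.1]{GW09}); it is the base case from which the full GT pattern count is built. The key observation connecting this to our setup is that the interlacing condition $F \sqsupseteq D$ is precisely the defining inequality $x_j^{(i+1)} \geq x_j^{(i)} \geq x_{j+1}^{(i+1)}$ of the GT poset $\Gamma_m$ applied to two adjacent rows.

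For part (2), the strategy is an inductive telescoping through the intermediate groups $GL_n \subset GL_{n+1} \subset \cdots \subset GL_m$. First I would observe that by iterating the branching in part (1) along this chain, the multiplicity $m(\rho_n^D, \rho_m^F)$ counts the number of sequences of Young diagrams
\begin{equation*}
D = x^{(n)} \sqsubseteq x^{(n+1)} \sqsubseteq \cdots \sqsubseteq x^{(m)} = F
\end{equation*}
where each consecutive pair interlaces and $\ell(x^{(i)}) \leq i$. Next I would identify such a sequence with an assignment of the entry $\mathsf{p}(x_j^{(i)})$ equal to the $j$-th part of the $i$-th Young diagram in the chain, for $n \leq i \leq m$ and $1 \leq j \leq i$. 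The interlacing conditions between consecutive rows translate exactly into the order-preserving inequalities defining $\Gamma_m^n$, so this assignment is nothing but an order-preserving map $\mathsf{p}: \Gamma_m^n \to \mathbb{Z}_{\geq 0}$ whose $m$-th row is $F$ and whose $n$-th row is $D$ — that is, a GT pattern of type $F/D$ in the sense of the definition preceding \eqref{GT-Patterns}. This establishes a bijection between the chains counted by the iterated branching rule and the set $\mathcal{P}_m^n(F,D)$, which gives the desired equality of cardinalities.

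The main obstacle, such as it is, lies in verifying carefully that the length constraints match up: a GT pattern of type $F/D$ lives on all of $\Gamma_m^n$, and one must check that the weakly decreasing and interlacing conditions automatically force the correct number of nonzero entries in each row so that $x^{(i)}$ is a genuine dominant weight for $GL_i$ of length at most $i$. This is where the monotonicity built into the poset $\Gamma_m$ does the work: the inequalities $x_{j}^{(i)} \geq x_{j+1}^{(i+1)} \geq 0$ guarantee that entries stay non-negative and weakly decreasing along each row, so no pattern violates the dominance requirement. Once this compatibility is confirmed, the bijection is routine, and I would note that this identification is exactly the classical conversion between semistandard tableaux and GT patterns already recorded in the remark following Proposition \ref{bijection YT-GT}, so part (2) can also be viewed as a representation-theoretic reinterpretation of the purely combinatorial bijection established earlier in the excerpt.
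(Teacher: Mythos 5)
Your proposal is correct and follows essentially the same route as the paper: cite the single-step interlacing branching rule (Pieri's rule, \cite[\S 8.1.1]{GW09}) for part (1), then iterate it along the chain ${GL}_n \subset {GL}_{n+1} \subset \cdots \subset {GL}_m$ and identify the resulting interlacing sequences of Young diagrams with the rows of GT patterns in $\mathcal{P}_m^n(F,D)$. Your additional verification that the poset inequalities of $\Gamma_m^n$ automatically enforce dominance and the length constraints is a point the paper leaves implicit, but it is the same argument.
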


\begin{proof}
The first statement is known as the Pieri's rule (see, e.g., \cite[\S 8.1.1]{GW09}). 
The second statement can be obtained by applying the Pieri's
rule for $({GL}_{k+1},{GL}_{k})$ successively for $n\leq k\leq m-1$. The
multiplicity of $\rho _{n}^{D}$ in $\rho _{m}^{F}$ is equal to the number of
the set $\left\{ (E_{m-1},E_{m-2},\cdots ,E_{n+1})\right\} $ such that
\begin{equation*}
F\sqsupseteq E_{m-1}\sqsupseteq E_{m-2}\sqsupseteq \cdots \sqsupseteq
E_{n+1}\sqsupseteq D.
\end{equation*}
Note that by setting $F=E_{m}$ and $D=E_{n}$, $E_{i}$ represents the $i$-th
row of a GT pattern in $\mathcal{P}_{m}^{n}(F,D)$ for $n\leq i\leq m$.
\end{proof}

From Proposition \ref{number of Patterns} and 
Proposition \ref{bijection YT-GT}, we have

\begin{corollary}
\label{Tabnumber}For $(F,D)\in \Lambda _{m,n}$, the branching multiplicity 
$m(\rho _{n}^{D},\rho _{m}^{F})$ is equal to the number of standard tableaux
for $({GL}_{{m}},{GL}_{{n}})$ whose shapes are $F/D$.
\end{corollary}

\subsection{}

To construct a family of branching algebras for $({GL}_{m},{GL}_{n})$
parameterized by the length $k$, let us review a polynomial model for the
flag algebra. We assume $m\geq k$ and let ${GL}_{m}\times {GL}_{k}$ be
acting on the space $\mathsf{M}_{m,k} \cong \mathbb{C}^m \otimes \mathbb{C}^k$ 
of $m\times k$ complex matrices by 
\begin{equation}
(g_{1},g_{2})\cdot Q=(g_{1}^{t})^{-1}Q g_{2}^{-1}  \label{doubleaction}
\end{equation}
for $g_{1}\in {GL}_{m}$, $g_{2}\in {GL}_{k}$, and $Q\in \mathsf{M}_{m,k}$.
Then under the ${GL}_{m}\times {GL}_{k}$ action, the coordinate ring 
$\mathbb{C}[\mathsf{M}_{m,k}]$ of $\mathsf{M}_{m,k}$ has the following
decomposition:
\begin{equation*}
\mathbb{C}[\mathsf{M}_{m,k}]=\sum\limits_{\ell (F)\leq k}
\rho_{m}^{F}\otimes \rho _{k}^{F}
\end{equation*}
where the summation is over $F$ with length not more than $k$.
This result is known as ${GL}_{m}$-${GL}_{k}$ \textit{duality} 
(e.g., \cite{GW09, How95}). If $U_{k}$ is the subgroup of ${GL}_{k}$ 
consisting of upper triangular matrices with $1$'s on the diagonal, then by 
taking $U_{k}\cong 1\times U_{k}$ invariants, we have
\begin{equation}
\mathbb{C}[\mathsf{M}_{m,k}]^{U_{k}}=\sum\limits_{\ell (F)\leq k}\rho
_{m}^{F}\otimes (\rho _{k}^{F})^{U_{k}}  \notag
\end{equation}

\subsection{}

This representation decomposition turns out to be compatible with the
multiplicative structure of the algebra. Since the diagonal subgroup $A_{k}$
of ${GL}_{k}$ normalizes $U_{k}$, $\mathbb{C}[\mathsf{M}_{m,k}]^{U_{k}}$ is
stable under the action of $A_{k}$. Note that by highest weight theory
(e.g., \cite[\S 3.2.1 and \S 12.1.3]{GW09}), $(\rho _{k}^{F})^{U_{k}}$ is the one
dimensional space spanned by a highest weight vector of $\rho _{k}^{F}$, and 
$A_{k}$ acts on $(\rho _{k}^{F})^{U_{k}}$ by the character
\begin{equation*}
\phi_F(diag(a_{1},\cdots ,a_{k})) = a_{1}^{f_{1}}\cdots a_{k}^{f_{k}}
\end{equation*}
given by Young diagram $F=(f_{1},f_{2},\cdots ,f_{k})$. 
Thus, $\rho _{m}^{F}\simeq \rho _{m}^{F}\otimes (\rho_{k}^{F})^{U_{k}}$ is the space 
of $A_{k}$-eigenvectors of weight $\phi_F$ in $\mathbb{C}[\mathsf{M}_{m,k}]^{U_{k}}$ and 
the $\mathbb{C}$-algebra $\mathbb{C}[\mathsf{M}_{m,k}]^{U_{k}}$ is graded by the 
semigroup $\hat{A}_{k}^{+}$ of dominant polynomial weights for ${GL}_{k}$, or equivalently 
the subsemigroup $\hat{A}_{k}^{+}\subset \hat{A}_{m}^{+}$ of dominant weights for ${GL}_{m}$: 
\begin{eqnarray}
\mathbb{C}[\mathsf{M}_{m,k}]^{U_{k}} &=&\sum\limits_{\ell (F)\leq k}
\rho_{k}^{F}  \label{grading1} \\
\rho _{m}^{F_1}\cdot \rho _{m}^{F_2} &\subseteq &\rho_{m}^{F_1 + F_2}  \notag
\end{eqnarray}
where we identify $(r_1, \cdots, r_k) \in \mathbb{Z}_{\geq 0}^k$ with 
$(r_1, \cdots, r_k, 0, \cdots, 0) \in \mathbb{Z}_{\geq 0}^m$.
\subsection{}

A finite presentation of $\mathbb{C}[\mathsf{M}_{m,k}]^{U_{k}}$ 
in terms of generators and relations is well known---all the $U_{k}$-invariant minors on 
$\mathsf{M}_{m,k}$ form a generating set and they satisfy the Pl\"{u}cker relations. 
To explain more details, let us consider a subposet $\mathcal{L}_{m,k}=\mathcal{L}_{m,k}^1$ 
of $\mathcal{L}_{m}$ consisting of elements $I=[i_{1},i_{2},\cdots ,i_{r}]$ such that 
$|I| \leq k$ (cf. Definition \ref{columnDL}). 

For each $Q \in \mathsf{M}_{m,k}$, we let $\delta _{I}(Q)$ denote the determinant of
the submatrix of $Q=(t_{a,b})$ obtained by taking the
$i_{1},i_{2},\cdots ,i_{r}$-th rows and the $1,2,\cdots ,r$-th columns:
\begin{equation}
\delta _{I}(Q)=\det 
\begin{bmatrix}
t_{i_{1}1} & t_{i_{1}2} & \cdots  & t_{i_{1}r} \\ 
t_{i_{2}1} & t_{i_{2}2} & \cdots  & t_{i_{2}r} \\ 
\vdots  & \vdots  & \ddots  & \vdots  \\ 
t_{i_{r}1} & t_{i_{r}2} & \cdots  & t_{i_{r}r}%
\end{bmatrix}
\label{determinant}
\end{equation}
\begin{definition}
A product $\delta _{I_{1}}\delta _{I_{2}}\cdots \delta _{I_{r}}$ is
called a \textit{standard monomial} (or \textit{$GL_m$ standard monomial}), 
if its indexes form a multiple
chain $\mathsf{t}=(I_{1}\preceq I_{2}\preceq \cdots \preceq I_{r})$ in 
$\mathcal{L}_{m,k}$ and write
\begin{equation*}
\Delta _{\mathsf{t}}=\delta _{I_{1}}\delta _{I_{2}}\cdots \delta _{I_{r}}.
\end{equation*}
\end{definition}

Then we define the \textit{shape} of a standard monomial $\Delta _{\mathsf{t}}$
to be the shape of $\mathsf{t}$, i.e., $(|I_1|, |I_2|, \cdots, |I_r|)^t$.

\begin{proposition}[{\protect\cite[pp.233,236]{GL01}}]\label{straightening1}
\begin{enumerate}
\item For $I,J\in \mathcal{L}_{m,k}$, the product $\delta _{I}\delta _{J}
\in \mathbb{C}[\mathsf{M}_{m,k}]^{U_{k}}$ can
be uniquely expressed as a linear combination of standard monomials
\begin{equation}
\delta _{I}\delta _{J}=\sum_{r}c_{r}\delta _{S_{r}}\delta _{T_{r}}
\label{straightening}
\end{equation}
where, for each $r$ with $c_r \ne 0$, $S_{r}\preceq T_{r}$ in $\mathcal{L}_{m,k}$ 
and  $S_{r}\dot{\cup} T_{r} = I \dot{\cup} J$ as sets.

\item On the right hand side, $\delta _{I\wedge J}\delta _{I\vee J}$ appears
with coefficient $1$, and $S_{r}\preceq I\wedge J$ and $I\vee J\preceq
T_{r} $ for all $r$ with $c_r \ne 0$. Moreover, for each $(S_{r},T_{r})\neq (I\wedge J,
I \vee J)$, let $h$ be the smallest integer such that the sum $s$ of the $h$-th
entries of $S_{r}$ and $T_{r}$ is different from the sum $s_{0}$ of the 
$h$-th entries of $I$ and $J$. Then $s>s_{0}$.
\end{enumerate}
\end{proposition}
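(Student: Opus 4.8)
The plan is to establish this as the standard straightening law for the flag minors (\ref{determinant}), carried out in four stages: record the basic quadratic relations the $\delta_{I}$ satisfy; use them for a single straightening step while tracking the coefficient and the dominance bound; iterate to obtain spanning by standard monomials (the existence in part (1)); and finally deduce uniqueness from a dimension count.

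First I would write down the fundamental quadratic relations among the minors in (\ref{determinant}). Since $\delta_{I}$ is the maximal minor on the first $|I|$ columns of $Q$ along the rows indexed by $I$, these minors are Pl\"ucker coordinates and satisfy the Grassmann--Pl\"ucker relations; concretely, such a relation arises by Laplace-expanding a single determinant carrying a repeated block of rows (a van der Waerden syzygy), antisymmetrizing a chosen set of rows across the two minors. Every term of the resulting identity involves the same total row multiset, because the relation merely redistributes a fixed collection of rows between the two factors, and this already yields the multiset condition $S_{r}\dot{\cup}T_{r}=I\dot{\cup}J$ asserted in part (1).

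The heart of the argument is the single straightening step underlying part (2). Given an incomparable pair $I,J$, the relevant Grassmann--Pl\"ucker relation sorts their combined rows: solving it for $\delta_{I}\delta_{J}$ expresses the product as a signed sum of $\delta_{S_{r}}\delta_{T_{r}}$ over redistributions of the fixed row multiset into comparable pairs $S_{r}\preceq T_{r}$. Two facts must then be checked by direct inspection of these redistributions. First, the \emph{sorted} redistribution, which takes the componentwise minimum and maximum of the two sorted index sequences and hence equals $(I\wedge J,I\vee J)$, occurs with coefficient $+1$; this pair has exactly the same entry-sum profile as $(I,J)$, since $\min(i_{h},j_{h})+\max(i_{h},j_{h})=i_{h}+j_{h}$. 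Second, every other redistribution $(S_{r},T_{r})$ occurring with nonzero coefficient strictly dominates $(I,J)$: at the first position $h$ where its entry-sum $s$ differs from $s_{0}$ one has $s>s_{0}$. This inequality is the combinatorial core; it should follow because any deviation from the sorted redistribution must, at its first occurrence, push a larger row index into the shorter set, strictly raising that entry-sum, and the same comparison simultaneously forces $S_{r}\preceq I\wedge J$ and $I\vee J\preceq T_{r}$.

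With this step in hand, the existence in part (1) follows by induction: each application replaces a non-chain factor by the chain pair $\delta_{I\wedge J}\delta_{I\vee J}$ together with factors that strictly dominate it in entry-sum profile. Since these profiles are bounded above (entries lie in $\{1,\dots,m\}$ and lengths are at most $k$), a Noetherian induction---on the profile, with the number of incomparable factors as a secondary measure---terminates in a linear combination of standard monomials. For uniqueness I would invoke a dimension count rather than confluence: by the ${GL}_{m}$--${GL}_{k}$ duality (\ref{grading1}) the $F$-graded component of $\mathbb{C}[\mathsf{M}_{m,k}]^{U_{k}}$ is a copy of $\rho_{m}^{F}$, while the standard monomials of shape $F$ are indexed by the semistandard tableaux of shape $F$ with entries in $\{1,\dots,m\}$ (as in \S\ref{tab-chain}), which number $\dim\rho_{m}^{F}$; since these monomials already span, they must form a basis, forcing the expansion (\ref{straightening}) to be unique. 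I expect the main obstacle to be precisely the bookkeeping in the straightening step---identifying which redistributions appear, checking that $\delta_{I\wedge J}\delta_{I\vee J}$ carries coefficient $1$, and proving the strict inequality $s>s_{0}$ uniformly for all remaining terms---whereas the rest reduces to a formal determinantal identity together with standard Noetherian and dimension arguments.
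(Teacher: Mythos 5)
A preliminary remark: the paper never proves this Proposition --- it is imported verbatim from \cite[pp.~233, 236]{GL01} --- so the only meaningful comparison is with the classical straightening argument in that reference, which is the route you are sketching. Two of your steps are indeed sound: the multiset condition $S_{r}\dot{\cup}T_{r}=I\dot{\cup}J$ does follow because each syzygy merely redistributes a fixed collection of rows (equivalently, all terms must carry the same $GL_{m}$-torus weight as $\delta_{I}\delta_{J}$), and your uniqueness argument --- standard monomials of shape $F$ are counted by semistandard tableaux, hence by $\dim\rho_{m}^{F}$, which by ${GL}_{m}$--${GL}_{k}$ duality is the dimension of the corresponding graded component, so spanning forces linear independence --- is correct and standard.

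The gap is exactly at what you call the heart of the argument: a single Grassmann--Pl\"ucker (Garnir) relation at the first violation does not do what you assert. Take $I=[1,5,6]$ and $J=[2,3,4]$, so $I\wedge J=[1,3,4]$ and $I\vee J=[2,5,6]$. The first violation of $I\preceq J$ occurs at position $2$, and the corresponding syzygy shuffles the rows $\{5,6\}$ of $I$ against the rows $\{2,3\}$ of $J$, keeping $1$ and $4$ fixed in their factors; its terms are $\delta_{[135]}\delta_{[246]}$, $\delta_{[136]}\delta_{[245]}$, $\delta_{[125]}\delta_{[346]}$, $\delta_{[126]}\delta_{[345]}$, $\delta_{[123]}\delta_{[456]}$, all with coefficients $\pm1$. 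Three of your claims fail simultaneously on this example. First, the terms are not all comparable pairs: $([1,3,6],[2,4,5])$ is incomparable, so one step does not land in standard monomials. Second, the meet--join pair $([1,3,4],[2,5,6])$ does not appear at all, because the row $4$ lies outside the shuffled window; its coefficient therefore cannot be read off ``by inspection'' of one relation, since that pair only emerges after further straightening of the incomparable intermediate terms. Third, and most damaging, the term $([1,3,6],[2,4,5])$ has entry-sum profile $(3,7,11)$ while $(I,J)$ has $(3,8,10)$: at the first difference the sum strictly \emph{decreases}. Hence the entry-sum profile is not monotone along straightening steps, so your Noetherian induction on that profile breaks down, and neither the dominance claim nor the coefficient-$1$ claim can be propagated to the final expression by ``profiles only increase.'' A correct proof must run termination on an order that actually is monotone (for instance, in every Garnir step the left column strictly decreases entrywise, since some smaller row from the right column always moves left), and must then establish part (2) --- the bounds $S_{r}\preceq I\wedge J$ and $I\vee J\preceq T_{r}$, the strict inequality $s>s_{0}$, and the coefficient $1$ --- by a separate induction through the iteration. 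That is precisely the nontrivial content of the cited pages of \cite{GL01}, and it is the part your proposal assumes rather than proves.
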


By applying the straightening relations (\ref{straightening}), we can find a 
$\mathbb{C}$-basis for 
$\mathbb{C}[\mathsf{M}_{m,k}]^{U_{k}}$. The following is well known. See, for
example, \cite{BH93,DEP82,GL01,Hod43}. For this particular form, see 
\cite[Theorem 4.5, Remark 4.6]{Ki08}.

\begin{proposition}
Standard monomials $\Delta _{\mathsf{t}}$ associated with multiple chains 
$\mathsf{t}$ in $\mathcal{L}_{m,k}$ form a $\mathbb{C}$-basis for 
$\mathbb{C}[\mathsf{M}_{m,k}]^{U_{k}}$. More precisely, standard monomials 
$\Delta _{\mathsf{t}}$ with $sh(\mathsf{t})=F$ form a weight basis for the 
${GL}_{m}$ irreducible representation 
$\rho _{m}^{F}\subset \mathbb{C}[\mathsf{M}_{m,k}]^{U_{k}}$ with highest weight $F$.
\end{proposition}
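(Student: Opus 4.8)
The plan is to combine the straightening law (Proposition \ref{straightening1}) with a dimension count supplied by the $GL_m$-$GL_k$ duality in \eqref{grading1}. The statement has two parts: that the standard monomials $\Delta_{\mathsf{t}}$ over multiple chains in $\mathcal{L}_{m,k}$ form a $\mathbb{C}$-basis of $\mathbb{C}[\mathsf{M}_{m,k}]^{U_k}$, and that those with fixed shape $sh(\mathsf{t})=F$ give a weight basis of the irreducible summand $\rho_m^F$. First I would establish the \emph{spanning} property. Since all the $U_k$-invariant minors $\delta_I$ with $I\in\mathcal{L}_{m,k}$ generate $\mathbb{C}[\mathsf{M}_{m,k}]^{U_k}$ as an algebra (the classical first fundamental theorem, cited just before the statement), any element is a polynomial in the $\delta_I$, i.e.\ a linear combination of arbitrary monomials $\delta_{I_1}\cdots\delta_{I_r}$. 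Applying the two-term straightening relation \eqref{straightening} repeatedly to any non-standard adjacent pair rewrites such a monomial as a combination of standard monomials; the termination of this rewriting is exactly where the second half of Proposition \ref{straightening1} is needed.

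The key technical step is \textbf{termination of straightening}, and this is the step I expect to be the main obstacle to make rigorous. I would introduce a monomial order (or a partial order on shapes/content vectors) and show it strictly decreases under each non-leading replacement. Concretely, part (2) of Proposition \ref{straightening1} tells us that in $\delta_I\delta_J=\delta_{I\wedge J}\delta_{I\vee J}+\sum_{r}c_r\delta_{S_r}\delta_{T_r}$, every non-leading term $(S_r,T_r)$ satisfies $S_r\preceq I\wedge J$ and $I\vee J\preceq T_r$, and at the first position $h$ where the column-sum changes it strictly increases, $s>s_0$. This gives a well-founded ordering: the leading term $\delta_{I\wedge J}\delta_{I\vee J}$ is already a chain and is ``more spread out,'' while each genuinely new term is strictly larger in the chosen statistic, so no infinite descent can occur. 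Packaging this as a Noetherian induction yields that every product of minors reduces to a finite $\mathbb{C}$-linear combination of standard monomials, establishing spanning.

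For \textbf{linear independence} I would pass to the graded picture rather than argue directly. By \eqref{grading1} the algebra $\mathbb{C}[\mathsf{M}_{m,k}]^{U_k}$ decomposes as $\sum_{\ell(F)\leq k}\rho_m^F$, so the $F$-graded component has dimension $\dim\rho_m^F$. The standard monomials respect this grading: a standard monomial of shape $F$ lands in the $\phi_F$-weight space for the $A_k$-action, hence in the summand $\rho_m^F$. It therefore suffices to check, for each fixed $F$, that the number of standard monomials of shape $F$ equals $\dim\rho_m^F$ and that they are independent. The count is purely combinatorial: standard monomials of shape $F$ are multiple chains $I_1\preceq\cdots\preceq I_r$ in $\mathcal{L}_{m,k}$ with $(|I_1|,\dots,|I_r|)^t=F$, which by the discussion in \S\ref{tab-chain} are exactly the semistandard Young tableaux of shape $F$ with entries in $\{1,\dots,m\}$, a set of cardinality $\dim\rho_m^F$ by the Weyl character/GT-pattern count. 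Since the spanning set within each graded component already has exactly the right cardinality and spans a space of that dimension, it must be a basis, giving independence for free.

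Finally I would assemble the two parts. Spanning plus the matching dimension count yields that $\{\Delta_{\mathsf{t}}\}$ is a $\mathbb{C}$-basis of the whole algebra, and the grading argument simultaneously shows that those of shape $F$ form a basis of the summand $\rho_m^F$. To see that this is a \emph{weight} basis with highest weight $F$: the $A_k$-weight of $\Delta_{\mathsf{t}}$ is determined by its content (the multiset of entries $\{1,\dots,m\}$), the unique standard monomial whose shape equals its content is the one built from the initial columns $[1,2,\dots,j]$, and this is precisely the highest weight vector of weight $F$ recorded by the character $\phi_F$ in \S\ref{grading_GL}. The remaining bookkeeping---matching $A_k$-weights to contents and confirming $F$ is the highest weight under the dominance order---is the routine part and can be stated without detailed computation, invoking the identification of standard tableaux with Gelfand-Tsetlin patterns (Proposition \ref{bijection YT-GT}) already set up in Section 2.
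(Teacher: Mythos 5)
Your proposal is correct, but note that the paper itself offers no proof of this proposition: it is stated as well known, with pointers to \cite{BH93,DEP82,GL01,Hod43} and, for this exact formulation, to \cite[Theorem 4.5, Remark 4.6]{Ki08}. So your argument is a self-contained reconstruction of the standard proof rather than a variant of something in the text. Its two pillars are sound: spanning follows from the generation of $\mathbb{C}[\mathsf{M}_{m,k}]^{U_{k}}$ by the minors $\delta_I$ plus iterated straightening (Proposition \ref{straightening1}), and independence follows because the chains of shape $F$ in $\mathcal{L}_{m,k}$ are counted by semistandard tableaux, hence by $\dim \rho_m^F$, which by $GL_m$-$GL_k$ duality (\ref{grading1}) is exactly the dimension of the $\phi_F$-eigenspace of $A_k$ in which those monomials live; a spanning set of cardinality equal to the dimension is a basis. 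This is also precisely the strategy the paper deploys one level up: the proof of Theorem \ref{BB-standard} runs the same spanning-plus-counting argument for $\mathcal{B}_{m,k}^{n}$, and the proof of Theorem \ref{Deformation} realizes your termination statistic concretely as $wt(I)=\sum_{r} i_r N^{m-r}$ with $N>2m$. Two points deserve tightening. First, you have the two tori switched: the $A_k$-weight of $\Delta_{\mathsf{t}}$ is determined by the \emph{shape} of $\mathsf{t}$ (it is the character $\phi_F$), while the \emph{content} determines the weight under the diagonal torus $A_m$ of $GL_m$, and it is the latter that makes $\{\Delta_{\mathsf{t}}\}$ a weight basis of $\rho_m^F$ and singles out the initial-segment tableau as the highest weight vector. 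Second, your termination argument needs the observation that straightening preserves content, i.e., $S_r \dot{\cup} T_r = I \dot{\cup} J$ in Proposition \ref{straightening1}(1): this confines the whole rewriting process to the finite set of monomials of a fixed content, and only then does the strict increase of your statistic (which rules out cycling) force the rewriting to stop.
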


\smallskip

We specify the following properties of the standard monomial expression of 
$\delta_I \delta_J$ for $I,J \in \mathcal{L}_{m,k}^n$ of length not more than $k$, 
which can be easily derived from the above Proposition.

\begin{corollary}\label{straightening123} 
Let $I$ and $J$ be incomparable elements in $\mathcal{L}_{m,k}^n$ with $|I| \geq |J|$. Consider the 
standard monomial expression of the product $\delta_I \delta_J$ given in (\ref{straightening}). Let 
us denote the standard tableau $S_r \preceq T_r$ by $\mathsf{t}_r$. Then, for each $r$ with nonzero
$c_r$,
\begin{enumerate}
\item the shape $sh_n(\mathsf{t}_r)$ is $F/D$ where $F=(|I|, |J|)^t$ and 
$D=(d_1, d_2, \cdots)$ where $d_h$ is the number of $h$'s in 
the disjoint union $I \dot{\cup} J$ for $1 \leq h \leq n$;

\item all the entries in the $h$-th row of ${\mathsf{t}_r}$ are bigger than or equal to 
$h$ for $1 \leq h \leq min(n,|I|)$;

\item if we denote the numbers of entries less than or equal to $h$ in $S_r$ and $T_r$ by 
$\alpha_h$ and $\beta_h$ respectively, then $\alpha_{h} + \beta_{h} \leq {2h}$ 
for $1 \leq h \leq min(n,|I|)$.
\end{enumerate}
\end{corollary}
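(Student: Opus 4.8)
The plan is to read off all three properties directly from the standard monomial expansion of Proposition \ref{straightening1}, together with the elementary fact that $S_r$ and $T_r$ are subsets of $\{1,\dots,m\}$, i.e. sets of \emph{distinct} positive integers. The only assertion carrying genuine content is the outer shape in (1); once the column lengths of $\mathsf{t}_r$ are pinned down, properties (2) and (3) are immediate and require nothing beyond the distinctness of entries.

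For (1), I would first locate the column lengths. By Proposition \ref{straightening1}(1) we have $S_r \dot{\cup} T_r = I \dot{\cup} J$, so $|S_r| + |T_r| = |I| + |J|$, and by part (2) of the same proposition $S_r \preceq I \wedge J$ and $I \vee J \preceq T_r$. Since $|I| \geq |J|$, a direct computation of the meet and join (as in \S \ref{Sec_taborder}) gives $|I \wedge J| = |I|$ and $|I \vee J| = |J|$, whence $|S_r| \geq |I|$ and $|T_r| \leq |J|$. These inequalities only sandwich the lengths and do not by themselves force equality, so the crux is to invoke homogeneity. From the action (\ref{doubleaction}) and the definition (\ref{determinant}), each $\delta_I$ is an $A_k$-weight vector whose weight has $|I|$ ones followed by zeros; hence $\delta_I \delta_J$ is homogeneous, of weight $(2,\dots,2,1,\dots,1,0,\dots)$ with $|J|$ twos and $|I|-|J|$ ones, in the $\hat{A}_k^+$-grading of $\mathbb{C}[\mathsf{M}_{m,k}]^{U_k}$ recorded in (\ref{grading1}). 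Because the straightening relation (\ref{straightening}) is an identity in this graded algebra, every term $\delta_{S_r} \delta_{T_r}$ must carry the same weight, which (using $|S_r| \geq |T_r|$) has $|T_r|$ twos followed by $|S_r|-|T_r|$ ones; this forces $|T_r| = |J|$ and $|S_r| = |I|$, so the outer shape is $F = (|I|,|J|)^t$. The inner shape is then immediate: the entries of $\mathsf{t}_r$ are exactly the multiset $S_r \dot{\cup} T_r = I \dot{\cup} J$, so the number of $h$'s in $\mathsf{t}_r$ equals the number of $h$'s in $I \dot{\cup} J$, namely $d_h$ for $1 \leq h \leq n$; thus $sh_n(\mathsf{t}_r) = F/D$ as claimed.

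Properties (2) and (3) then follow with no further use of the straightening. For (2), the $h$-th row of $\mathsf{t}_r$ consists of the $h$-th smallest elements of the columns $S_r$ and $T_r$ of length $\geq h$, and the $h$-th smallest element of any strictly increasing sequence of positive integers is $\geq h$; this holds for all $h$, in particular for $1 \leq h \leq \min(n,|I|)$. For (3), each of $S_r$ and $T_r$ meets $\{1,\dots,h\}$ in at most $h$ elements, so $\alpha_h \leq h$ and $\beta_h \leq h$, giving $\alpha_h + \beta_h \leq 2h$; one may also observe that by (1) the sum $\alpha_h + \beta_h$ equals the number of entries $\leq h$ in $I \dot{\cup} J$, a useful consistency check. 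The main obstacle in the whole argument is precisely the equality $|S_r| = |I|$, $|T_r| = |J|$ in (1): the combinatorics of Proposition \ref{straightening1} only bounds the lengths, and pinning them down genuinely relies on the shape- (equivalently weight-) preserving nature of the straightening law, which I extract from the $\hat{A}_k^+$-grading. Everything else is bookkeeping about subsets of $\{1,\dots,m\}$.
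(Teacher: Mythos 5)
Your proof is correct, and the interesting point is what it makes explicit: the paper offers no argument at all for this corollary, saying only that it ``can be easily derived'' from Proposition \ref{straightening1}, so the real comparison is about the one ingredient you identified as non-automatic. You are right that the combinatorial constraints of Proposition \ref{straightening1} alone (the multiset identity $S_r \dot{\cup} T_r = I \dot{\cup} J$ together with $S_r \preceq I \wedge J$ and $I \vee J \preceq T_r$) only give $|S_r| \geq |I|$ and $|T_r| \leq |J|$ and do not pin down the column lengths: for instance, with $I=[2,3]$ and $J=[1,4]$, the pair $S=[1,2,3]$, $T=[4]$ satisfies every condition stated there, including the ``moreover'' clause, yet does not occur in the actual Pl\"{u}cker relation $\delta_{[23]}\delta_{[14]} = \delta_{[13]}\delta_{[24]} - \delta_{[12]}\delta_{[34]}$. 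The fact that straightening preserves the pair of column lengths is left implicit in the paper---it is inherited from the multihomogeneity of the relations in the cited source \cite{GL01}, and is used without comment again in the proof of Theorem \ref{BB-standard} where it is asserted that $sh_n(S_r \preceq T_r)$ is the same for all $r$. Your $\hat{A}_k^{+}$-weight argument via (\ref{grading1}) supplies exactly this ingredient in a way that is self-contained in the paper's own setup: each $\delta_{S_r}\delta_{T_r}$ is an $A_k$-weight vector, and the uniqueness (equivalently, linear independence of standard monomials) in Proposition \ref{straightening1} rules out cross-weight cancellation, forcing $(|S_r|,|T_r|)=(|I|,|J|)$. The rest of your argument---the inner shape from the multiset identity, and parts (2) and (3) from the distinctness of entries within each column---is the same elementary bookkeeping the paper intends, so your write-up is best viewed as the paper's derivation with its one genuine gap filled.
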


\begin{example}
For $I=[1,2,5,6]$ and $J=[1,3,4]$ from $\mathcal{L}_{6,4}^2$, we have
\begin{eqnarray*}
\delta_{[1256]} \delta_{[134]} &=& \delta_{[1246]} \delta_{[135]}  - \delta_{[1236]} \delta_{[145]}\\
 && + \delta_{[1235]} \delta_{[146]} - \delta_{[1245]} \delta_{[136]} - \delta_{[1234]} \delta_{[156]} 
\end{eqnarray*}
Note that $sh_n(\mathsf{t}_r)=(2,2,2,1)/(2,1)$ for all the terms $\mathsf{t}_r$ 
on the right hand side.
\end{example}

\subsection{}
Let $m>n$. To consider the branching rules for $({GL}_{m},{GL}_{n})$, we use
the following embedding of ${GL}_{n}$ in ${GL}_{m}$: for $X\in {GL}_{n}$,
\begin{equation*}
\left[ 
\begin{array}{cc}
X & 0 \\ 
0 & I%
\end{array}%
\right] \in {GL}_{m}
\end{equation*}
where $I$ is the $(m-n) \times (m-n)$ identity matrix and $0$'s are the zero
matrices of proper sizes.

From (\ref{grading1}), by taking $U_{n}$-invariants, we have 
\begin{eqnarray}
\mathbb{C}[\mathsf{M}_{m,k}]^{U_{n}\times U_{k}} &=&\sum\limits_{\ell
(F)\leq k}\left( \rho _{m}^{F}\right) ^{U_{n}}  \label{UU-decomposition} \\
&=&\sum\limits_{\ell (F)\leq k}\sum\limits_{D}m(\rho _{n}^{D},\rho
_{m}^{F})\left( \rho _{n}^{D}\right) ^{U_{n}}  \notag
\end{eqnarray}
where $m(\rho _{n}^{D},\rho _{m}^{F})$ is the multiplicity of $\rho _{n}^{D}$
appearing in $\rho _{m}^{F}$, and $\left( \rho _{n}^{D}\right) ^{U_{n}}$ is
the one-dimensional space spanned by a highest weight vector of $\rho_{n}^{D}$.

\begin{definition}\label{Bmkn}
For $m\geq k$, the length $k$ branching algebra for $({GL}_{m},{GL}_{n})$ is
the $(U_{n}\times U_{k})$-invariant ring of $\mathbb{C}[\mathsf{M}_{m,k}]$
\begin{equation*}
\mathcal{B}_{m,k}^{n}=\mathbb{C}[\mathsf{M}_{m,k}]^{U_{n}\times U_{k}}
\end{equation*}
\end{definition}

\medskip

\subsection{}

Note that for $I\in \mathcal{L}_{m,k}^{n}$ all the minors $\delta _{I}$ are
invariant under the subgroup $U_{n}\times U_{k}$ of ${GL}_{n}\times {GL}_{k}$
with respect to the action (\ref{doubleaction}). In fact, the length $k$ branching algebra 
for $({GL}_{m},{GL}_{n})$ is generated by $\{\delta_{I}:I\in \mathcal{L}_{m,k}^{n}\}$.

\begin{theorem}\label{BB-standard}
For each $k$ with $m\geq k$, the branching algebra 
$\mathcal{B}_{m,k}^{n}$ for $({GL}_{m},{GL}_{n})$ is graded by 
$\Lambda_{k,n} $
\begin{equation*}
\mathcal{B}_{m,k}^{n}=\bigoplus_{(F,D)\in \Lambda _{k,n}}\mathcal{B}_{m,k}^{n}(F,D)
\end{equation*}
and the standard monomials $\Delta _{\mathsf{t}}$ for 
$\mathsf{t}\in \mathcal{T}_{m}^{n}(F,D)$ form a $\mathbb{C}$-basis of 
the $(F,D)$-graded component $\mathcal{B}_{m,k}^{n}(F,D)$.
\end{theorem}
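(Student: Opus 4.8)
The plan is to establish two things: first that $\mathcal{B}_{m,k}^{n}$ carries the claimed $\Lambda_{k,n}$-grading with graded components that are exactly the multiplicity spaces, and second that the standard monomials $\Delta_{\mathsf{t}}$ for $\mathsf{t} \in \mathcal{T}_m^n(F,D)$ span each component and are linearly independent.

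For the grading, I would begin from the decomposition (\ref{UU-decomposition}) together with the $A_k$-weight structure recorded in (\ref{grading1}). The outer shape $F$ is already tracked by the $A_k$-character $\phi_F$ coming from the $GL_m$-$GL_k$ duality, so $\mathbb{C}[\mathsf{M}_{m,k}]^{U_n \times U_k}$ is graded by $F$ with $\ell(F)\le k$. On the other subgroup, since the diagonal torus $A_n \subset GL_n$ normalizes $U_n$, the space $\mathbb{C}[\mathsf{M}_{m,k}]^{U_n\times U_k}$ is stable under $A_n$, and each highest weight vector of $\rho_n^D$ inside $\rho_m^F$ is an $A_n$-eigenvector of weight given by $D$. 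Thus the $A_n \times A_k$-bigrading refines (\ref{UU-decomposition}) into components indexed by pairs $(F,D)$; the condition $F \supseteq D$ together with $\ell(F)\le k$ and $\ell(D)\le n$ is precisely membership in $\Lambda_{k,n}$. This identifies $\mathcal{B}_{m,k}^n(F,D)$ with $m(\rho_n^D,\rho_m^F)(\rho_n^D)^{U_n}$, so its dimension equals the branching multiplicity, which by Corollary \ref{Tabnumber} is $|\mathcal{T}_m^n(F,D)|$.

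Next I would address the standard monomials. Each $\delta_I$ for $I \in \mathcal{L}_{m,k}^n$ is $(U_n\times U_k)$-invariant and lies in $\mathcal{B}_{m,k}^n$, so by the remark preceding the theorem these minors generate the algebra; hence every element of a graded component is a polynomial in the $\delta_I$. Applying the straightening relation (\ref{straightening}) of Proposition \ref{straightening1} repeatedly rewrites any monomial in the $\delta_I$ as a $\mathbb{C}$-linear combination of standard monomials $\Delta_{\mathsf{t}}$ indexed by multiple chains. The content of Corollary \ref{straightening123} is exactly what keeps us inside $\mathcal{L}_{m,k}^n$: it guarantees that the index sets appearing after straightening two columns from $\mathcal{L}_{m,k}^n$ again have their first $\min(n,|I|)$ entries constrained so that each resulting standard tableau has shape $F/D$ with $D$ prescribed, i.e. $\mathsf{t} \in \mathcal{T}_m^n(F,D)$. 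This shows the standard monomials with $\mathsf{t}\in\mathcal{T}_m^n(F,D)$ span the $(F,D)$-component.

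For linear independence, I would invoke that the standard monomials $\Delta_{\mathsf{t}}$ over all multiple chains in $\mathcal{L}_{m,k}$ are already a $\mathbb{C}$-basis of the larger ring $\mathbb{C}[\mathsf{M}_{m,k}]^{U_k}$ by the Proposition preceding Corollary \ref{straightening123}; restricting to those $\mathsf{t}$ whose indices lie in $\mathcal{L}_{m,k}^n$ preserves independence. Combining spanning and independence with the dimension count $\dim \mathcal{B}_{m,k}^n(F,D) = |\mathcal{T}_m^n(F,D)|$ from the first part forces the standard monomials to be a basis of each graded component. The main obstacle I anticipate is verifying closure under straightening: one must check carefully, using parts (2) and (3) of Corollary \ref{straightening123}, that no term produced on the right side of (\ref{straightening}) escapes $\mathcal{L}_{m,k}^n$ or alters the inner shape $D$, so that the straightening process genuinely terminates with a basis of the correct graded component rather than merely a spanning set of the ambient ring.
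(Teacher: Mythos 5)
Your proposal is correct and follows essentially the same route as the paper's proof: straightening via Proposition \ref{straightening1} together with Corollary \ref{straightening123} gives spanning of each shape-homogeneous component by standard monomials, linear independence comes from their being a subset of the standard monomial basis of $\mathbb{C}[\mathsf{M}_{m,k}]^{U_{k}}$, and the dimension count is Corollary \ref{Tabnumber}; your explicit $A_{n}\times A_{k}$-weight bigrading just spells out what the paper asserts more briefly. One small correction: the closure of $\mathcal{L}_{m,k}^{n}$ and of the inner shape $D$ under straightening is the content of part (1) of Corollary \ref{straightening123} (combined with the multiset condition $S_{r}\,\dot{\cup}\,T_{r}=I\,\dot{\cup}\,J$), not of parts (2) and (3), which the paper reserves for the symplectic and orthogonal cases.
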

\begin{proof}
For $I\in \mathcal{L}_{m,k}^{n}$, the determinant functions $\delta _{I}$ as
elements of $\mathbb{C}[\mathsf{M}_{m,k}]^{U_{k}}$ satisfy the relations 
(\ref{straightening}) and by keeping track of the entries of $I$ and $J$ in
this relation, we can easily see that all $S_{r}$ and $T_{r}$ appearing on
the right hand side of (\ref{straightening}) are elements of 
$\mathcal{L}_{m,k}^{n}$ and $sh_{n}(S_{r} \preceq T_{r})$'s are the same for all $r$ as 
in the first statement of Corollary \ref{straightening123}. By applying
these relations repeatedly, we can express every monomial in 
$\{\delta _{I}: I\in \mathcal{L}_{m,k}^{n}\}$ as a linear combination of standard 
monomials of the same shape. In particular, the algebra $\mathcal{B}_{m,k}^n$ is graded by 
the shapes $sh_n(\mathsf{t}) \in \Lambda_{k,n}$ of standard monomials for $(GL_m,GL_n)$.
Now, it is enough to show that for each shape $F/D$ with 
$(F,D)\in \Lambda _{k,n}$, the number of standard monomials 
$\Delta _{\mathsf{t}}$ for $\mathsf{t}\in \mathcal{T}_{m}^{n}(F,D)$ is equal to the
multiplicity of $\rho _{n}^{D}$ in $\rho _{m}^{F}$, which is Corollary 
\ref{Tabnumber}.
\end{proof}

Note that the standard monomials $\Delta _{\mathsf{t}}$ for 
$\mathsf{t}\in \mathcal{T}_{m}^{n}(F,D)$ are invariant under the action of $U_{n}$ and
scaled by the character $\phi_D$ under the action of the diagonal subgroup of $GL_{n}$:

\begin{eqnarray}\label{U-invariant-weight}
diag(a_{1},\cdots ,a_{n})\cdot \Delta _{\mathsf{t}} &=&
\phi_D \left( diag(a_{1},\cdots ,a_{n}) \right) \Delta _{\mathsf{t}} \\
&=&(a_{1}^{d_{1}}\cdots a_{n}^{d_{n}}) \Delta _{\mathsf{t}} \notag
\end{eqnarray}
for $D=(d_{1},\cdots ,d_{n})$. This shows that standard monomials 
$\Delta _{\mathsf{t}}$ for $\mathsf{t}\in \mathcal{T}_{m}^{n}(F,D)$ are the 
highest weight vectors of the copies of $\rho _{n}^{D}$ in $\rho _{m}^{F}$.
Accordingly, we have

\begin{proposition}
The standard monomials $\Delta_{\mathsf{t}}$ with 
$\mathsf{t} \in \mathcal{T}_m^n (F,D)$, as
$\mathbb{C}$-basis elements of $\mathcal{B}_{m,k}^{n}(F,D)$, are the highest weight 
vectors of the copies of $\rho _{n}^{D}$ in $\rho _{m}^{F}$. Therefore, we have
\begin{equation*}
\mathcal{B}_{m,k}^{n}(F,D)\cong \mathrm{Hom}_{{GL}_{n}}
(\rho _{n}^{D}, \rho_{m}^{F}).
\end{equation*}
\end{proposition}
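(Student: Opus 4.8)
The plan is to deduce the isomorphism from the graded-basis description already established in Theorem~\ref{BB-standard} together with the standard dictionary, supplied by highest weight theory, between highest weight vectors and intertwining maps. First I would recall that for the irreducible $GL_n$-module $\rho_n^D$ the space $(\rho_n^D)^{U_n}$ is one-dimensional, spanned by a highest weight vector $v_D^+$ of $A_n$-weight $\phi_D$, where $A_n$ denotes the diagonal torus of $GL_n$. A $GL_n$-equivariant map $\varphi\colon \rho_n^D \to \rho_m^F$ is then completely determined by the single vector $\varphi(v_D^+)$, which, since $v_D^+$ is $U_n$-fixed of weight $\phi_D$, must itself be a $U_n$-invariant vector of $A_n$-weight $\phi_D$ in $\rho_m^F$. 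Conversely, any such vector generates, under the $GL_n$-action, an irreducible copy of $\rho_n^D$ inside $\rho_m^F$ and so equals $\varphi(v_D^+)$ for a unique $\varphi$. This yields a canonical linear isomorphism
$$\mathrm{Hom}_{GL_n}(\rho_n^D, \rho_m^F) \longrightarrow \bigl((\rho_m^F)^{U_n}\bigr)_{\phi_D}, \qquad \varphi \mapsto \varphi(v_D^+),$$
where the target is the $\phi_D$-weight space for $A_n$ inside the $U_n$-invariants of $\rho_m^F$.

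Second I would identify this weight space with the graded component $\mathcal{B}_{m,k}^n(F,D)$. By the remark~(\ref{U-invariant-weight}), each standard monomial $\Delta_{\mathsf{t}}$ with $\mathsf{t}\in \mathcal{T}_m^n(F,D)$ is fixed by $U_n$ and scaled by the character $\phi_D$ under $A_n$; hence $\Delta_{\mathsf{t}}$ lies in $\bigl((\rho_m^F)^{U_n}\bigr)_{\phi_D}$ and is a highest weight vector generating a copy of $\rho_n^D$ in $\rho_m^F$. By Theorem~\ref{BB-standard} these monomials form a $\mathbb{C}$-basis of $\mathcal{B}_{m,k}^n(F,D)$, so this graded component is precisely the weight space $\bigl((\rho_m^F)^{U_n}\bigr)_{\phi_D}$, and it is spanned exactly by the highest weight vectors of weight $D$. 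This already establishes the first assertion of the Proposition.

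Finally, composing the two identifications gives the desired isomorphism $\mathcal{B}_{m,k}^n(F,D) \cong \mathrm{Hom}_{GL_n}(\rho_n^D, \rho_m^F)$. As a consistency check rather than an independent step, both sides have dimension $m(\rho_n^D,\rho_m^F)$: the left by the basis count of Theorem~\ref{BB-standard} together with Corollary~\ref{Tabnumber}, and the right by Schur's lemma as recalled in the introduction. I do not expect a genuine obstacle here; the only point requiring care is to confirm that $\varphi \mapsto \varphi(v_D^+)$ is a natural $\mathbb{C}$-linear isomorphism compatible with the grading, i.e.\ that $\varphi(v_D^+)$ really has $A_n$-weight $\phi_D$ and that the assignment is injective. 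Both follow immediately from the one-dimensionality of $(\rho_n^D)^{U_n}$ and the irreducibility of $\rho_n^D$, so the substance of the argument is the bookkeeping already carried out in~(\ref{U-invariant-weight}) and Theorem~\ref{BB-standard}.
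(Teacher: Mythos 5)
Your proposal is correct and follows essentially the same route as the paper: the paper likewise derives the proposition directly from the observation~(\ref{U-invariant-weight}) that each $\Delta_{\mathsf{t}}$ is $U_n$-invariant of $A_n$-weight $\phi_D$, combined with the basis statement of Theorem~\ref{BB-standard}, leaving the dictionary $\varphi \mapsto \varphi(v_D^+)$ between intertwining maps and highest weight vectors implicit (it is recalled in the introduction). The only presentational quibble is that your dimension count via Corollary~\ref{Tabnumber} and Schur's lemma is not merely a ``consistency check'' but the step that guarantees the graded component exhausts the full weight space $\bigl((\rho_m^F)^{U_n}\bigr)_{\phi_D}$; since you do supply it, the argument is complete.
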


\smallskip

\subsection{}

Toric degenerations of the branching algebras $\mathcal{B}_{m,k}^{n}$ can be
induced by the same methods used for the case of the flag algebra 
$\mathbb{C}[\mathsf{M}_{m,k}]^{U_{k}}$ in the literature, for example, 
\cite{GL96,Ki08,KM05, MS05, St95}. See also \cite[Theorem 1]{Vin95}, for the properties 
of the algebra of polynomials on a semisimple algebraic group and its associated 
graded algebra. 

\begin{theorem}
\label{Deformation}The length $k$ branching algebra $\mathcal{B}_{m,k}^{n}$
for $({GL}_{m},{GL}_{n})$ is a flat deformation of the Hibi algebra 
$\mathcal{H}_{m,k}^{n}$ over $\mathcal{L}_{m,k}^{n}$.
\end{theorem}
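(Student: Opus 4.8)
The plan is to realize both algebras as quotients of the polynomial ring $\mathbb{C}[z_I : I \in \mathcal{L}_{m,k}^n]$ and to connect them by a one-parameter (SAGBI-type) weight degeneration. Writing $\mathcal{B}_{m,k}^n = \mathbb{C}[z_I]/\mathcal{I}$, where $\mathcal{I}$ is the ideal of relations among the minors $\delta_I$, I would first record the two facts that make the comparison possible: by Theorem \ref{BB-standard} the standard monomials for $(GL_m,GL_n)$ form a $\mathbb{C}$-basis of $\mathcal{B}_{m,k}^n$, and by Lemma \ref{Hibi-basis} the same standard monomials form a $\mathbb{C}$-basis of $\mathcal{H}_{m,k}^n = \mathbb{C}[z_I]/\mathcal{I}_H$, where $\mathcal{I}_H = \langle z_\alpha z_\beta - z_{\alpha \wedge \beta} z_{\alpha \vee \beta} \rangle$ is the Hibi ideal; moreover both are graded by $\Lambda_{k,n}$ and have dimension $\#\mathcal{T}_m^n(F,D)$ in each $(F,D)$-component. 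Thus it suffices to produce a weight degeneration of $\mathcal{B}_{m,k}^n$ whose special fiber is $\mathcal{H}_{m,k}^n$, after which flatness follows from the equality of Hilbert functions.

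Next I would construct the weight. Using the Birkhoff duality of Proposition \ref{joinirred2}, attach to each $I$ its order increasing set $A_I \subseteq \Gamma_{m,k}^n$, so that $A_{I \wedge J} = A_I \cup A_J$ and $A_{I \vee J} = A_I \cap A_J$. Any real weighting of the poset elements of $\Gamma_{m,k}^n$ then yields, by summing over $A_I$, a modular function $w$ with $w(I) + w(J) = w(I \wedge J) + w(I \vee J)$; equivalently $w(I) = \sum_h \alpha_h \, (\text{$h$-th entry of } I)$ for arbitrary coefficients $\alpha_h$. Modularity guarantees that each Hibi binomial $z_I z_J - z_{I \wedge J} z_{I \vee J}$ is $w$-homogeneous. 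I would then choose $\alpha_1 \gg \cdots \gg \alpha_k > 0$ so that $w$ gives lexicographic priority to the smaller rows. Corollary \ref{straightening123}, which rests on Proposition \ref{straightening1}(2), says that in each straightening relation every non-Hibi term $\delta_{S_r}\delta_{T_r}$ has, at the smallest row where its row-entry-sum differs from that of $I \dot{\cup} J$, a strictly larger sum, including the single-entry rows beyond the shorter column. With the lex-dominant choice of $\alpha_h$ this forces $w(S_r) + w(T_r) > w(I) + w(J)$, so the minimal-weight part of each straightening relation is exactly the Hibi binomial $z_I z_J - z_{I \wedge J} z_{I \vee J}$.

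It then remains to assemble the flat family and identify the special fiber. I would pass to the Rees algebra of the $w$-filtration over $\mathbb{C}[t]$, whose generic fiber is $\mathcal{B}_{m,k}^n$ and whose special fiber is the associated graded $\mathrm{gr}_w \mathcal{B}_{m,k}^n = \mathbb{C}[z_I]/\mathrm{in}_w(\mathcal{I})$. The previous step gives $\mathcal{I}_H \subseteq \mathrm{in}_w(\mathcal{I})$ and hence a graded surjection $\mathcal{H}_{m,k}^n \twoheadrightarrow \mathrm{gr}_w \mathcal{B}_{m,k}^n$. Since $\dim (\mathrm{gr}_w \mathcal{B}_{m,k}^n)_{(F,D)} = \dim (\mathcal{B}_{m,k}^n)_{(F,D)} = \#\mathcal{T}_m^n(F,D) = \dim (\mathcal{H}_{m,k}^n)_{(F,D)}$, this surjection is an isomorphism, so $\mathrm{in}_w(\mathcal{I}) = \mathcal{I}_H$ and $\mathrm{gr}_w \mathcal{B}_{m,k}^n \cong \mathcal{H}_{m,k}^n$. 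Because $\mathbb{C}[t]$ is a principal ideal domain and the Rees algebra is $t$-torsion-free, it is flat over $\mathbb{C}[t]$; this is precisely the statement that $\mathcal{B}_{m,k}^n$ is a flat deformation of $\mathcal{H}_{m,k}^n$. The deformation machinery here is the same as that used for the flag algebra in \cite{GL96, Ki08, KM05, MS05, St95}.

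The hard part will be the uniform construction of a single weight in the second step: modularity is automatic from Birkhoff duality, but one must verify that one fixed $w$ strips every straightening relation down to its Hibi binomial simultaneously. This rests entirely on the sharp ``smallest differing row has strictly larger entry-sum'' estimate of Proposition \ref{straightening1}(2), together with checking that the straightening relations stay inside $\mathcal{L}_{m,k}^n$ (Corollary \ref{straightening123}(1)) so that the degeneration takes place over the correct lattice. Once it is established that the straightening relations form a SAGBI-type system whose initial ideal is exactly the Hibi ideal, the flatness is a formal consequence of the matching standard-monomial bases.
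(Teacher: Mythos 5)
Your proposal is correct and follows essentially the same route as the paper's proof: a lex-dominant modular weight (the paper takes $wt(I)=\sum_r i_r N^{m-r}$ with $N>2m$, a special case of your $\alpha_1 \gg \cdots \gg \alpha_k$), Proposition \ref{straightening1}(2) to see that each straightening relation degenerates to its Hibi binomial, and the Rees algebra over $\mathbb{C}[t]$ to conclude flatness. The only difference is one of explicitness: where the paper calls it ``straightforward'' that the associated graded ring is $\mathcal{H}_{m,k}^{n}$, you spell this out via the surjection $\mathcal{H}_{m,k}^{n}\twoheadrightarrow \mathsf{gr}^{wt}(\mathcal{B}_{m,k}^{n})$ and the matching standard-monomial Hilbert functions from Theorem \ref{BB-standard} and Lemma \ref{Hibi-basis}.
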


\begin{proof}
Let us impose a filtration on $\mathcal{B}_{m,k}^{n}$ by giving the
following weight on each monomials. Fix an integer $N$ greater than $2m$,
and then define the weight of $I=[i_{1},\cdots ,i_{a}]\in \mathcal{L}_{m,k}^{n}$ as
\begin{equation}
wt(I)=\sum_{r\geq 1}i_{r}N^{m-r}.  \label{def weight}
\end{equation}
The weight of a standard tableau $\mathsf{t}$ consisting of $I_{c}$ is
defined to be the sum of individual weights, i.e., 
$wt(\mathsf{t})=\sum_{c}wt(I_{c})$. Then we can define a $\mathbb{Z}$-filtration 
$\mathsf{F}^{wt}=\{\mathsf{F}_{d}^{wt}\}$ on 
$\mathcal{B}_{m,k}^{n}=\mathbb{C}[\mathsf{M}_{m,k}]^{U_{n}\times U_{k}}$ with respect to 
the weight $wt$. Set $\mathsf{F}_{d}^{wt}(\mathcal{B}_{m,k}^{n})$ to be the space spanned by
\begin{equation*}
\left\{ \Delta _{\mathsf{t}}:wt(\mathsf{t})\geq d\right\} .
\end{equation*}
The filtration $\mathsf{F}^{wt}$ is well defined, since every product 
$\prod \delta _{I_{c}}$ can be expressed as a linear combination of standard
monomials with bigger weights by Proposition \ref{straightening1}. For all pairs 
$A,B\in \mathcal{L}_{m,k}^{n}$, since $wt(A)+wt(B)=wt(A\wedge B)+wt(A\vee B)$,
 $\delta _{A}\delta _{B}$ and $\delta _{A\wedge B}\delta _{A\vee B}$ belong
to the same associated graded component. Therefore, we have 
$s_{A}\cdot_{gr}s_{B}=s_{A\wedge B}\cdot _{gr}s_{A\vee B}$ where $s_{C}$ are elements
corresponding to $\delta _{C}$ in the associated graded ring 
$\mathsf{gr}^{wt}(\mathcal{B}_{m,k}^{n})$ of $\mathcal{B}_{m,k}^{n}$ with respect to 
the filtration $\mathsf{F}^{wt}$. Then it is straightforward to show that the
associated graded ring $\mathsf{gr}^{wt}(\mathcal{B}_{m,k}^{n})$ forms the
Hibi algebra over $\mathcal{L}_{m,k}^{n}$. From a general property of the
Rees algebras (e.g., \cite{AB04}), the Rees algebra $\mathcal{R}^{t}$ of 
$\mathcal{B}_{m,k}^{n}$ with respect to $\mathsf{F}^{wt}$:
\begin{equation*}
\mathcal{R}^{t}=\bigoplus_{d\geq 0}\mathsf{F}_{d}^{wt}(\mathcal{B}_{m,k}^{n})t^{d}
\end{equation*}
is flat over $\mathbb{C}[t]$ with its general fiber isomorphic to 
$\mathcal{B}_{m,k}^{n}$ and special fiber isomorphic to the associated graded ring
which is $\mathcal{H}_{m,k}^{n}$.
\end{proof}

We remark that $Spec(\mathcal{H}_{m,k}^{n})$ is an affine toric variety in the
sense of \cite{St95}. Then, the rational polyhedral cone corresponding to the affine 
toric variety and the integral points therein can be realized from 
our description of the affine semigroup $\mathcal{P}_{m,k}^n$ given at 
the beginning of \S \ref{polycone}.

\medskip


\section{Stable Range Branching Algebra for $({Sp}_{2m},{Sp}_{2n})$}


In this section, starting from combinatorial descriptions of stable range branching rules, 
we study the affine semigroup algebra and its associated Hibi algebra for $({Sp}_{2m},{Sp}_{2n})$.
Then we construct an explicit model for the stable range branching algebra. Along with these,
we also show that these algebraic objects are isomorphic to their $(GL_{2m}, GL_{2n})$ counterparts 
with a proper length condition.

\smallskip

Recall that we can label irreducible rational representation of ${Sp}_{2m}$, after 
identifying dominant weights with Young diagrams, by Young diagrams with less than or equal
to $m$ rows (cf. \cite[\S 3.1.4]{GW09}). We let $\tau_{2m}^{F}$ denote the irreducible 
representation of ${Sp}_{2m}$ labeled by Young diagram $F$.

\subsection{}\label{sp-section1}

Let $J_{m}=(j_{a,b})$ be the $m\times m$ matrix with $j_{a,m+1-a}=1$ for 
$1\leq a\leq m$ and $0$ otherwise. Then we define the symplectic group 
${Sp}_{2m}$ of rank $m$ as the subgroup of ${GL}_{2m}$ preserving the skew
symmetric bilinear form on $\mathbb{C}^{2m}$ induced by 
\begin{equation*}
\left[ 
\begin{array}{cc}
0 & J_{m} \\ 
-J_{m} & 0
\end{array}
\right] .
\end{equation*}

Note that, for the elementary basis $\{e_{i}\}$ of the space $\mathbb{C}^{2m}$, 
$e_{j}$ and $e_{2m+1-j}$ make an isotropic pair for $1\leq j\leq m$ with respect 
to this bilinear form. Also, the subgroup of upper triangular matrices with $1$'s on 
the diagonal can be taken as a maximal unipotent subgroup of ${Sp}_{2m}$. We will
denoted it by $U_{{Sp}_{2m}}$.

\medskip

For $n<m$, we identify ${Sp}_{2n}$ with the subgroup of ${Sp}_{2m}$
preserving the skew symmetric bilinear form restricted to the subspace of 
$\mathbb{C}^{2m}$ spanned by $$\left\{ e_{a},e_{2m+1-a}:1\leq a\leq n \right\}.$$ 
Then ${Sp}_{2n}$ can be embedded in ${Sp}_{2m}$ as follows.
\begin{equation} \label{Sp embedding}
\left[ 
\begin{array}{cc}
X & Y \\ 
Z & W%
\end{array}%
\right] \mapsto \left[ 
\begin{array}{ccc}
X & 0 & Y \\ 
0 & I & 0 \\ 
Z & 0 & W%
\end{array}%
\right] 
\end{equation}
where $X,Y,Z,W$ are $n \times n$ matrices, $I$ is the $2(m-n) \times 2(m-n)$
identity matrix, and $0$'s are the zero matrices of proper sizes.

\subsection{}

In order to construct an affine semigroup encoding stable range branching rules for
$({Sp}_{2m},{Sp}_{2n})$, we review the following combinatorial description of 
branching multiplicities.

\begin{lemma}[{\protect\cite[Theorem 8.1.5]{GW09}}] \label{Sp_branching}
For Young diagrams $F$ and $D$ with $\ell (F)\leq m$ and 
$\ell (D)\leq m-1$, the multiplicity of $\tau _{2(m-1)}^{D}$ in $\tau
_{2m}^{F}$ as a ${Sp}_{2(m-1)}$ representation is equal to the number of
Young diagrams $E$ satisfying the interlacing condition $F\sqsupseteq
E\sqsupseteq D$.
\end{lemma}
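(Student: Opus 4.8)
The plan is to prove this single restriction rule directly from the Weyl character formula for $Sp_{2m}$ and to read off the multiplicities by specializing one torus coordinate. Writing $x_1,\dots,x_m$ for the coordinates of the maximal torus of $Sp_{2m}$ (so that a diagonal element has eigenvalues $x_1,\dots,x_m,x_m^{-1},\dots,x_1^{-1}$), the embedding (\ref{Sp embedding}) with $n=m-1$ identifies the torus of $Sp_{2(m-1)}$ with the subtorus $\{x_m=1\}$. Hence restriction of characters is the specialization $x_m\to 1$, and the multiplicities are exactly the coefficients in
\begin{equation*}
\mathrm{ch}\,\tau_{2m}^{F}(x_1,\dots,x_{m-1},1)=\sum_{D} m\bigl(\tau_{2(m-1)}^{D},\tau_{2m}^{F}\bigr)\,\mathrm{ch}\,\tau_{2(m-1)}^{D}(x_1,\dots,x_{m-1}).
\end{equation*}
So the entire problem reduces to computing the left-hand specialization and recognizing it as a nonnegative combination of symplectic characters in $m-1$ variables.

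First I would record the type $C_m$ Weyl character formula as a ratio of alternants,
\begin{equation*}
\mathrm{ch}\,\tau_{2m}^{F}=\frac{\det_{1\le i,j\le m}\bigl(x_i^{\ell_j}-x_i^{-\ell_j}\bigr)}{\det_{1\le i,j\le m}\bigl(x_i^{m-j+1}-x_i^{-(m-j+1)}\bigr)},\qquad \ell_j=F_j+m-j+1.
\end{equation*}
Setting $x_m=1$ annihilates the last row of both determinants, producing an indeterminate $0/0$; resolving it is the technical core. Using the telescoping identity
\begin{equation*}
\frac{t^{a}-t^{-a}}{t-t^{-1}}=t^{a-1}+t^{a-3}+\cdots+t^{-(a-1)},
\end{equation*}
I can factor $(x_m-x_m^{-1})$ out of the last row of each alternant. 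These factors cancel in the ratio, and evaluating the remaining row quotients at $x_m=1$ (a telescoping sum of $a$ terms specializes to $a$) replaces the two last rows by the constant rows $(\ell_j)_{j}$ and $(m-j+1)_{j}$. Laplace expansion of the numerator along this last row then expresses $\mathrm{ch}\,\tau_{2m}^{F}(\dots,1)$ as an explicit alternating sum of $(m-1)$-variable alternants, each weighted by one of the integers $\ell_j$.

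The decisive step is to reorganize this weighted alternating sum into a sum of $Sp_{2(m-1)}$ characters and to identify the coefficients with the interlacing count. Because the symplectic torus character is invariant under $x_m\mapsto x_m^{-1}$, the single value $x_m=1$ collapses the pair $(x_m,x_m^{-1})$, so the branching range is in effect traversed twice; this is the origin of the \emph{double} interlacing $F\sqsupseteq E\sqsupseteq D$, with the intermediate diagram $E$ indexing the summation variable and the two inequalities playing the role of its two admissible ranges. Conceptually this is the factorization of the restriction through Proctor's odd symplectic group $Sp_{2m-1}$, each of whose two branching steps is a single interlacing. I expect the main obstacle to be the bookkeeping in this reorganization -- controlling the signs from the Laplace expansion, matching the weighted alternant sum against the Weyl numerators for $Sp_{2(m-1)}$, and verifying that each pair $(E,D)$ with $F\sqsupseteq E\sqsupseteq D$ is counted once with coefficient $+1$. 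Once the coefficient of each $\mathrm{ch}\,\tau_{2(m-1)}^{D}$ is shown to equal $\#\{E:F\sqsupseteq E\sqsupseteq D\}$, the lemma follows; a direct check in the first nontrivial case $Sp_4\downarrow Sp_2$, where $\mathrm{ch}\,\tau_4^{(1,0)}(x_1,1)=x_1+x_1^{-1}+2$ decomposes as $\mathrm{ch}\,\tau_2^{(1)}+2\,\mathrm{ch}\,\tau_2^{(0)}$, already exhibits the doubling.
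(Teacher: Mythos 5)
The first thing to note is that the paper does not prove this lemma at all: it is quoted verbatim from \cite[Theorem 8.1.5]{GW09}, so there is no internal proof to compare against, and what you are really doing is supplying a proof of a cited classical result. Your setup is correct as far as it goes: restriction to $Sp_{2(m-1)}$ under the embedding (\ref{Sp embedding}) is indeed character specialization at $x_m=1$, the type $C_m$ alternant formula is right, and factoring $(x_m-x_m^{-1})$ out of the last row of numerator and denominator correctly replaces those rows by the constant rows $(\ell_j)_j$ and $(m-j+1)_j$. Your $Sp_4\downarrow Sp_2$ check is also correct.

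The genuine gap is the step you defer as ``bookkeeping,'' and it starts one line earlier than you think: after specialization the denominator is \emph{not} (a multiple of) the Weyl denominator of $Sp_{2(m-1)}$. Using the factorization $\det_{i,j}\bigl(x_i^{m-j+1}-x_i^{-(m-j+1)}\bigr)=\prod_i\bigl(x_i-x_i^{-1}\bigr)\prod_{i<j}\bigl(x_i+x_i^{-1}-x_j-x_j^{-1}\bigr)$, the degenerate denominator equals
\begin{equation*}
\Delta_{C_{m-1}}(x_1,\dots,x_{m-1})\cdot\prod_{i=1}^{m-1}\bigl(x_i+x_i^{-1}-2\bigr),
\end{equation*}
e.g.\ for $m=2$ it is $(x_1-x_1^{-1})(x_1+x_1^{-1}-2)$, not $x_1-x_1^{-1}$. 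So the Laplace expansion of the degenerate numerator does \emph{not} express $\mathrm{ch}\,\tau_{2m}^{F}(\dots,1)$ as a weighted sum of $(m-1)$-variable alternants over $\Delta_{C_{m-1}}$, contrary to what your sketch asserts. Each minor in that expansion is $\Delta_{C_{m-1}}\cdot\mathrm{ch}\,\tau_{2m-2}^{E_j}$ with $E_j=(f_1+1,\dots,f_{j-1}+1,f_{j+1},\dots,f_m)$, so what actually remains to be proven is the identity
\begin{equation*}
\sum_{j=1}^{m}(-1)^{m+j}\,\ell_j\,\mathrm{ch}\,\tau_{2m-2}^{E_j}
=\prod_{i=1}^{m-1}\bigl(x_i+x_i^{-1}-2\bigr)\cdot
\sum_{D}\#\{E: F\sqsupseteq E\sqsupseteq D\}\,\mathrm{ch}\,\tau_{2m-2}^{D},
\end{equation*}
in which the alternating signs, the integer weights $\ell_j$, and the extra product must all conspire. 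This identity \emph{is} the theorem; nothing in your proposal proves it, and the appeal to Proctor's odd symplectic factorization assumes exactly the two-interlacing statement you are trying to establish. If you want to complete a character-theoretic proof, the standard way to avoid the degenerate-denominator problem is to keep $x_m=t$ generic, expand $\mathrm{ch}\,\tau_{2m}^{F}(x_1,\dots,x_{m-1},t)$ as a Laurent polynomial in $t$ whose coefficients are $Sp_{2(m-1)}$ characters (two GL-type single-interlacing steps, one for positive and one for negative powers of $t$), and only set $t=1$ at the end.
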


For example, if $F=(5,3, 3, 2, 1)$ and $D=(4,3,2,2)$, then the multiplicity of 
$\tau_{8}^{D}$ in $\tau_{10}^{F}$ is equal to the number of 
$E=(e_1, e_2, \cdots, e_5)$ in 
\begin{equation*}
\begin{array}{cccccccccc}
5 &  & 3 &  & 3 &  & 2 &  & 1 &  \\ 
   & e_{1} &  & e_{2} &  & e_{3} &  & e_{4} &  & e_{5} \\ 
&  & 4 &  & 3 &  & 2 &  & 2 &  %
\end{array}%
\end{equation*}
so that the entries are weakly decreasing from left to right along diagonals.

\smallskip

Note that this branching is not multiplicity free and rather similar to the
two-step branchings for the general linear groups. To obtain a description
of the multiplicity spaces for $({Sp}_{2m},{Sp}_{2n})$, we can simply
iterate the above lemma. Because of the length condition 
$\ell (E_{k})\leq k$ of ${Sp}_{2k}$ representations $\tau _{2k}^{E_{k}}$ 
for $n\leq k\leq m$, it will be quite different from the $(GL_{2m}, GL_{2n})$ case 
(Proposition \ref{number of Patterns}). Within the stable 
range $\ell (F)\leq n$, however, we have exactly the same description.

In the previous example, if we set $F=(5,3, 3, 2, 0)$ so that $\ell(F)=4$, then 
the multiplicity of $\tau_{8}^{D}$ in $\tau_{10}^{F}$ is equal to the number of 
$E=(e_1, e_2, \cdots, e_5)$ in 
\begin{equation*}
\begin{array}{cccccccccc}
5 &  & 3 &  & 3 &  & 2 &  & 0 &  \\ 
   & e_{1} &  & e_{2} &  & e_{3} &  & e_{4} &  & e_{5} \\ 
&  & 4 &  & 3 &  & 2 &  & 2 &  %
\end{array}%
\end{equation*}
and the interlacing condition makes $e_5 =0$. Therefore the multiplicity
of $\tau_{8}^{D}$ in $\tau_{10}^{F}$ is equal to the multiplicity of
the $GL_8$ representation $\rho_{8}^{D}$ in the $GL_{10}$ representation $\rho_{10}^F$.

\begin{remark}
\begin{enumerate}
\item For complete GT patterns for $Sp_{2m}$, we refer to
\cite{Kir88} and \cite{Pr94}. See also \cite{Ki08} for their ring theoretic interpretation. 

\item The branching algebra for $(Sp_{2m}, Sp_{2m-2})$ without any restriction on 
the length of representations has interesting algebraic and combinatorial properties 
with an extra structure from the action of $SL_2 \times \cdots \times SL_2$. 
For this, we refer to \cite{KY11}.
\end{enumerate}
\end{remark}

\subsection{}
Recall that $\mathcal{P}_{2m}^{2n}(F,D)$ is the set of all $GT$ patterns for 
$({GL}_{2m},{GL}_{2n})$ whose types are $F/D$. Within the stable range 
$\ell(F)\leq n$, $F\supseteq D$ implies $\ell (D)\leq n$, and therefore the
support of every GT pattern in $\mathcal{P}_{2m}^{2n}(F,D)$ lies in the GT
poset $\Gamma _{2m,n}^{2n}$ of length $n$:
\begin{equation*}
\begin{array}{cccccccccc}
x_{1}^{(2m)} &  & x_{2}^{(2m)} &  & \cdots &  & x_{n}^{(2m)} &  &  &  \\ 
& x_{1}^{(2m-1)} &  & x_{2}^{(2m-1)} &  & \cdots &  & x_{n}^{(2m-1)} &  & \\ 
&  & \ddots &  & \ddots &  & \cdots &  & \ddots &  \\ 
&  &  & x_{1}^{(2n)} &  & x_{2}^{(2n)} &  & \cdots &  & x_{n}^{(2n)} %
\end{array}%
\end{equation*}

\begin{proposition}\label{Sp_counting}
Let $F$ and $D$ be Young diagrams with $F\supseteq D$ and 
$\ell (F)\leq n$. Then the branching multiplicity 
$m(\tau _{2n}^{D},\tau_{2m}^{F})$ is equal to the number of elements in 
$\mathcal{P}_{2m}^{2n}(F,D) $, and therefore it is equal to the number of elements 
in $\mathcal{T}_{2m}^{2n}(F,D)$.
\begin{proof}
From Lemma \ref{Sp_branching}, by using the same argument used to prove 
(2) of Proposition \ref{number of Patterns}, the set $\mathcal{P}_{2m}^{2n}(F,D)$ of 
GT patterns of shape $F/D$ counts the multiplicity of $\tau_{2n}^{D}$ in $\tau _{2m}^{F}$. 
The last statement follows from Proposition \ref{bijection YT-GT}.
\end{proof}
\end{proposition}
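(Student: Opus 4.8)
The plan is to reduce the counting of the symplectic branching multiplicity $m(\tau_{2n}^{D},\tau_{2m}^{F})$ to a telescoping count of interlacing sequences and then to match that count against the combinatorial objects already developed for the $({GL}_{2m},{GL}_{2n})$ setting. First I would iterate Lemma \ref{Sp_branching} along the chain of subgroups ${Sp}_{2m}\supset {Sp}_{2(m-1)}\supset\cdots\supset {Sp}_{2n}$, exactly as the successive application of the Pieri-type rule was carried out in part (2) of Proposition \ref{number of Patterns}. This shows that $m(\tau_{2n}^{D},\tau_{2m}^{F})$ equals the number of sequences $(E_{m-1},\dots,E_{n+1})$ of Young diagrams satisfying the interlacing chain
\begin{equation*}
F\sqsupseteq E_{m-1}\sqsupseteq\cdots\sqsupseteq E_{n+1}\sqsupseteq D,
\end{equation*}
where, setting $F=E_m$ and $D=E_n$, each $E_i$ records the $i$-th row of a candidate array over the poset $\Gamma_{2m}^{2n}$.

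The key point, and the step I expect to carry the real content, is the stable range constraint $\ell(F)\le n$. The symplectic branching rule differs from the general linear one precisely because the intermediate diagrams $E_i$ are subject to the length restriction $\ell(E_i)\le i$ coming from the rank of ${Sp}_{2i}$. I would argue that within the stable range this restriction is automatically satisfied and hence vacuous: since $F\supseteq D$ and $\ell(F)\le n$ force $\ell(D)\le n$, the interlacing condition $E_i\sqsupseteq E_{i-1}$ propagated down the chain confines the support of the whole array to the first $n$ columns, so that every $E_i$ has at most $n$ nonzero parts. Because $i\ge n$ all along the chain (as $n\le i\le m$), the bound $\ell(E_i)\le n\le i$ holds for free, and the symplectic count coincides with the count of unrestricted interlacing sequences. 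This is exactly the phenomenon illustrated by the two worked examples preceding the proposition, where the interlacing condition forced $e_5=0$; I would make that forcing rigorous for arbitrary rows rather than a single pair.

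Once the length restriction is shown to be inactive, the count of interlacing sequences is literally the defining count of GT patterns in $\mathcal{P}_{2m}^{2n}(F,D)$: each such sequence is an order preserving map on $\Gamma_{2m}^{2n}$ whose $m$-th and $n$-th rows are $F$ and $D$, and since $\ell(F)\le n$ the support lies in $\Gamma_{2m,n}^{2n}$ as noted just before the statement. Hence $m(\tau_{2n}^{D},\tau_{2m}^{F})=\#\mathcal{P}_{2m}^{2n}(F,D)$. The final equality with $\#\mathcal{T}_{2m}^{2n}(F,D)$ is then immediate from the bijection of Proposition \ref{bijection YT-GT}. The main obstacle is purely the second step: one must verify cleanly that the symplectic length constraints $\ell(E_i)\le i$ never eliminate any array that survives in the ${GL}$ computation, and conversely that no extra arrays appear, so that the symplectic and general-linear interlacing counts agree term by term within the stable range.
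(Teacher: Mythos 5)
Your outline coincides with the paper's: iterate Lemma \ref{Sp_branching} down the chain $Sp_{2m}\supset Sp_{2(m-1)}\supset\cdots\supset Sp_{2n}$, argue that the stable range makes the symplectic rank restrictions vacuous, and finish with Proposition \ref{bijection YT-GT}. The stable-range observation itself (interlacing propagates $\ell\leq n$ down the whole array, so any constraint of the form $\ell\leq i$ with $i\geq n$ holds for free) is correct and is indeed the heart of the matter. But your first step misrecords what the iteration actually produces, and the error is substantive rather than notational. Lemma \ref{Sp_branching} is \emph{not} a multiplicity-free Pieri-type rule: the multiplicity of $\tau_{2(k-1)}^{D_{k-1}}$ in $\tau_{2k}^{D_k}$ equals the \emph{number} of diagrams $E_k$ with $D_k\sqsupseteq E_k\sqsupseteq D_{k-1}$, so one symplectic step contributes \emph{two} new rows of data, the interlacing witness $E_k$ and the next highest weight $D_{k-1}$. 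Iterating therefore identifies $m(\tau_{2n}^{D},\tau_{2m}^{F})$ with the number of alternating chains
\begin{equation*}
F\sqsupseteq E_{m}\sqsupseteq D_{m-1}\sqsupseteq E_{m-1}\sqsupseteq D_{m-2}\sqsupseteq\cdots\sqsupseteq D_{n+1}\sqsupseteq E_{n+1}\sqsupseteq D,
\qquad \ell(D_{k})\leq k \ \text{for all } k,
\end{equation*}
which have $2(m-n)+1$ rows --- exactly matching the $2(m-n)+1$ rows of $\Gamma_{2m}^{2n}$, and exactly why the symplectic pair is compared with $(GL_{2m},GL_{2n})$ rather than $(GL_{m},GL_{n})$. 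Your chain $F\sqsupseteq E_{m-1}\sqsupseteq\cdots\sqsupseteq E_{n+1}\sqsupseteq D$ has only $m-n+1$ rows, one per group in the chain; this amounts to transplanting the formula from the proof of Proposition \ref{number of Patterns}(2) verbatim, i.e., to treating the symplectic branching as multiplicity free, which is precisely what the paper warns against (``this branching is not multiplicity free and rather similar to the two-step branchings for the general linear groups'').

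The miscount is fatal as written. Take $m=2$, $n=1$, $F=(2)$, $D=(1)$, which lies in the stable range: your formula has no intermediate diagrams at all, so it returns $1$ (the single chain $F\sqsupseteq D$), whereas $m(\tau_{2}^{D},\tau_{4}^{F})=2$, the two witnesses being $E=(1)$ and $E=(2)$, and correspondingly $\#\mathcal{P}_{4}^{2}(F,D)=2$. For the same reason your dictum ``each $E_i$ records the $i$-th row of a candidate array over $\Gamma_{2m}^{2n}$'' cannot hold: you have roughly half as many rows as the poset does. The repair is straightforward and leaves the remainder of your argument intact: set up the iteration with the alternating chain displayed above; your stable-range argument then kills the only constraints $\ell(D_k)\leq k$ that distinguish the symplectic count from the count of unrestricted interlacing chains of $2(m-n)+1$ rows; the latter is by definition $\#\mathcal{P}_{2m}^{2n}(F,D)$; and Proposition \ref{bijection YT-GT} gives the equality with $\#\mathcal{T}_{2m}^{2n}(F,D)$.
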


We call the affine semigroup $\mathcal{P}_{2m,n}^{2n}$, defined in 
(\ref{GT-Patterns}), the \textit{semigroup for} $({Sp}_{2m},{Sp}_{2n})$ and call
its associated semigroup algebra $\mathbb{C}[\mathcal{P}_{2m,n}^{2n}]$ the 
\textit{semigroup algebra for} $({Sp}_{2m},{Sp}_{2n})$. Then it is graded by 
$\Lambda _{n,n}$ defined in \S \ref{grading_GL}.
\begin{equation*}
\mathbb{C}[\mathcal{P}_{2m,n}^{2n}]=\bigoplus_{(F,D)\in \Lambda _{n,n}}
\mathbb{C}[\mathcal{P}_{2m}^{2n}]_{(F,D)}
\end{equation*}

\subsection{}\label{Sp-distlattice}

To define tableaux and standard monomials for the symplectic groups,
we shall use the following ordered letters:
\begin{equation}\label{Sp-alphabet}
\langle 2m \rangle = \left\{ u_1 < v_1 < u_2 < v_2 < \cdots < u_m < v_m \right\}.
\end{equation}

If we let $\mathcal{L}\langle 2m \rangle$ denote the set of all non-empty subsets $J$ of 
$\langle 2m \rangle$, then on $\mathcal{L}\langle 2m \rangle$ we can impose the tableau 
order $\preceq$, as it is done in \S \ref{Sec_taborder} for $\mathcal{L}_{2m}$, through 
the bijection 
\begin{equation}\label{iota}
\iota(u_c)= 2c-1 \hbox{ and } \iota(v_c)=2c
\end{equation}
for $1 \leq c \leq m$. Then $\mathcal{L}\langle 2m \rangle$ is a distributive lattice 
isomorphic to $\mathcal{L}_{2m}$. 

\medskip

For $m > n$, we consider the subposet 
$\mathcal{L}\langle n, 2m \rangle$ of $\mathcal{L}\langle 2m \rangle$ with all 
the elements $J \subset \langle 2m \rangle$ of the forms 
\begin{eqnarray}\label{L_Sp_elements}
&&[u_1,u_2,\cdots ,u_c,y_{1},y_{2}, \cdots , y_{s}], \\
&&[u_1,u_2,\cdots ,u_c],  \notag \\
&&[y_{1},y_{2},\cdots ,y_{s}] \notag
\end{eqnarray}%
where $c \leq n$ and $u_{n+1} \leq y_1 < y_2 < \cdots < y_s \leq v_m$. In particular,
if $u_c \in J$ for $c \leq n$, then $\{ u_h: 1 \leq h \leq c \} \subset J$.

\smallskip

Now, for $k \leq n$, let $\mathcal{L}\langle n, 2m \rangle_k$ be the subposet of
$\mathcal{L}\langle n, 2m \rangle$ consisting of $J \in \mathcal{L}\langle n, 2m \rangle$
with $|J| \leq k$. Then, through the map (\ref{iota}), it is straightforward to see that 
$\mathcal{L}\langle n, 2m \rangle_k$ is isomorphic to
$\mathcal{L}_{2m-n,k}^{n}$ given in Definition \ref{columnDL}, and therefore 
isomorphic to $\mathcal{L}_{2m, k}^{2n}$ by Corollary \ref{L-conversion}.

\begin{definition}
\begin{enumerate}
\item The distributive lattice for $(Sp_{2m}, Sp_{2n})$ is 
\begin{eqnarray*}
\mathcal{L}_{Sp} &=& \mathcal{L}\langle n, 2m \rangle_n \\
&\cong& \mathcal{L}_{2m, n}^{2n}
\end{eqnarray*}
\item The Hibi algebra for $(Sp_{2m}, Sp_{2n})$, denoted by $\mathcal{H}_{Sp}$, 
is the Hibi algebra over the distributive lattice $\mathcal{L}_{Sp}$.
\end{enumerate}
\end{definition}

Note that from $\mathcal{L}_{Sp} \cong \mathcal{L}_{2m, n}^{2n}$, 
the Hibi algebra $\mathcal{H}_{Sp}$ for $(Sp_{2m}, Sp_{2n})$ is isomorphic to 
$\mathcal{H}^{2n}_{2m,n}$. Then from Proposition \ref{DL-GT} for $(GL_{2m}, GL_{2n})$
we have

\begin{corollary}\label{Sp_Hibi_iso}
The Hibi algebra for $(Sp_{2m}, Sp_{2n})$ is isomorphic to 
the semigroup algebra for $(Sp_{2m}, Sp_{2n})$:
\begin{equation*}
\mathcal{H}_{Sp} \cong \mathbb{C}[\mathcal{P}_{2m,n}^{2n}]
\end{equation*}
\end{corollary}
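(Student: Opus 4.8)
The plan is to obtain Corollary \ref{Sp_Hibi_iso} by composing isomorphisms already established in the excerpt, so that essentially no new computation is required. The target identity is $\mathcal{H}_{Sp} \cong \mathbb{C}[\mathcal{P}_{2m,n}^{2n}]$. By definition, $\mathcal{H}_{Sp}$ is the Hibi algebra over $\mathcal{L}_{Sp} = \mathcal{L}\langle n, 2m\rangle_n$, and immediately preceding the statement we recorded the chain of lattice isomorphisms $\mathcal{L}_{Sp} = \mathcal{L}\langle n, 2m\rangle_n \cong \mathcal{L}_{2m-n,n}^{n} \cong \mathcal{L}_{2m,n}^{2n}$, where the first isomorphism comes from the map $\iota$ in (\ref{iota}) and the second is Corollary \ref{L-conversion} (with $k=n$ and $d=n$). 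Since the Hibi algebra is a functor of the underlying distributive lattice, any order isomorphism of lattices induces an isomorphism of the corresponding Hibi algebras; hence $\mathcal{H}_{Sp} \cong \mathcal{H}(\mathcal{L}_{2m,n}^{2n}) = \mathcal{H}_{2m,n}^{2n}$.

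First I would spell out why a lattice isomorphism yields a Hibi algebra isomorphism: an order isomorphism $\varphi\colon L \to L'$ is automatically a lattice isomorphism, so it carries meets to meets and joins to joins, and therefore the induced ring map $z_\gamma \mapsto z_{\varphi(\gamma)}$ sends each Hibi relation $z_\alpha z_\beta - z_{\alpha\wedge\beta}z_{\alpha\vee\beta}$ to the corresponding relation $z_{\varphi(\alpha)}z_{\varphi(\beta)} - z_{\varphi(\alpha)\wedge\varphi(\beta)}z_{\varphi(\alpha)\vee\varphi(\beta)}$, giving a well-defined isomorphism of quotient rings. This is precisely the observation already used implicitly in passing from $\mathcal{L}_{Sp}$ to $\mathcal{L}_{2m,n}^{2n}$, so I would state it once and apply it. After this step the problem is reduced to identifying $\mathcal{H}_{2m,n}^{2n}$ with the semigroup algebra of the GT patterns for $(Sp_{2m}, Sp_{2n})$.

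The second ingredient is Proposition \ref{DL-GT}(1) applied to the pair $(GL_{2m}, GL_{2n})$, which gives $\mathcal{H}_{2m,n}^{2n} \cong \mathbb{C}[\mathcal{P}_{2m,n}^{2n}]$. By the definitions in \S\ref{Sp-distlattice}, the semigroup algebra for $(Sp_{2m}, Sp_{2n})$ \emph{is} $\mathbb{C}[\mathcal{P}_{2m,n}^{2n}]$, so composing the two displayed isomorphisms yields $\mathcal{H}_{Sp} \cong \mathcal{H}_{2m,n}^{2n} \cong \mathbb{C}[\mathcal{P}_{2m,n}^{2n}]$, which is the claim. I would also remark that these isomorphisms are graded: the lattice isomorphisms preserve lengths $|J|$ and the residue classes modulo the shifting data, so they respect the $\Lambda_{n,n}$-grading by pairs of shapes $(F,D)$ on both sides, matching the gradings recorded just before the statement.

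The only genuine content to verify is the claim made (but not fully proved) in \S\ref{Sp-distlattice} that $\iota$ induces an order isomorphism $\mathcal{L}\langle n, 2m\rangle_n \cong \mathcal{L}_{2m-n,n}^{n}$; since Corollary \ref{Sp_Hibi_iso} is allowed to assume that claim, the proof proper is just the composition above and is essentially formal. The main obstacle, therefore, is not in the proof of the corollary itself but in being careful that the two lattice isomorphisms are order isomorphisms (hence lattice isomorphisms), so that functoriality of $\mathcal{H}(-)$ legitimately transfers the Hibi relations; once that is clear, the corollary follows by chaining the established isomorphisms and invoking Proposition \ref{DL-GT}(1).
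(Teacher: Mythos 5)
Your proposal is correct and follows essentially the same route as the paper: the paper derives the corollary exactly by noting $\mathcal{L}_{Sp} \cong \mathcal{L}_{2m,n}^{2n}$ (via the map $\iota$ and Corollary \ref{L-conversion}), concluding $\mathcal{H}_{Sp} \cong \mathcal{H}_{2m,n}^{2n}$, and then invoking Proposition \ref{DL-GT} for $(GL_{2m},GL_{2n})$. Your explicit verification that an order isomorphism of distributive lattices induces an isomorphism of Hibi algebras, and that the gradings match, merely fills in details the paper leaves implicit (the paper also later sketches, in \S\ref{order increasing SP}, a second ``direct'' proof via characteristic functions on order increasing subsets, which you did not need).
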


\subsection{}
Next, we define standard tableaux for $(Sp_{2m}, Sp_{2n})$.

\begin{definition}\label{BrSp_standardtab}
\begin{enumerate}
\item A standard tableau $\mathsf{t}$ for $({Sp}_{2m},{Sp}_{2n})$ is a multiple
chain  in $\mathcal{L}_{Sp}$: $$\mathsf{t}=\left( I_{1}\preceq \cdots \preceq I_{s} \right).$$ 
\item The shape $sh_{n}(\mathsf{t})$ of a standard tableau $\mathsf{t}$ for 
$({Sp}_{2m},{Sp}_{2n})$ is $F/D$ where 
$$F=(|I_{1}|,\cdots ,|I_{s}|)^{t} \hbox{ and } D=(d_{1},\cdots ,d_{n})$$
with $d_{r}$ being the number of $u_h$'s in $\mathsf{t}$ for $1\leq h\leq n$.
\end{enumerate}
\end{definition}

We write $\mathcal{T}_{Sp}(F,D)$ for the set of all standard
tableaux for $({Sp}_{2m}, {Sp}_{2n})$ whose shapes are $F/D$, and consider
the disjoint union
\begin{equation*}
\mathcal{T}_{Sp}=\bigcup_{(F,D)\in \Lambda _{n,n}}\mathcal{T}_{Sp}(F,D)
\end{equation*}
over $\Lambda _{n,n}$. Then as in the case of the general linear groups, $\mathcal{T}_{Sp}$ 
gives rise to a $\mathbb{C}$-basis for the Hibi algebra for $({Sp}_{2m}, {Sp}_{2n})$.
As in \S \ref{Hibi_mkn}, we shall identify monomials in the Hibi algebra $\mathcal{H}_{Sp}$ 
with tableaux whose columns are elements of $\mathcal{L}_{Sp}$.

\begin{proposition}\label{Sp_Hibi} 
\begin{enumerate}
\item The Hibi algebra $\mathcal{H}_{Sp}$ for $(Sp_{2m}, Sp_{2n})$
is graded by $\Lambda _{n,n}$,  and for each $(F,D) \in \Lambda _{n,n}$, 
$\mathcal{T}_{Sp}(F,D)$ forms a $\mathbb{C}$-basis for the graded component 
$\mathcal{H}_{Sp}(F,D)$ of $\mathcal{H}_{Sp}$.

\item The number of standard tableaux for $(Sp_{2m}, Sp_{2n})$ of shape $F/D$
is equal to the branching multiplicity $m(\tau^D_{2n}, \tau^F_{2m})$
of $\tau^D_{2n}$ in  $\tau^F_{2m}$.
\end{enumerate}
\begin{proof}
From the isomorphism $\mathcal{L}_{Sp} \cong \mathcal{L}^{2n}_{2m,n}$, 
we can easily see that there is a bijection 
between $\mathcal{T}_{Sp}(F,D)$ and $\mathcal{T}^{2n}_{2m} (F,D)$.
Then (1) follows from Lemma \ref{Hibi-basis} and
(2) follows from Proposition \ref{Sp_counting}
\end{proof}
\end{proposition}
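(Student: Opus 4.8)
The plan is to prove both statements by transporting them across the distributive-lattice isomorphism $\mathcal{L}_{Sp} \cong \mathcal{L}_{2m,n}^{2n}$ recorded in the definition of $\mathcal{L}_{Sp}$, thereby reducing everything to the already-settled $(GL_{2m}, GL_{2n})$ case. First I would note that this lattice isomorphism induces an isomorphism of Hibi algebras $\mathcal{H}_{Sp} \cong \mathcal{H}_{2m,n}^{2n}$, since the generators $z_{\gamma}$ and the relations $z_{\alpha}z_{\beta} - z_{\alpha \wedge \beta}z_{\alpha \vee \beta}$ depend only on the underlying lattice; this is exactly Corollary \ref{Sp_Hibi_iso}. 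Because a standard tableau for $(Sp_{2m}, Sp_{2n})$ is by Definition \ref{BrSp_standardtab} nothing but a multiple chain in $\mathcal{L}_{Sp}$, the isomorphism carries $\mathcal{T}_{Sp}$ bijectively onto the multiple chains in $\mathcal{L}_{2m,n}^{2n}$, that is, onto $\mathcal{T}_{2m}^{2n}$. Call this bijection $\phi$.

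The crux is to check that $\phi$ is \emph{shape-preserving}, and I expect this bookkeeping to be the only real obstacle. For the outer shape I would observe that both constituents of the isomorphism --- the relabeling $\iota$ and the row-shift of Corollary \ref{L-conversion} --- preserve the cardinality $|I|$ of each lattice element ($\iota$ is a bijection of the alphabet, and the shift carries the top row of the Gelfand--Tsetlin poset to the top row, leaving the count that realizes $|I|$ unchanged), so the outer shape $F=(|I_{1}|,\cdots,|I_{s}|)^{t}$ is preserved. For the inner shape I would use the structural description (\ref{L_Sp_elements}): every element of $\mathcal{L}_{Sp}$ carries a solid $u$-prefix $[u_{1},\cdots,u_{c}]$, and $u_{r}$ occurs in a column exactly when that prefix has length at least $r$; under the isomorphism this $u$-prefix length matches the length of the initial block $[1,2,\cdots,r']$ of the corresponding element of $\mathcal{L}_{2m,n}^{2n}$. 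Hence the Sp datum $d_{r}=\#\{u_{r}\text{ in }\mathsf{t}\}$ equals the GL datum $\#\{r\text{ in }\phi(\mathsf{t})\}$ for each $r\leq n$, so $D$ is preserved as well. I would also record that the grading sets coincide in the relevant range: $F\supseteq D$ with $\ell(F)\leq n$ forces $\ell(D)\leq n$, whence $\Lambda_{n,2n}=\Lambda_{n,n}$, and every element of $\mathcal{T}_{2m}^{2n}(F,D)$ with $(F,D)\in\Lambda_{n,n}$ automatically avoids the letters $n+1,\cdots,2n$ and so lies in the image. This lets $\phi$ restrict to a bijection $\mathcal{T}_{Sp}(F,D)\cong\mathcal{T}_{2m}^{2n}(F,D)$ for every $(F,D)\in\Lambda_{n,n}$.

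With the shape-preserving bijection in hand, both assertions follow immediately. For (1), Lemma \ref{Hibi-basis} gives the grading of $\mathcal{H}_{2m,n}^{2n}$ by $\Lambda_{n,2n}=\Lambda_{n,n}$ together with the standard-tableau basis $\mathcal{T}_{2m}^{2n}(F,D)$ of each graded component; pulling this back along $\mathcal{H}_{Sp}\cong\mathcal{H}_{2m,n}^{2n}$ and using that monomials in $\mathcal{H}_{Sp}$ correspond to multiple chains shows that $\mathcal{H}_{Sp}$ is graded by $\Lambda_{n,n}$ with $\mathcal{T}_{Sp}(F,D)$ a basis of $\mathcal{H}_{Sp}(F,D)$. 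For (2), the bijection yields $|\mathcal{T}_{Sp}(F,D)|=|\mathcal{T}_{2m}^{2n}(F,D)|$, and Proposition \ref{Sp_counting} identifies the latter with the branching multiplicity $m(\tau_{2n}^{D},\tau_{2m}^{F})$. The substance of the argument is thus concentrated entirely in the shape-matching verification of the middle paragraph.
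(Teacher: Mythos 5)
Your proposal is correct and follows essentially the same route as the paper: transport everything through the lattice isomorphism $\mathcal{L}_{Sp} \cong \mathcal{L}_{2m,n}^{2n}$ to get a shape-preserving bijection $\mathcal{T}_{Sp}(F,D) \cong \mathcal{T}_{2m}^{2n}(F,D)$, then invoke Lemma \ref{Hibi-basis} for (1) and Proposition \ref{Sp_counting} for (2). Your middle paragraph simply makes explicit the shape-matching bookkeeping that the paper compresses into ``we can easily see,'' which is a welcome elaboration rather than a deviation.
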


\subsection{}\label{order increasing SP}

We remark that every standard tableau for $(Sp_{2m}, Sp_{2n})$ of shape $F/D$
can be realized as a skew semistandard tableau of shape $F/D$ having
entries from $\{u_{n+1}, v_{n+1}, \cdots , u_m, v_m \}$.
For example, for $m=10$ and $n=6$, the standard tableau
of shape $F=(6,5,3,0,0)$ and $D=(4,3,1)$ 
$$[u_1,u_2, u_3] \preceq [u_1,u_2,v_4] \preceq [u_1,u_2,v_4] 
\preceq [u_1,u_4] \preceq [v_4,u_5] \preceq [u_5]$$
in $\mathcal{L}_{Sp} = \mathcal{L}\langle 3, 10 \rangle_3$ can be
identified with the skew semistandard tableau
\begin{equation*}
\begin{Young}
 \  & \   & \    & \     & $v_4$ & $u_5$ \cr
 \  & \   & \   & $u_4$ & $u_5$ \cr
 \  & $v_4$ & $v_4$  \cr
\end{Young}
\end{equation*}
where the empty boxes in $h$-th row are considered as the ones with $u_h$ 
for $1 \leq h \leq n$.

\smallskip

We also remark that, as it is shown in Proposition \ref{joinirred2}, we can attach
an order increasing subset $A_{I}$ of $\Gamma _{2m,n}^{2n}$ to each $I\in 
\mathcal{L}_{Sp}$:
\begin{equation}
A_{I} = \bigcup_{2n \leq j \leq 2m} A_I ^{(j)}
\end{equation}
where $ A_I ^{(j)} \subset \Gamma_{2m,n}^{2n}$ are defined as
\begin{eqnarray*} 
A_{I}^{(2i-1)} &=& \left\{ x_{1}^{(2i-1)}, x_{2}^{(2i-1)}, \cdots, x_{s_{i}}^{(2i-1)} \right\}, \\
A_{I}^{(2i)} &=& \left\{ x_{1}^{(2i)}, x_{2}^{(2i)}, \cdots, x_{t_{i}}^{(2i)} \right\}.
\end{eqnarray*}
Here $s_i$ and $t_i$ are the numbers of elements
in $I$ less than or equal to $u_i$ and $v_i$ respectively.
Then we can relate every element of $\mathcal{T}_{Sp}$ to a sum of 
characteristic functions over these order increasing subsets as given in
Proposition \ref{bijection YT-GT} and (\ref{procedure2}).
This gives a direct proof for Corollary \ref{Sp_Hibi_iso}.

\subsection{}

Now we want to lift the elements of the Hibi algebra $\mathcal{H}_{Sp}$
to construct the stable range branching algebra for $({Sp}_{2m}, {Sp}_{2n})$. 
For this purpose, we briefly review the polynomial model of ${Sp}_{2m}$-representation 
spaces studied in \cite{Ki08}.

\smallskip

From (\ref{grading1}), as a ${GL}_{2m}$ module, $\mathbb{C}[\mathsf{M}_{2m,m}]^{U_{m}}$ 
decomposes into irreducible representations $\rho_{2m}^{F} $ for $\ell (F)\leq m$. 
By taking ${Sp}_{2m}$ as a subgroup of ${GL}_{2m}$, ${Sp}_{2m}\times {GL}_{m}$ is 
acting on the space $\mathsf{M}_{2m,m} \cong \mathbb{C}^{2m} \otimes \mathbb{C}^m$ as 
the action of ${GL}_{2m}\times {GL}_{m}$ given in (\ref{doubleaction}).
 
Then we take the quotient of $\mathbb{C}[\mathsf{M}_{2m,m}]^{U_{m}}$ by
the ideal $\mathcal{I}_{{Sp}}=\sum_{F}\mathcal{I}^{F}$ where 
$\mathcal{I}^{F}$ is the complement space to $\tau _{2m}^{F}$ in 
$\rho _{2m}^{F}$, i.e., $\rho _{2m}^{F}=\tau _{2m}^{F}\oplus \mathcal{I}^{F}$ 
for each $F$ (cf. \cite[\S 17.3]{FH91}). Then this quotient algebra can be taken 
as a polynomial model of 
the flag algebra for ${Sp}_{2m}$ in that it contains exactly one copy of every 
irreducible representation $\tau _{2m}^{F}$:
\begin{eqnarray*}
\mathcal{F}_{{Sp}} &=&\mathbb{C}[\mathsf{M}_{2m,m}]^{U_{m}}/\mathcal{I}_{{Sp}} \\
&=&\sum_{\ell (F)\leq m}\tau _{2m}^{F}
\end{eqnarray*}

Moreover, this decomposition is compatible with the graded structure of the
algebra, i.e., $\tau _{2m}^{F_1}\cdot \tau _{2m}^{F_2}\subset 
\tau_{2m}^{F_1 + F_2}$. Therefore, for the stable range $\ell (F)\leq n$, 
we can consider its subalgebra consisting of $\tau _{2m}^{F}$ with 
$\ell(F)\leq n$:
\begin{equation}
\mathcal{F}_{{Sp}}^{(n)}=\sum_{\ell (F)\leq n}\tau _{2m}^{F}  \label{Sp-flag}
\end{equation}
%

\subsection{}

To describe generators of $\mathcal{F}_{Sp}$, to each 
$I=[w_{1},\cdots ,w_{r}]\in \mathcal{L}\langle {2m} \rangle$ with $r \leq m$, we attach a
determinant function $\delta _{I'}$\ as follows. For 
$Q \in \mathsf{M}_{2m,m}$, we let 
${\delta} _{I'}(Q)$ denote the determinant of the submatrix of $Q=(t_{a,b})$ 
obtained by taking the $i_{1}^{\prime} , i_{2}^{\prime} , \cdots , i_{r}^{\prime} $-th rows 
and the $1,2,\cdots ,r$-th columns:
\begin{equation}\label{SP_determinant}
{\delta} _{I'}(Q)=\det 
\begin{bmatrix}
t_{i_{1}^{\prime} 1} & t_{i_{1}^{\prime}  2} & \cdots & t_{i_{1}^{\prime}  r}
\\ 
t_{i_{2}^{\prime}  1} & t_{i_{2}^{\prime}  2} & \cdots & t_{i_{2}^{\prime} r}
\\ 
\vdots & \vdots & \ddots & \vdots \\ 
t_{i_{r}^{\prime}  1} & t_{i_{r}^{\prime}  2} & \cdots & t_{i_{r}^{\prime}  r}%
\end{bmatrix}
\end{equation}
where $\{ i_{1}^{\prime} , i_{2}^{\prime} , \cdots , i_{r}^{\prime} \}$ is the image of 
the set $\{ w_1, w_2, \cdots, w_r \} \subset \langle 2m \rangle$ under
\begin{eqnarray}\label{f-bijection}
\psi:\{u_1, v_1 ,\cdots, u_m, v_m \} \rightarrow \{1, 2, \cdots, 2m \} \\
\psi(u_c)=c \hbox{ \ and \ } \psi(v_c)=2m+1-c \notag
\end{eqnarray}
for $1 \leq c \leq m$. 

\smallskip

This conversion procedure is to make the labeling $(u_c, v_c)$ of isotropic pairs, 
which are denoted by $(c, \bar{c})$ in \cite{Be86} and $(2c-1, 2c)$ in \cite{Ki08}, 
compatible with ours $(c, 2m+1-c)$ for the skew symmetric form defined in 
\S \ref{sp-section1}.

\begin{notation}\label{Sp-notation}
To avoid a possible ambiguity, we impose  
a new total order $\lessdot$ on $\{1, 2, \cdots, 2m \}$ induced by $\psi$ in (\ref{f-bijection})
and the order of $\langle 2m \rangle$ given in (\ref{Sp-alphabet}):
\begin{equation*}
 1 \lessdot 2m \lessdot 2 \lessdot 2m-1 \lessdot \cdots \lessdot m \lessdot m+1
\end{equation*}
\begin{enumerate}
\item To emphasize the order $\lessdot$, we shall use the prime symbol as in 
$i_{j}^{\prime}$ for the elements $i_j$ of $\{1, 2, \cdots, 2m\}$. 

\item Then, in the determinant (\ref{SP_determinant}), we may further assume that 
$$i_{1}^{\prime} \lessdot i_{2}^{\prime} \lessdot \cdots \lessdot i_{r}^{\prime}$$
to fix the sign of the determinant.

\item We also let $I'$ denote the image of $I \in \mathcal{L}\langle 2m \rangle$ 
under $\psi$. Similarly, we let $\mathsf{t}'$ denote the multiple chain 
$(I'_1 \preceq I'_2 \preceq \cdots \preceq I'_c)$ corresponding to the multiple chain 
$\mathsf{t}=(I_1 \preceq I_2 \preceq \cdots \preceq I_c)$ in $\mathcal{L}\langle 2m \rangle$.
\end{enumerate}
\end{notation}

\medskip

For the flag algebra $\mathcal{F}_{Sp}$, we are interested in ${\delta}_{I'}$ with 
$I \in \mathcal{L}\langle 2m \rangle$ whose $h$-th smallest entry is not less than 
$u_h$ for all $h \geq 0$.

\begin{definition}[{\protect\cite{Be86, Ki08}}]\label{Sp-stm}
Fix the element $J_{0}=[u_1,u_2,\cdots ,u_m]\in \mathcal{L}\langle 2m \rangle$ of
length $m$. For a multiple chain 
$\mathsf{t}=(I_{1}\preceq I_{2}\preceq \cdots \preceq I_{c})$ of 
$\mathcal{L}\langle 2m \rangle$, its associated monomial
\begin{equation*}
{\Delta} _{\mathsf{t}'}={\delta} _{I'_{1}} {\delta} _{I'_{2}}\cdots {\delta} _{I'_{c}}\in 
\mathbb{C}[\mathsf{M}_{2m,m}]^{U_{m}}
\end{equation*}
is called a ${Sp}$-\textit{standard monomial}, if $I_{s}\succeq J_{0}$ for all $s$. 
\end{definition}

\medskip

\subsection{}\label{straightening2}
The ideal $\mathcal{I}_{{Sp}}$ is finitely generated. 
Using the elements of  $\mathcal{I}_{{Sp}}$ (cf. \cite[\S 17.3]{FH91}) combined 
with standard monomial theory 
of $\mathbb{C}[\mathsf{M}_{2m,m}]^{U_m}$,  \cite{Ki08} shows that 
$Sp$-standard monomials project to $\mathbb{C}$-basis elements of the quotient algebra
$\mathcal{F}_{Sp}$, and that they are compatible with the graded structure of the algebra.

\smallskip

To a product of ${\delta}_{I'}$'s, as an element of $\mathbb{C}[\mathsf{M}_{2m,m}]^{U_m}$, apply 
the straightening relations in Proposition \ref{straightening1} to obtain a linear combination of 
standard monomials for $GL_{2m}$:
$$\prod_i {\delta} _{{I'}_i}=\sum_{r} c_{r} \prod_{j \geq 1} {\delta} _{K'_{r,j}}$$
If there is a non-zero term $\prod_j {\delta} _{K'_{r,j}}$ which is not a $Sp$-standard monomial,
then apply relations from the ideal $\mathcal{I}_{{Sp}}$, which replace the entries in $K_{r,j}$'s
corresponding to isotropic pairs $(u_a, v_{a})$ with the sum of entries corresponding to 
$(u_b, v_{b})$ for $a \leq b$, thereby expressing $\prod_{j} {\delta} _{K'_{r,j}}$ 
as a linear combination of $Sp$-standard monomials. For further details, we refer to \cite{Ki08}.  
A combinatorial description of this procedure in the language of tableaux is given in \cite{Be86}.

\begin{proposition}[{\protect\cite[Theorem 5.20]{Ki08}}]\label{Sp_flagSMT}
${Sp}$-standard monomials project to a $\mathbb{C}$-basis
of the flag algebra $\mathcal{F}_{{Sp}}$ for ${Sp}_{2m}$. In particular, for
a Young diagram $F$ with $\ell (F)\leq m$, ${Sp}$-\textit{standard monomials}
of shape $F$ project to a weight basis for the ${Sp}_{2m}$\ irreducible
representation $\tau _{2m}^{F}\subset \mathcal{F}_{{Sp}}$.
\end{proposition}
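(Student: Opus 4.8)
The plan is to work shape by shape inside the $GL_{2m}$-decomposition and to reduce the statement to a spanning claim together with a dimension count. Recall from \eqref{Sp-flag} and the discussion preceding it that $\mathcal{F}_{Sp}=\bigoplus_{\ell(F)\leq m}\tau_{2m}^{F}$ is obtained from $\mathbb{C}[\mathsf{M}_{2m,m}]^{U_{m}}=\bigoplus_{\ell(F)\leq m}\rho_{2m}^{F}$ by passing to the quotient by $\mathcal{I}_{Sp}=\sum_{F}\mathcal{I}^{F}$, where $\mathcal{I}^{F}$ is the $Sp_{2m}$-stable complement of $\tau_{2m}^{F}$ in $\rho_{2m}^{F}$; since $\mathcal{I}_{Sp}$ is graded by $F$, the image of the $F$-component is $\tau_{2m}^{F}=\rho_{2m}^{F}/\mathcal{I}^{F}$. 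By the standard monomial basis theorem for $\mathbb{C}[\mathsf{M}_{2m,m}]^{U_{m}}$, the $GL_{2m}$-standard monomials $\Delta_{\mathsf{t}'}$ of shape $F$ already form a weight basis of $\rho_{2m}^{F}$, and each $\delta_{I'}$ is a weight vector for the maximal torus of $Sp_{2m}\subset GL_{2m}$. Hence it suffices to prove, for every $F$ with $\ell(F)\leq m$, that the images of the $Sp$-standard monomials of shape $F$ (Definition \ref{Sp-stm}) span $\tau_{2m}^{F}$ and are exactly $\dim\tau_{2m}^{F}$ in number: a spanning set of the correct cardinality is a basis, and being built from torus weight vectors it is automatically a weight basis.

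First I would establish spanning, following the two-step procedure of \S\ref{straightening2}. Any product $\prod_{i}\delta_{I'_{i}}$ is first rewritten by the $GL_{2m}$ straightening relations of Proposition \ref{straightening1} as a linear combination of $GL_{2m}$-standard monomials of the same shape $F$. For a term whose columns $K_{r,j}$ are not all $Sp$-standard (some $K_{r,j}\not\succeq J_{0}$), I would apply the relations coming from the generators of $\mathcal{I}_{Sp}$, which, modulo $\mathcal{I}^{F}$, trade entries indexed by an isotropic pair $(u_{a},v_{a})$ for entries indexed by pairs $(u_{b},v_{b})$ with $b\geq a$. These relations preserve the shape $F$ and express the offending monomial through monomials that are strictly larger in a monomial order tracking the isotropic indices, so repeated application terminates and leaves a linear combination of $Sp$-standard monomials of shape $F$. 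Since both steps preserve the shape, the images of the $Sp$-standard monomials of shape $F$ span $\tau_{2m}^{F}$.

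It then remains to count. The condition $I_{s}\succeq J_{0}$ of Definition \ref{Sp-stm} says precisely that the $h$-th smallest entry of every column $I_{s}$ is at least $u_{h}$, which, after passing to the skew semistandard fillings described in \S\ref{order increasing SP}, is exactly the symplectic semistandardness condition on tableaux in the alphabet $\langle 2m\rangle$. Thus the $Sp$-standard monomials of shape $F$ are in bijection with the symplectic tableaux of shape $F$, whose number equals $\dim\tau_{2m}^{F}$ by the combinatorial model of \cite{Be86}. Combined with the spanning proved above, this forces linear independence and yields a basis of $\tau_{2m}^{F}$, and the weight-vector property of each $\delta_{I'}$ upgrades it to a weight basis; summing over $F$ gives the basis of $\mathcal{F}_{Sp}$.

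The main obstacle is the middle step: verifying that the explicit generators of $\mathcal{I}_{Sp}$ (cf.\ \cite[\S17.3]{FH91}) genuinely suffice to straighten every non-$Sp$-standard $GL_{2m}$-standard monomial, and that the resulting rewriting terminates. Making this rigorous requires exhibiting a monomial order that strictly increases under the symplectic relations while remaining compatible with the $GL_{2m}$ straightening of Proposition \ref{straightening1}; the combinatorial bookkeeping behind this compatibility, carried out in the tableau language of \cite{Be86}, is the technical heart of \cite[Theorem 5.20]{Ki08}.
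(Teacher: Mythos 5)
Your proposal is correct and takes essentially the same route as the paper: the paper itself does not reprove this statement but imports it from \cite[Theorem 5.20]{Ki08}, and its surrounding discussion in \S\ref{straightening2} sketches exactly your strategy --- spanning via the two-step rewriting ($GL_{2m}$ straightening from Proposition \ref{straightening1} followed by relations from $\mathcal{I}_{Sp}$ that trade isotropic pairs $(u_a,v_a)$ for pairs $(u_b,v_b)$, $b\geq a$), combined with the identification of $Sp$-standard monomials of shape $F$ with King--Berele symplectic tableaux counting $\dim\tau_{2m}^{F}$. You also correctly locate the one step neither you nor this paper carries out --- termination and sufficiency of the symplectic rewriting --- as precisely the technical content of the cited \cite[Theorem 5.20]{Ki08}.
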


We also note that, from the graded structure 
$\tau_{2m}^{F_1} \cdot \tau_{2m}^{F_2} \subset \tau_{2m}^{F_1 + F_2}$ of $\mathcal{F}_{Sp}$, 
in order to obtain the subalgebra $\mathcal{F}^{(n)}_{Sp}$ in (\ref{Sp-flag}), it is enough to 
consider ${\delta}_{I'}$'s with $I\in \mathcal{L}\langle 2m \rangle$ and $|I| \leq n$.

\subsection{}\label{Sp-U-invariants} 

From (\ref{general branching algebra}) and (\ref{Sp-flag}), we want to find an explicit 
model for the $U_{{Sp}_{2n}}$-invariant subalgebra of $\mathcal{F}_{{Sp}}^{(n)}$:
\begin{eqnarray*}
\left( \mathcal{F}_{{Sp}}^{(n)}\right) ^{U_{{Sp}_{2n}}} &=&\sum_{\ell (F)\leq
n}\left( \tau _{2m}^{F}\right) ^{U_{{Sp}_{2n}}}  \\
&=&\sum_{\ell (F)\leq n}\sum_{D}m(\tau _{2n}^{D},\tau _{2m}^{F})\left( \tau
_{2n}^{D}\right) ^{U_{{Sp}_{2n}}}  \notag
\end{eqnarray*}

\begin{theorem}\label{B-Sp_standardmonomialtheory}
The subalgebra $\mathcal{B}_{Sp}$ of $\mathcal{F}_{{Sp}}$ generated by 
$$ \left\{{\delta}_{I'}+ \mathcal{I}_{Sp}: I \in \mathcal{L}_{Sp} \right\}$$ is
graded by $\Lambda _{n,n}$; and for each $(F,D)\in \Lambda _{n,n}$ the
$Sp$ standard monomials ${\Delta}_{\mathsf{t}'}$  
corresponding to standard tableaux $\mathsf{t}$ for $({Sp}_{2m},{Sp}_{2n})$
whose shapes are $F/D$ form a 
$\mathbb{C}$-basis of the $(F,D)$-graded component. The dimension of the 
$(F,D)$-graded component is equal to the branching multiplicity of 
$\tau_{2n}^{D}$ in $\tau _{2m}^{F}$.

\begin{proof}
Recall that, for $I \in \mathcal{L}_{Sp} \subset \mathcal{L}\langle 2m \rangle$, we defined  
the polynomial ${\delta}_{I'}$ on the space $\mathsf{M}_{2m,m}$ in (\ref{SP_determinant}). 
By (\ref{L_Sp_elements}) and (\ref{f-bijection}), it is the determinant of a submatrix of 
$Q \in \mathsf{M}_{2m,m}$
obtained by taking consecutive columns $\{1, 2, \cdots, |I| \}$ and either consecutive rows 
$\{1, 2, \cdots, r \}$ or partially consecutive rows $\{1, 2, \cdots, r \} \cup 
\{b_1, \cdots, b_s \}$ or only $\{b_1, \cdots, b_s \}$ of $Q$ for $r \leq n$ and 
$b_i \in \{n+1, n+2, \cdots, 2m-n \}$ for all $i$.

Since the left action of $U_{2n} \subset GL_{2m} $ under the embedding (\ref{Sp  embedding}) 
operates the rows of $\mathsf{M}_{2m,m}$, all the elements ${\delta}_{I'}$ for $I\in \mathcal{L}_{Sp}$ are
invariant under the action of $U_{2n}$ and therefor invariant under the action of $U_{Sp_{2n}}$.
Since the ideal $\mathcal{I}_{Sp}$ is stable under the action of $Sp_{2m}$, the generators of the
algebra $\mathcal{B}_{Sp}$ are invariant under the unipotent subgroup $U_{Sp_{2n}}$ of $Sp_{2n}$, 
and so are their products. Also, since every $I \in \mathcal{L}_{Sp}$ satisfies $|I| \leq n$, 
we have
$
\mathcal{B}_{Sp} \subseteq \left( \mathcal{F}_{{Sp}}^{(n)}\right) ^{U_{{Sp}_{2n}}}.
$

On the other hand, every element in $\mathcal{L}_{Sp}$ is greater than 
$J_{0}=[u_1,u_2,\cdots ,u_m]$ with respect to the tableau order, and therefore standard tableaux 
$\mathsf{t}$ for $({Sp}_{2m},{Sp}_{2n})$ (Definition \ref{BrSp_standardtab} and 
Proposition \ref{Sp_flagSMT}) give rise to ${Sp}$-standard monomials $\Delta_{\mathsf{t}'}$ 
(Definition \ref{Sp-stm}) for $\mathcal{F}_{{Sp}}$. That is, $Sp$-standard monomials 
corresponding to standard tableaux for 
$({Sp}_{2m},{Sp}_{2n})$ project to linearly independent elements in the $U_{Sp_{2n}}$-invariant
subalgebra of $\mathcal{F}_{{Sp}}^{(n)}  \subset \mathcal{F}_{{Sp}}$. They span the whole
$U_{Sp_{2n}}$-invariant subalgebra of $\mathcal{F}_{{Sp}}^{(n)}$, because for each 
$(F,D)\in \Lambda _{n,n}$ the number of standard tableaux in $\mathcal{T}_{Sp}(F,D)$ is equal 
to the multiplicity of $\tau _{2n}^{D}$ in $\tau _{2m}^{F}$ by 
Proposition \ref{Sp_counting}. Furthermore, they are scaled by weight $D$ under 
the action of the diagonal subgroup 
$\{diag(a_{1},\cdots ,a_{n},a_{n}^{-1},\cdots ,a_{1}^{-1})\}$ of ${Sp}_{2n}$ as 
given in (\ref{U-invariant-weight}).
Therefore, standard monomials $\Delta _{\mathsf{t}'}$ with 
$\mathsf{t}\in \mathcal{T}_{Sp}(F,D)$ are the highest weight vectors of the
copies of $\tau _{2n}^{D}$ in $\tau _{2m}^{F}$. This shows that
$\mathcal{B}_{Sp} = \left( \mathcal{F}_{{Sp}}^{(n)}\right) ^{U_{{Sp}_{2n}}}$ and
its graded structure.
\end{proof}
\end{theorem}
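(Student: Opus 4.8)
The plan is to establish Theorem \ref{B-Sp_standardmonomialtheory} in three movements: first show every generator $\delta_{I'}$ with $I \in \mathcal{L}_{Sp}$ is $U_{Sp_{2n}}$-invariant, so that $\mathcal{B}_{Sp} \subseteq (\mathcal{F}_{Sp}^{(n)})^{U_{Sp_{2n}}}$; second produce enough linearly independent $Sp$-standard monomials inside $\mathcal{B}_{Sp}$ to account for the full dimension of each graded piece; and third match that dimension count against the known branching multiplicity to force equality. For the first movement I would unwind the definition (\ref{SP_determinant}) of $\delta_{I'}$ through the bijection $\psi$ of (\ref{f-bijection}): since every $I \in \mathcal{L}_{Sp}$ has the special form (\ref{L_Sp_elements}), the rows selected in the determinant are a union of an initial block $\{1,\dots,r\}$ with $r \leq n$ and a tail drawn from $\{n+1,\dots,2m-n\}$. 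The left action of $U_{2n}$ from the embedding (\ref{Sp embedding}) is by upper-unitriangular row operations supported on the first $2n$ rows, and because $\delta_{I'}$ takes the leftmost consecutive columns, any such row operation either fixes the determinant or adds a column-dependent multiple of a row already present, leaving $\delta_{I'}$ unchanged. Hence each generator is $U_{2n}$-invariant, and since $U_{Sp_{2n}} \subset U_{2n}$ this gives $U_{Sp_{2n}}$-invariance; the stability of $\mathcal{I}_{Sp}$ under $Sp_{2m}$ (hence under $U_{Sp_{2n}}$) ensures the images in the quotient remain invariant, and the length bound $|I| \leq n$ places everything in $\mathcal{F}_{Sp}^{(n)}$.

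For the second movement I would invoke the structure already built up: every $I \in \mathcal{L}_{Sp}$ satisfies $I \succeq J_0 = [u_1,\dots,u_m]$ in the tableau order, so a multiple chain $\mathsf{t} = (I_1 \preceq \cdots \preceq I_s)$ in $\mathcal{L}_{Sp}$ is in particular a chain each of whose terms dominates $J_0$. By Definition \ref{Sp-stm} this means $\Delta_{\mathsf{t}'}$ is a genuine $Sp$-standard monomial, so Proposition \ref{Sp_flagSMT} guarantees these project to linearly independent elements of $\mathcal{F}_{Sp}$. The grading by $\Lambda_{n,n}$ is then transported from the $GL$ side: using the straightening relations of Proposition \ref{straightening1} together with the ideal relations summarized in \S \ref{straightening2}, any product of generators rewrites as a linear combination of $Sp$-standard monomials of the same shape $sh_n(\mathsf{t})$, exactly as in the proof of Theorem \ref{BB-standard}, so that the shapes give a well-defined $\Lambda_{n,n}$-grading on $\mathcal{B}_{Sp}$.

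The third movement is the dimension count that upgrades an inclusion to an equality. The linearly independent $Sp$-standard monomials indexed by $\mathcal{T}_{Sp}(F,D)$ span a subspace of $(\mathcal{F}_{Sp}^{(n)})^{U_{Sp_{2n}}}_{(F,D)}$ of dimension $|\mathcal{T}_{Sp}(F,D)|$, which by Proposition \ref{Sp_counting} (via Proposition \ref{Sp_Hibi}) equals the branching multiplicity $m(\tau_{2n}^D, \tau_{2m}^F)$, and this multiplicity is precisely the dimension of the target graded component by the decomposition opening \S \ref{Sp-U-invariants}. Thus the inclusion is forced to be an equality on each graded piece, giving $\mathcal{B}_{Sp} = (\mathcal{F}_{Sp}^{(n)})^{U_{Sp_{2n}}}$; to finish I would read off the weight-$D$ eigenvalue under the torus $\{\mathrm{diag}(a_1,\dots,a_n,a_n^{-1},\dots,a_1^{-1})\}$ from (\ref{U-invariant-weight}), identifying the $\Delta_{\mathsf{t}'}$ as highest weight vectors for the copies of $\tau_{2n}^D$.

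I expect the genuine obstacle to be the interaction between the two kinds of relations in the spanning/straightening step. On the pure $GL_{2m}$ side the straightening of Proposition \ref{straightening1} preserves the disjoint-union-of-entries data and hence the shape, but passing to the quotient by $\mathcal{I}_{Sp}$ introduces the symplectic relations of \S \ref{straightening2} that contract isotropic pairs $(u_a,v_a)$; the care required is to verify that these contraction relations respect the $sh_n$ grading and keep the result inside the lattice $\mathcal{L}_{Sp}$, so that no nonstandard or out-of-range terms survive. The cleanest way around this is not to re-prove the full straightening but to lean on Proposition \ref{Sp_flagSMT}, which already encodes that the $Sp$-standard monomials form a basis of $\mathcal{F}_{Sp}$ compatible with its grading; the invariance argument of the first movement then confines everything to the $U_{Sp_{2n}}$-invariants, and the multiplicity count does the rest, so the shape-preservation issue is absorbed into results I may assume rather than reproved here.
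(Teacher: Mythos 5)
Your three-movement skeleton is exactly the paper's proof: invariance of the generators giving $\mathcal{B}_{Sp}\subseteq\left(\mathcal{F}_{Sp}^{(n)}\right)^{U_{Sp_{2n}}}$, linear independence of the $Sp$-standard monomials attached to chains in $\mathcal{L}_{Sp}$ via $I\succeq J_{0}$ and Proposition \ref{Sp_flagSMT}, and the dimension count through Proposition \ref{Sp_counting} plus the weight-$D$ computation; even your decision to absorb the shape-preservation and straightening issues into Proposition \ref{Sp_flagSMT} rather than reprove them is what the paper does (the straightening analysis is deferred to the subsequent proposition comparing $\mathcal{B}_{Sp}$ with $\mathcal{B}_{2m,n}^{2n}$). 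However, your first movement contains a genuine error. The embedding (\ref{Sp embedding}) does \emph{not} put ${Sp}_{2n}$, hence $U_{{Sp}_{2n}}$, into the first $2n$ rows of ${GL}_{2m}$: it puts it into rows and columns $\{1,\dots,n\}\cup\{2m-n+1,\dots,2m\}$. Your claim that the action is ``by upper-unitriangular row operations supported on the first $2n$ rows,'' each of which ``adds a multiple of a row already present,'' is false as stated. Concretely, $I=[u_{n+1}]\in\mathcal{L}_{Sp}$ has $\delta_{I'}(Q)=t_{n+1,1}$; if the unipotent group acted through the first $2n$ rows (as $U_{n}\subset GL_{m}$ does in the setting of Section 3), this function would pick up multiples of $t_{1,1},\dots,t_{n,1}$ and would not be invariant. (Also, the observation that $\delta_{I'}$ uses the leftmost consecutive columns is beside the point for the left action; that is what makes the \emph{right} $U_{m}$-action trivial.)

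The invariance step genuinely needs the precise placement of the embedded block, and this is the whole reason for the relabeling $\psi$ of (\ref{f-bijection}) and the form (\ref{L_Sp_elements}) of the elements of $\mathcal{L}_{Sp}$: under the left action of $U_{{Sp}_{2n}}$ (acting by inverse transpose, hence by lower-unitriangular row operations supported on $\{1,\dots,n\}\cup\{2m-n+1,\dots,2m\}$), a row $i\leq n$ is modified only by rows $j<i$ with $j\leq n$, and the rows of $\delta_{I'}$ lying in $\{1,\dots,n\}$ form an initial segment $\{1,\dots,r\}$, which is closed under such operations; rows in the middle block $\{n+1,\dots,2m-n\}$ are untouched; and rows in the bottom block $\{2m-n+1,\dots,2m\}$, which the action does mix with the top block, are never selected by any $\delta_{I'}$, because elements of $\mathcal{L}_{Sp}$ contain no $v_{h}$ with $h\leq n$ and $\psi(v_{h})=2m+1-h\geq 2m+1-n$ for such $h$. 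Once the invariance is argued this way, the rest of your proposal --- the linear independence, the count against the decomposition of $\left(\mathcal{F}_{Sp}^{(n)}\right)^{U_{Sp_{2n}}}$, and reading off the torus weight from (\ref{U-invariant-weight}) to identify the $\Delta_{\mathsf{t}'}$ as highest weight vectors --- goes through exactly as in the paper.
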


In this sense, we call $\mathcal{B}_{Sp}$ the \textit{stable range branching algebra for} 
$({Sp}_{2m}, {Sp}_{2n})$. Recall that we obtained $\mathcal{B}_{Sp}$ by lifting 
the elements of the Hibi algebra $\mathcal{H}_{Sp}$ over the distributive lattice 
$\mathcal{L}_{Sp}$ which is isomorphic to the distributive lattice 
$\mathcal{L}_{2m, n}^{2n}$. Now we compare it with the algebra 
$\mathcal{B}_{2m,n}^{2n}$ (Definition \ref{Bmkn}) obtained from the Hibi algebra 
$\mathcal{H}_{2m,n}^{2n}$ for the general linear groups.

\begin{proposition}
The stable range branching algebra $\mathcal{B}_{{Sp}}$ for 
$({Sp}_{2m},{Sp}_{2n})$ is isomorphic to the length $n$ branching algebra 
$\mathcal{B}_{{2m,n}}^{2n}$ for $(GL_{2m}, GL_{2n})$.

\begin{proof}
From the isomorphism $\mathcal{L}_{Sp} \cong \mathcal{L}_{2m,n}^{2n}$ of distributive 
lattices, with $I \mapsto \hat{I}$, we can consider a bijection between the generating set
of $\mathcal{B}_{Sp}$ and the generating set of  $\mathcal{B}_{2m,n}^{2n}$:
$$\left\{ {\delta}_{I'} + \mathcal{I}_{Sp}: I\in \mathcal{L}_{Sp}\right\} 
\longleftrightarrow  \left\{ \delta_{\hat{I}} : \hat{I} \in \mathcal{L}_{2m,n}^{2n}\right\}$$ 
Then, to see that this bijection gives rise to an algebra isomorphism, let us show
that the straightening relations among $\delta_{\hat{I}}$'s in $\mathcal{B}_{2m,n}^{2n}$
agree with those of $({\delta}_{I'} + \mathcal{I}_{Sp})$'s in $\mathcal{B}_{{Sp}} 
\subset \mathcal{F}_{Sp}$.

As explained in \S \ref{straightening2}, to express a product of ${\delta}_{I'}$'s 
as a linear combination of $Sp$-standard monomials projecting to the quotient 
$\mathcal{F}_{Sp}=\mathbb{C}[\mathsf{M}_{2m,m}]^{U_m} / \mathcal{I}_{Sp}$, 
we first apply the straightening relations in $\mathbb{C}[\mathsf{M}_{2m,m}]^{U_m}$ 
(Proposition \ref{straightening1}) and then relations from 
the ideal $\mathcal{I}_{Sp}$.

For elements $I_i \in \mathcal{L}_{Sp} \subset \mathcal{L}\langle 2m \rangle$, 
the corresponding product $\prod_i {\delta} _{I'_i}$ 
as an element in  $\mathbb{C}[\mathsf{M}_{2m,m}]^{U_m}$ can be 
expressed as a linear combination of $GL_{2m}$ standard monomials:
\begin{equation}\label{straightening3}
\prod_i {{\delta}} _{I'_i}=\sum_{r} c_{r} \prod_{j \geq 1} {{\delta}} _{K'_{r,j}}
\end{equation}
in  $\mathbb{C}[\mathsf{M}_{2m,m}]^{U_m}$. Now we claim that for each non-zero term
$\prod_j {\delta} _{K'_{r,j}}$, its indexes $K_{r,j}$'s form a multiple chain 
in $\mathcal{L}_{Sp}$, i.e., the monomial $\prod_j {\delta} _{K'_{r,j}}$ is already $Sp$-standard, 
and therefore the expression (\ref{straightening3}) provides the $Sp$-standard monomial expression 
of $\prod_i {\delta} _{I'_i}$ projecting to $\mathcal{B}_{{Sp}} \subset \mathcal{F}_{Sp}$.
This follows directly from the quadratic relation  (\ref{straightening}), that is,
for $I,J \in \mathcal{L}_{Sp}$,
$${{\delta}} _{I'} {{\delta}} _{J'}=\sum_{r}c_{r} {{\delta}} _{S'_{r}} {{\delta}} _{T'_{r}}.$$
On the right hand side, for each non-zero term ${\delta}_{S'_r} {\delta}_{T'_r}$, 
the chain $S_r \preceq T_r$ satisfies the condition $S_r \succeq J_0$ and $T_r \succeq J_0$ 
in Definition \ref{Sp-stm}, which can be easily seen from the statement (2) of Corollary 
\ref{straightening123} and the fact that $I$ and $J$ from $\mathcal{L}_{Sp}$ do not contain 
$v_h$ for $1 \leq h \leq n$.

Moreover, by Theorem \ref{B-Sp_standardmonomialtheory} and Proposition \ref{Sp_counting}, 
each $(F,D)$ homogeneous spaces of both algebras are of the same dimension, and
they have $\mathbb{C}$-bases labeled by the same patterns.
Therefore, two graded algebras are isomorphic to each other.
\end{proof}
\end{proposition}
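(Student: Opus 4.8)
The plan is to promote the lattice isomorphism $\mathcal{L}_{Sp} \cong \mathcal{L}_{2m,n}^{2n}$, say $I \mapsto \hat{I}$, to an isomorphism of the two graded algebras. First I would record the resulting bijection between generating sets
\[
\{\delta_{I'} + \mathcal{I}_{Sp} : I \in \mathcal{L}_{Sp}\} \longleftrightarrow \{\delta_{\hat{I}} : \hat{I} \in \mathcal{L}_{2m,n}^{2n}\},
\]
and then argue that it respects relations. Since both algebras are presented by standard monomial theory (Theorem \ref{BB-standard} on the $GL$ side, Theorem \ref{B-Sp_standardmonomialtheory} on the $Sp$ side), it is enough to match the quadratic straightening law for each incomparable pair of generators and then extend multiplicatively; the graded structure by $\Lambda_{n,n}$ is common to both, so the map is automatically degree-preserving.

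The heart of the argument is the following \emph{claim}: for $I,J \in \mathcal{L}_{Sp}$, the $GL_{2m}$ straightening of $\delta_{I'}\delta_{J'}$ inside $\mathbb{C}[\mathsf{M}_{2m,m}]^{U_m}$ furnished by Proposition \ref{straightening1} already yields only $Sp$-standard monomials, so that no further reduction modulo $\mathcal{I}_{Sp}$ (the second stage of the procedure in \S \ref{straightening2}) ever occurs. If the claim holds, then each term $\delta_{S'_r}\delta_{T'_r}$ in (\ref{straightening}) projects unchanged into $\mathcal{B}_{Sp}$, so the straightening relations in $\mathcal{B}_{Sp}$ are \emph{identical}, under the bijection, to the $GL_{2m}$ relations governing $\mathcal{B}_{2m,n}^{2n}$. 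I would prove the claim by inspecting each nonzero term $S_r \preceq T_r$: by part (2) of Corollary \ref{straightening123}, every entry of the $h$-th row of $S_r \preceq T_r$ is at least $h$ for $1 \leq h \leq \min(n,|I|)$, and combining this with the structural fact that elements of $\mathcal{L}_{Sp}$ contain no $v_h$ with $1 \leq h \leq n$ forces the $h$-th smallest entry of each of $S_r,T_r$ to remain at least $u_h$, which is exactly the condition $S_r \succeq J_0$, $T_r \succeq J_0$ of Definition \ref{Sp-stm}.

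To finish, I would compare dimensions. By Theorem \ref{B-Sp_standardmonomialtheory} together with Proposition \ref{Sp_counting}, for every $(F,D) \in \Lambda_{n,n}$ the $(F,D)$-graded components of $\mathcal{B}_{Sp}$ and of $\mathcal{B}_{2m,n}^{2n}$ have the same dimension, namely the common branching multiplicity, and in both cases the basis is indexed by standard tableaux of shape $F/D$; hence the bijection sends a basis to a basis in each degree and produces a graded algebra isomorphism. The step I expect to be the main obstacle is precisely the claim of the second paragraph: one must rule out ever triggering the symplectic relations of $\mathcal{I}_{Sp}$ while straightening products of generators taken from $\mathcal{L}_{Sp}$. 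This is delicate because it requires tracking, through the passage $\psi$ from the ordered alphabet $\langle 2m\rangle$ (Notation \ref{Sp-notation}) to $\{1,\dots,2m\}$, how the row-wise lower bounds of Corollary \ref{straightening123} interact with the tableau order; the feature that rescues the computation is that the generators of $\mathcal{L}_{Sp}$ omit every $v_h$ with $1\leq h\leq n$, which is exactly what keeps each straightened index above $J_0$.
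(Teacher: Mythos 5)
Your proposal is correct and follows essentially the same route as the paper: the same bijection of generators induced by $\mathcal{L}_{Sp} \cong \mathcal{L}_{2m,n}^{2n}$, the same key claim that the $GL_{2m}$ straightening of $\delta_{I'}\delta_{J'}$ for $I,J \in \mathcal{L}_{Sp}$ already produces only $Sp$-standard monomials (justified, as in the paper, by part (2) of Corollary \ref{straightening123} together with the absence of $v_h$, $1\leq h\leq n$, from elements of $\mathcal{L}_{Sp}$), and the same concluding dimension count via Theorem \ref{B-Sp_standardmonomialtheory} and Proposition \ref{Sp_counting}.
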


With this characterization $\mathcal{B}_{{Sp}} \cong \mathcal{B}_{2m,n}^{2n}$, 
from Theorem \ref{Deformation}, we have

\begin{corollary}
The stable range branching algebra $\mathcal{B}_{{Sp}}$ for 
$({Sp}_{2m},{Sp}_{2n})$ is\ a flat deformation of the Hibi algebra 
$\mathcal{H}_{Sp}$ for $({Sp}_{2m},{Sp}_{2n})$, which is isomorphic 
to $\mathcal{H}_{2m,n}^{2n}$.
\end{corollary}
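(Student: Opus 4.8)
The plan is to obtain this Corollary as a formal consequence of two facts already in hand: the isomorphism $\mathcal{B}_{Sp} \cong \mathcal{B}_{2m,n}^{2n}$ of graded algebras established in the preceding Proposition, and Theorem \ref{Deformation} applied to the general linear pair $(GL_{2m}, GL_{2n})$ in place of $(GL_m, GL_n)$.

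First I would invoke Theorem \ref{Deformation} with $m$ replaced by $2m$, $k$ replaced by $n$, and $n$ replaced by $2n$, concluding that the length $n$ branching algebra $\mathcal{B}_{2m,n}^{2n}$ for $(GL_{2m}, GL_{2n})$ is a flat deformation of the Hibi algebra $\mathcal{H}_{2m,n}^{2n}$ over $\mathcal{L}_{2m,n}^{2n}$. This supplies the weight function $wt$ on the generators indexed by $\mathcal{L}_{2m,n}^{2n}$, the induced $\mathbb{Z}$-filtration $\mathsf{F}^{wt}$, and the Rees algebra that is flat over $\mathbb{C}[t]$ with general fiber $\mathcal{B}_{2m,n}^{2n}$ and special fiber $\mathcal{H}_{2m,n}^{2n}$.

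Next I would transport this entire Rees-algebra construction across the isomorphism of the preceding Proposition. Since that isomorphism sends each generator $\delta_{I'} + \mathcal{I}_{Sp}$ to $\delta_{\hat{I}}$ compatibly with the lattice isomorphism $\mathcal{L}_{Sp} \cong \mathcal{L}_{2m,n}^{2n}$, the weight function $wt$ pulls back to a weight function on the generators of $\mathcal{B}_{Sp}$, yielding an identical filtration. Because the straightening relations, and hence the full multiplicative structure, agree under the isomorphism, the pulled-back Rees algebra is again flat over $\mathbb{C}[t]$ with general fiber $\mathcal{B}_{Sp}$ and special fiber isomorphic to $\mathcal{H}_{2m,n}^{2n}$. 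Finally, applying $\mathcal{H}_{Sp} \cong \mathcal{H}_{2m,n}^{2n}$ from Corollary \ref{Sp_Hibi_iso}, I identify this special fiber with $\mathcal{H}_{Sp}$, completing the argument.

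The only point needing care is that the deformation transport be faithful rather than merely abstract: one should confirm that the filtration built from $wt$ on $\mathcal{L}_{2m,n}^{2n}$ descends, under the lattice isomorphism, to a well-defined filtration on $\mathcal{B}_{Sp}$ whose associated graded ring is genuinely $\mathcal{H}_{Sp}$. I expect this to be immediate, since a flat deformation of this Rees type is a structure defined purely through the generators-and-relations presentation, which the algebra isomorphism preserves; thus no computation beyond the preceding Proposition is required, and this step — the sole mild obstacle — is really just a bookkeeping check.
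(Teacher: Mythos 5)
Your proposal is correct and follows essentially the same route as the paper: the paper derives this corollary in one line by combining the isomorphism $\mathcal{B}_{Sp} \cong \mathcal{B}_{2m,n}^{2n}$ from the preceding Proposition with Theorem \ref{Deformation} applied to $(GL_{2m},GL_{2n})$ at length $n$, together with the identification $\mathcal{H}_{Sp} \cong \mathcal{H}_{2m,n}^{2n}$. Your extra care about transporting the filtration and Rees algebra across the isomorphism is a harmless elaboration of the same argument, since flat deformation is manifestly preserved under algebra isomorphism.
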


\medskip


\section{Stable Range Branching Algebra for $({SO}_{p},{SO}_{q})$}


Through out this section, for $m>n\geq 2$, we set
\begin{eqnarray*}
p &=& 2m+1 \hbox{ or } 2m; \\
q &=& 2n+1 \hbox{ or } 2n; \\
k &=& n    \hbox{\ \ \ \ \ \ \  if } q=2n+1, \\
  &=& n-1  \hbox{\ \  if } q=2n. 
\end{eqnarray*}
Following the same techniques we developed for the
symplectic groups, we construct the stable range branching algebra 
$\mathcal{B}_{{SO}}$ for $({SO}_{p},{SO}_{q})$. The results and their proofs 
in this section are analogous to the case of $({Sp}_{2m},{Sp}_{2n})$.

\subsection{}

Let us review a labeling system for the irreducible rational representations
of ${SO}_{p}$ (cf. \cite[\S 3.1.4]{GW09}). For the even orthogonal group ${O}_{2m}$ 
of rank $m$, every Young diagram $F$ with $\ell (F)<m$ can label exactly one irreducible
representation $\sigma _{2m}^{F}$, which can be also realized as a 
${SO}_{2m}$ irreducible representation. The missing ones for ${SO}_{2m}$ are the
pairs of associated irreducible representations $\sigma _{2m}^{F^{+}}$ and 
$\sigma _{2m}^{F^{-}}$ appearing as the components of irreducible
representations $\sigma _{2m}^{F}$\ of $O_{2m}$ labeled by Young diagrams 
$F$ with $m$ rows, i.e., 
$\sigma _{2m}^{F}=\sigma _{2m}^{F^{+}}\oplus \sigma_{2m}^{F^{-}}$. For the odd 
special orthogonal group ${SO}_{2m+1}$ of rank $m$, every irreducible rational 
representation $\sigma _{2m+1}^{F}$ can be uniquely labeled by a Young diagram 
$F$ with $\ell (F)\leq m$. Then these representations are also ${O}_{2m+1}$-irreducible.

\subsection{}\label{sym-space}

Let $J_{m}=(j_{a,b})$ be the $m \times m$ matrix such that $j_{a,m+1-a}=1$ for 
$1\leq a\leq m$ and $0$ otherwise. Then we define the special orthogonal
groups ${SO}_{2m}$ and ${SO}_{2m+1}$ as the subgroups of ${SL}_{2m}$ and 
${SL}_{2m+1}$ preserving the symmetric bilinear forms on $\mathbb{C}^{2m}$ and 
$\mathbb{C}^{2m+1}$ induced by 
\begin{equation*}
\left[ 
\begin{array}{cc}
0 & J_{m} \\ 
J_{m} & 0%
\end{array}%
\right] \text{ and\ }\left[ 
\begin{array}{ccc}
0 & 0 & J_{m} \\ 
0 & 1 & 0 \\ 
J_{m} & 0 & 0%
\end{array}%
\right]
\end{equation*}
respectively where $0$'s are the zero matrices of proper sizes. Then, the pairs
$(e_{j}, e_{p+1-j})$ of the elementary basis elements for $\mathbb{C}^p$ make 
isotropic pairs with respect to the above symmetric bilinear form. Also, the subgroup of upper 
triangular matrices with $1$'s on the diagonal can be taken as a maximal unipotent 
subgroup of ${SO}_p$. We will denote it by $U_{{SO}_{p}}$.

\smallskip

For $m>n$, let us identify ${SO}_{2n}$ as the subgroup of ${SO}_{p}$
preserving the symmetric bilinear form on the subspace of $\mathbb{C}^{p}$
spanned by $\{e_{j},e_{p+1-j}:1\leq j\leq n\}$. Then we can embed 
${SO}_{2n}$ in ${SO}_{p}$ as follows
\begin{equation*}
\left[ 
\begin{array}{cc}
X & Y \\ 
Z & W%
\end{array}%
\right] \rightarrow \left[ 
\begin{array}{ccc}
X & 0 & Y \\ 
0 & I & 0 \\ 
Z & 0 & W%
\end{array}%
\right]
\end{equation*}
where $X,Y,Z,W$ are blocks of size $n \times n$, $I$ is the $(p-2n) \times (p-2n)$ 
identity matrix, and $0$'s are the zero matrices of proper sizes.
Similarly, we embed ${SO}_{2n+1}$ in ${SO}_{2m+1}$ by
considering the $(2n+1)$-dimensional subspace of $\mathbb{C}^{2m+1}$ spanned
by $\{e_{j},e_{2m+2-j}:1\leq a\leq n\}$ and $e_{m+1}$.
For ${SO}_{2n+1}$ in ${SO}_{2m}$, we use the $(2n+1)$-dimensional subspace
of $\mathbb{C}^{2m}$ spanned by $\{e_{j},e_{2m+1-j}:1\leq j\leq n\}$ and $(e_{m}+e_{m+1})$.

\subsection{}

Our next task is to construct an affine semigroup encoding stable range
branching rules for $({SO}_{p},{SO}_{q})$. Note that $(f_1, \cdots, f_m)  \in \mathbb{Z}^m$ 
is a dominant weight for $SO_{2m+1}$ and $SO_{2m}$, if $f_1 \geq \cdots \geq f_m \geq 0$ 
and $f_1 \geq \cdots \geq f_{m-1} \geq |f_m| \geq 0$ respectively.

\begin{lemma}[{\protect\cite[Theorems 8.1.3 and 8.1.4]{GW09}}]
\label{SO_branching}
\begin{enumerate}
\item  Let $F=(f_1, \cdots, f_m)$ and $D=(d_1, \cdots, d_m)$ be dominant weights for 
$SO_{2m+1}$ and $SO_{2m}$ respectively. Then the branching multiplicity of $\sigma _{2m}^{D}$ 
in $\sigma _{2m+1}^{F}$ is equal to $1$ if $(f_1, \cdots, f_m)$ interlaces 
$(d_1, \cdots, |d_m|)$, i.e.,
\begin{equation*}
\begin{array}{cccccccccc}
f_{1} &  & f_{2} &  & \cdots &  & f_{m-1} & & f_{m} &    \\ 
& d_{1} &  & d_{2} &  & \cdots &  & d_{m-1} & & |d_{m}|   
\end{array}%
\end{equation*}
and $0$ otherwise;

\item  Let $F=(f_1, \cdots, f_m)$ and $D=(d_1, \cdots, d_{m-1})$ be dominant weights for 
$SO_{2m}$ and $SO_{2m-1}$ respectively. Then the branching multiplicity of 
$\sigma _{2m-1}^{D}$ in $\sigma _{2m}^{F}$ is equal to $1$ if $(f_1, \cdots, |f_m|)$ 
interlaces $(d_1, \cdots, d_m)$, i.e.,
\begin{equation*}
\begin{array}{ccccccccc}
f_{1} &  & f_{2} &  & \cdots &  & f_{m-1} & &|f_{m}| \\ 
& d_{1} &  & d_{2} &  & \cdots &  & d_{m-1}   &
\end{array}%
\end{equation*}
and $0$ otherwise.
\end{enumerate}
\end{lemma}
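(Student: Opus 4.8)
The plan is to derive both statements from the Weyl character formula, using at the outset that the restriction is multiplicity-free. This multiplicity-freeness is classical and follows from the fact that $(O_p, O_q)$ is a symmetric pair, hence a Gelfand pair; the restriction to $SO_p$ inherits it. Consequently each branching multiplicity is already known to be $0$ or $1$, and the only real content is to identify exactly which $D$ occur in $\sigma_p^F$ and to match that set with the stated interlacing condition. Thus the argument runs in close parallel to the symplectic Pieri rule recorded in Lemma \ref{Sp_branching}, which is handled the same way in \cite{GW09}.

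First I would fix the maximal torus of $SO_p$ compatibly with the isotropic pairing of \S\ref{sym-space} and write the character of $\sigma_p^F$ as a ratio of alternating sums, via the Weyl character formula for type $B_m$ when $p=2m+1$ and type $D_m$ when $p=2m$. Restricting to $SO_q$ then amounts to specializing these torus variables to the embedded subgroup: for case (1) the torus keeps the same rank $m$, while for case (2) one coordinate is frozen, dropping the rank to $m-1$. In each case I would factor the quotient of the two Weyl denominators and show, by a geometric-series manipulation of the relevant determinants, that this quotient expands as a sum of the smaller group's irreducible characters indexed precisely by the tuples $D$ satisfying the interlacing inequalities. This determinant computation is the computational heart of the argument.

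The main obstacle is the $D_m$-specific bookkeeping forced by the outer automorphism of $SO_{2m}$: the appearance of $|f_m|$ and $|d_m|$ reflects that a $D_m$ weight lives in a chamber where the last coordinate may be negative, and the character $\chi^D$ for $SO_{2m}$ behaves differently when $d_m=0$ (a single self-associated representation) versus $d_m\neq 0$ (where the associated pair $\sigma_{2m}^{F^+},\sigma_{2m}^{F^-}$ must be treated together). I would circumvent this by first running the whole character argument at the level of the full orthogonal groups, where $O_{2m+1}\downarrow O_{2m}$ and $O_{2m}\downarrow O_{2m-1}$ are cleanly multiplicity-free and symmetric in the sign of the last coordinate, and only afterward restrict the outcome to $SO_p$. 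The absolute values in the statement then enter exactly at this last step, when an $O$-type is resolved into its $SO$-constituents.

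Since the statement is precisely \cite[Theorems 8.1.3 and 8.1.4]{GW09}, I would in practice simply invoke their derivation; the sketch above records the character-theoretic route one would take to reprove it in a self-contained manner, and it is this branching rule, iterated along $SO_q \subset \cdots \subset SO_p$, that will feed the affine semigroup construction in the remainder of the section, exactly as Lemma \ref{Sp_branching} does for the symplectic case.
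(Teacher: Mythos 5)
The paper gives no proof of this lemma at all: it is stated purely as a quotation of \cite[Theorems 8.1.3 and 8.1.4]{GW09}, and since your proposal ultimately does exactly the same --- invoking that reference, preceded by a sketch of the Weyl-character-formula derivation that is indeed the standard route to these rules --- it matches the paper's approach. One caveat: your opening claim that multiplicity-freeness follows because $(O_p,O_q)$ is a symmetric, hence Gelfand, pair is not correct as stated (a Gelfand pair only bounds the multiplicity of the \emph{trivial} representation of the subgroup; multiplicity-free branching is the \emph{strong} Gelfand property of the successive pair, obtained by applying Gelfand's involution trick to $(O_n\times O_{n-1},\Delta O_{n-1})$), but this does not damage the proposal, since the interlacing determinant computation you outline --- or the citation itself --- yields that fact independently.
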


By iterating these results, we may obtain patterns counting the branching multiplicities 
for $(SO_p, SO_q)$. Such patterns are different from the GT patterns for $(GL_p, GL_q)$. 
Within the stable range, however, they are the same as the ones for $(GL_p, GL_q)$ with 
restrictions on lengths.
That is because, as in the case for the symplectic groups, the length restriction 
$\ell(F) \leq k$ forces $\ell(D) \leq k$ via the interlacing conditions in 
Lemma \ref{SO_branching}. Therefore, as is shown in Proposition \ref{Sp_counting} for 
the symplectic groups, we have

\begin{proposition}\label{SO-counting}
Let $F$ and $D$ be Young diagrams with $F\supseteq D$ and $\ell (F)\leq k$.
Then the branching multiplicity $m(\sigma _{q}^{D},\sigma _{p}^{F})$ is
equal to the number of elements in $\mathcal{P}_{p}^{q}(F,D)$, and therefore
it is equal to the number of elements in $\mathcal{T}_{p}^{q}(F,D)$.
\end{proposition}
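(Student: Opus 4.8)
The plan is to imitate the proof of Proposition \ref{Sp_counting} for the symplectic groups: iterate the one-step branching rule of Lemma \ref{SO_branching} down the chain $SO_p \supset SO_{p-1} \supset \cdots \supset SO_q$ and check that, within the stable range, the resulting patterns coincide with the $GL_p$--$GL_q$ Gelfand--Tsetlin patterns already counted by $|\mathcal{P}_p^q(F,D)|$ in Proposition \ref{number of Patterns}.

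First I would establish the length bound along the whole chain. Since $F \supseteq D$ and $\ell(F) \leq k$ we get $\ell(D) \leq k$, and each interlacing relation in Lemma \ref{SO_branching} forces every intermediate weight $E$ (with $F \sqsupseteq E \sqsupseteq \cdots \sqsupseteq D$) to satisfy $\ell(E) \leq \ell(F) \leq k$, because interlacing makes each coordinate of $E$ no larger than the corresponding coordinate of the diagram above it. The crucial observation is that $k$ is strictly smaller than the rank of every even orthogonal group $SO_{2j}$ occurring in the chain: the smallest such rank is $n+1$ when $q=2n+1$ (so $k=n<n+1$) and $n$ when $q=2n$ (so $k=n-1<n$). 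Hence the potentially negative last coordinate of each even-orthogonal weight is forced to vanish, so that $|d_m|=d_m$ and $|f_m|=f_m$ throughout; in particular no weight of full length $m$ ever arises, so we stay with the genuine $\sigma$-representations and there is no split ($\pm$) ambiguity.

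With the absolute values thus eliminated, I would observe that both elementary steps collapse to the ordinary interlacing of Young diagrams: the rank-preserving step $SO_{2j+1}\to SO_{2j}$ and the rank-decreasing step $SO_{2j}\to SO_{2j-1}$ of Lemma \ref{SO_branching} each impose exactly the condition $\sqsupseteq$ on the truncated (length $\leq k$) diagrams. Since the chain $SO_p\supset\cdots\supset SO_q$ has $p-q$ steps and hence $p-q-1$ intermediate weights---the same as the number of intermediate rows $E_{q+1},\ldots,E_{p-1}$ of a $GL_p$--$GL_q$ pattern---the branching multiplicity $m(\sigma_q^D,\sigma_p^F)$ equals the number of chains
\begin{equation*}
F \sqsupseteq E_{p-1} \sqsupseteq \cdots \sqsupseteq E_{q+1} \sqsupseteq D
\end{equation*}
of interlacing Young diagrams of length $\leq k$. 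By the argument proving Proposition \ref{number of Patterns}(2), applied with $m,n$ replaced by $p,q$, this is precisely $|\mathcal{P}_p^q(F,D)|$, and the final equality with $|\mathcal{T}_p^q(F,D)|$ is then immediate from the bijection of Proposition \ref{bijection YT-GT}.

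The hard part, and the only genuine departure from the symplectic case, is the parity alternation between odd and even orthogonal groups: it introduces the absolute-value coordinates $|f_m|,|d_m|$ and changes the number of coordinates in an irregular, step-dependent way, so the chain does not a priori look like a $GL$ Gelfand--Tsetlin pattern. The whole content of the argument is therefore to show that the single hypothesis $\ell(F)\leq k$ neutralizes both complications at once: it kills the negative coordinates and makes every elementary step---rank-preserving or rank-decreasing---contribute the same interlacing datum as a $GL$ step, so that the $SO$ enumeration and the $GL$ enumeration become literally the same count.
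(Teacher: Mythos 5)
Your proposal is correct and follows essentially the same route as the paper: iterate the one-step rules of Lemma \ref{SO_branching} down the chain, observe that the stable range condition $\ell(F)\leq k$ forces every intermediate weight to be a genuine Young diagram of length at most $k$ (so the absolute values and the $\pm$ splitting never intervene), identify the resulting interlacing chains with the $GL_p$--$GL_q$ Gelfand--Tsetlin patterns as in Proposition \ref{number of Patterns}(2), and finish with Proposition \ref{bijection YT-GT}. Your explicit verification that $k$ is strictly below the rank of every even orthogonal group in the chain is a detail the paper leaves implicit, but it is the same argument.
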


\smallskip

As in the case of $({GL}_{p},{GL}_{q})$ in (\ref{GT-Patterns}), we can
consider the affine semigroup $\mathcal{P}_{p,k}^{q}$ of the order
preserving maps from the GT poset $\Gamma _{p,k}^{q}$ of length $k$:
\begin{equation*}
\begin{array}{cccccccccc}
x_{1}^{(p)} &  & x_{2}^{(p)} &  & \cdots &  & x_{k}^{(p)} &  &  &  \\ 
& x_{1}^{(p-1)} &  & x_{2}^{(p-1)} &  & \cdots &  & x_{k}^{(p-1)} &  &  \\ 
&  & \ddots &  & \ddots &  & \cdots &  & \ddots &  \\ 
&  &  & x_{1}^{(q)} &  & x_{2}^{(q)} &  & \cdots &  & x_{k}^{(q)}%
\end{array}%
\end{equation*}
to non-negative integers. We call $\mathcal{P}_{p,k}^{q}$ the 
\textit{semigroup for} $({SO}_{p},{SO}_{q})$, and define its associated semigroup
algebra:
\begin{equation*}
\mathbb{C}[\mathcal{P}_{p,k}^{q}]=\bigoplus_{(F,D)\in \Lambda _{k,k}}
\mathbb{C}[\mathcal{P}_{p}^{q}]_{(F,D)} 
\end{equation*}
and call it the \textit{semigroup algebra for} $({SO}_{p},{SO}_{q})$.

\medskip

\subsection{}\label{SO-distlattice}

Let us define the distributive lattice for $({SO}_{p},{SO}_{q})$ and
study its Hibi algebra. We shall closely follow the construction developed in
\S \ref{Sp-distlattice} for the symplectic groups. Consider the ordered letters:
\begin{eqnarray}\label{O-alphabet}
\langle 2m \rangle &=& \left\{ u_1 < v_1 < u_2 < v_2 \cdots < u_m < v_m  \right\}, \\
\langle 2m+1 \rangle &=& \left\{ u_1 < v_1 < u_2 < v_2 \cdots < u_m < v_m  < \infty \right\} \notag
\end{eqnarray}
for $p=2m$ and $2m+1$ respectively.

If we let $\mathcal{L} \langle p \rangle$ denote the set of all non-empty subsets $J$
of $\langle p \rangle$, then on $\mathcal{L} \langle p \rangle$ we can also impose the tableau 
order $\preceq$ as in \S \ref{Sec_taborder} and \S \ref{Sp-distlattice}. Then $\mathcal{L}\langle p \rangle$ is a distributive
lattice isomorphic to $\mathcal{L}_{p}$, as in the case of the symplectic groups, through
the bijection (\ref{iota}) (and $\iota(\infty)=2m+1$ for $p=2m+1$).

\smallskip

Then, we define $\mathcal{L}\langle n, q, p \rangle$ to be the set of nonempty 
subsets $J$ of $\mathcal{L}\langle p \rangle$ of the following forms:
\begin{eqnarray}\label{SO-columns}
&&[u_1, u_2, \cdots , u_c , y_{1},y_{2},\cdots ,y_{s}], \\
&&[u_1, u_2, \cdots , u_c], \notag \\
&&[y_{1}, y_{2}, \cdots , y_{s}] \notag
\end{eqnarray}
where $c \leq n$ and, for $q=2n$ and $2n+1$, 
\begin{eqnarray*}
u_{n+1} \leq y_1 < y_2 < \cdots < y_s; \\
v_{n+1} \leq y_1 < y_2 < \cdots < y_s.
\end{eqnarray*}
respectively. In particular, if $u_c \in J$ for 
$c\leq n$, then $\{ u_h: 1 \leq h \leq c\} \subset J$.

\smallskip

Now, let $\mathcal{L}\langle n,q,  p \rangle_k$ be the subset of 
$\mathcal{L}\langle n, q, p \rangle$ consisting of $J$ with $|J| \leq k$. Then, as 
is the case for the symplectic groups (\S \ref{Sp-distlattice}), we can identify 
$\mathcal{L}\langle n, q, p \rangle_k$ 
with the distributive lattice $\mathcal{L}_{p-q+n, k}^n$, and therefore with $\mathcal{L}_{p,k}^q$
by Corollary \ref{L-conversion}.

\begin{definition}
The distributive lattice for $({SO}_{p},{SO}_{q})$ is $\mathcal{L}\langle n, q, p \rangle_k$, 
and it will be denoted by $\mathcal{L}_{SO}$.
\begin{eqnarray*}
\mathcal{L}_{SO} &=& \mathcal{L}\langle n, q, p \rangle_k \\
                 &\cong&  \mathcal{L}_{p, k}^q
\end{eqnarray*}
\end{definition}

Then we define the \textit{Hibi algebra for} $({SO}_{p},{SO}_{q})$, denoted 
by $\mathcal{H}_{SO}$, to be the Hibi algebra over the distributive lattice 
$\mathcal{L}_{SO}$. From the isomorphism of distributive lattices, we have 
$\mathcal{H}_{SO} \cong \mathcal{H}_{p, k}^q$. Then from 
Proposition \ref{DL-GT} for $(GL_{p}, GL_{q})$, we have

\begin{corollary}\label{SO-Hibi-iso}
There is an algebra isomorphism
\begin{equation*}
\mathcal{H}_{SO} \cong \mathbb{C}[\mathcal{P}_{p,k}^{q}]
\end{equation*}
\end{corollary}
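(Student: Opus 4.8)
The plan is to obtain $\mathcal{H}_{SO} \cong \mathbb{C}[\mathcal{P}_{p,k}^{q}]$ by chaining together two isomorphisms that are already in hand, in exact parallel with the symplectic case (Corollary \ref{Sp_Hibi_iso}). The guiding observation is that the assignment $L \mapsto \mathcal{H}(L)$ is functorial for order isomorphisms of finite distributive lattices: if $\phi \colon L \to L'$ is an order isomorphism, then $\phi$ preserves meet and join, so the substitution $z_{\gamma} \mapsto z_{\phi(\gamma)}$ carries each defining relation $z_{\alpha}z_{\beta} - z_{\alpha \wedge \beta}z_{\alpha \vee \beta}$ of $\mathcal{H}(L)$ to the corresponding relation of $\mathcal{H}(L')$, and hence descends to an algebra isomorphism $\mathcal{H}(L) \cong \mathcal{H}(L')$.

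First I would feed the order isomorphism $\mathcal{L}_{SO} = \mathcal{L}\langle n, q, p \rangle_{k} \cong \mathcal{L}_{p,k}^{q}$, established just before the definition of $\mathcal{L}_{SO}$ (via the map $\iota$ of (\ref{iota}) together with Corollary \ref{L-conversion}), into the functoriality above; this yields $\mathcal{H}_{SO} \cong \mathcal{H}_{p,k}^{q}$. Then I would apply the first statement of Proposition \ref{DL-GT} to the pair $(GL_{p}, GL_{q})$, which gives $\mathcal{H}_{p,k}^{q} \cong \mathbb{C}[\mathcal{P}_{p,k}^{q}]$. Composing the two isomorphisms proves the corollary. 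No reconciliation of the target objects is needed, since the semigroup for $({SO}_{p}, {SO}_{q})$ is by definition the very affine semigroup $\mathcal{P}_{p,k}^{q}$ that Proposition \ref{DL-GT} concerns.

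The only point demanding attention is whether the identification $\mathcal{L}_{SO} \cong \mathcal{L}_{p,k}^{q}$ preserves $\wedge$ and $\vee$ rather than merely the partial order, but this is automatic, since an order isomorphism between two lattices necessarily respects meets and joins; I therefore expect no genuine obstacle, the content being a transport of structure. As an alternative that bypasses Proposition \ref{DL-GT}, one could argue directly in the manner of \S \ref{order increasing SP}: attach to each $I \in \mathcal{L}_{SO}$ the characteristic function of an associated order increasing subset of $\Gamma_{p,k}^{q}$ and extend additively to multiple chains via (\ref{procedure2}), then check that this sends the Hibi relations to the additive relations among the generators of $\mathbb{C}[\mathcal{P}_{p,k}^{q}]$. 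This reproduces the same isomorphism concretely and makes transparent that it is graded by $\Lambda_{k,k}$.
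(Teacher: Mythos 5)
Your proposal is correct and follows exactly the paper's route: the lattice isomorphism $\mathcal{L}_{SO} \cong \mathcal{L}_{p,k}^{q}$ induces $\mathcal{H}_{SO} \cong \mathcal{H}_{p,k}^{q}$ (order isomorphisms automatically preserve $\wedge$ and $\vee$), and Proposition \ref{DL-GT} for $(GL_p, GL_q)$ finishes the argument. Even your alternative argument via characteristic functions on order increasing subsets of $\Gamma_{p,k}^{q}$ coincides with the ``direct proof'' the paper itself sketches in the subsection following the corollary.
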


\subsection{}

As in the previous cases (\S \ref{Hibi_mkn}), we shall identify the monomials in 
the Hibi algebra $\mathcal{H}_{SO}$ with tableaux whose columns are elements of 
$\mathcal{L}_{SO}$.

\begin{definition}
A standard tableau $\mathsf{t}$ for $({SO}_{p},{SO}_{q})$ is a multiple
chain $I_{1}\preceq \cdots \preceq I_{s}$ in $\mathcal{L}_{SO}$. 
The shape $sh_{n}(\mathsf{t})$ of $\mathsf{t}$ is $F/D$ where 
$F=(|I_{1}|,\cdots ,|I_{s}|)^{t}$ and $D=(d_{1},\cdots ,d_{n})$
with $d_{r}$ being the number of $u_r$'s in $\mathsf{t}$ for $1\leq r\leq n$.
\end{definition}

We write $\mathcal{T}_{SO}(F,D)$ for the set of all standard
tableaux for $({SO}_{p},{SO}_{q})$ whose shapes are $F/D$, and set
\begin{equation*}
\mathcal{T}_{SO}=\bigcup_{(F,D)\in \Lambda _{k,k}}\mathcal{T}_{SO}(F,D)
\end{equation*}
Then as in the case of the symplectic groups, $\mathcal{T}_{SO}$ 
gives rise to a $\mathbb{C}$-basis for the Hibi algebra for $({SO}_{p},{SO}_{q})$.

\begin{proposition} \label{SO_Hibi}
\begin{enumerate}
\item The Hibi algebra $\mathcal{H}_{SO}$ for $(SO_p, SO_q)$
is graded by $\Lambda _{k,k}$ and $\mathcal{T}_{SO}(F,D)$ forms a 
$\mathbb{C}$-basis of the graded component $\mathcal{H}_{SO}(F,D)$.

\item For $(F,D) \in \Gamma_{k,k}$, the number of standard tableaux 
for $(SO_p, SO_q)$ of shape $F/D$ is equal to the branching multiplicity 
$m(\sigma ^D_q, \sigma ^F_p)$ of $\sigma ^D_q$ in  $\sigma ^F_p$.
\end{enumerate}
\begin{proof}
From the isomorphism $\mathcal{L}_{SO} \cong \mathcal{L}^q_{p,k}$, 
it is straightforward to see that there is a bijection 
between $\mathcal{T}_{SO}(F,D)$ and $\mathcal{T}^q_p (F,D)$.
Then (1) follows from Lemma \ref{Hibi-basis} and
(2) follows from Proposition \ref{SO-counting}.
\end{proof}
\end{proposition}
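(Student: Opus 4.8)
The plan is to deduce everything from the two isomorphisms established earlier, reducing the orthogonal statements to their general-linear counterparts exactly as was done for the symplectic groups in Proposition \ref{Sp_Hibi}. The central fact I would exploit is the lattice isomorphism $\mathcal{L}_{SO} \cong \mathcal{L}_{p,k}^{q}$ recorded in the definition of $\mathcal{L}_{SO}$ (coming from identifying $\mathcal{L}\langle n, q, p \rangle_k$ with $\mathcal{L}_{p-q+n,k}^{n}$ and then applying Corollary \ref{L-conversion}). Since a standard tableau for $(SO_p, SO_q)$ of shape $F/D$ is by definition a multiple chain $I_1 \preceq \cdots \preceq I_s$ in $\mathcal{L}_{SO}$, and a standard tableau in $\mathcal{T}_p^q(F,D)$ is a multiple chain in $\mathcal{L}_{p,k}^{q}$, the lattice isomorphism transports chains to chains. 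The only thing I must verify is that this order isomorphism preserves shape data, i.e.\ it sends a chain of shape $F/D$ to a chain of shape $F/D$.

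First I would check that the outer shape $F=(|I_1|,\cdots,|I_s|)^t$ is preserved: an order isomorphism of distributive lattices need not preserve element length in general, but here the identification $\mathcal{L}\langle n,q,p\rangle_k \cong \mathcal{L}_{p-q+n,k}^{n}$ is realized through the explicit bijection $\iota$ of (\ref{iota}), which is a bijection of underlying letter-sets and hence manifestly preserves cardinalities $|J|$. Thus the column lengths, and therefore $F$, are unchanged. Next I would verify that the inner shape $D$ is preserved: in $\mathcal{T}_{SO}(F,D)$ the entry $d_r$ counts occurrences of the letter $u_r$, whereas in $\mathcal{T}_p^q(F,D)$ it counts occurrences of the integer $r$. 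Under $\iota$ (together with the row-shift of Corollary \ref{L-conversion}) the initial segment $u_1,\ldots,u_n$ corresponds precisely to the entries $1,\ldots,n$ that define $D$, so the multiplicities match row by row. This gives the desired shape-preserving bijection between $\mathcal{T}_{SO}(F,D)$ and $\mathcal{T}_p^q(F,D)$.

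With that bijection in hand, part (1) is immediate: Lemma \ref{Hibi-basis} states that $\mathcal{T}_p^q(F,D)$ is a $\mathbb{C}$-basis for the $(F,D)$-graded component of $\mathcal{H}_{p,k}^{q}$, and that this Hibi algebra is graded by $\Lambda_{k,n}$; transporting along the isomorphism $\mathcal{H}_{SO}\cong\mathcal{H}_{p,k}^{q}$ yields the grading of $\mathcal{H}_{SO}$ by $\Lambda_{k,k}$ and identifies $\mathcal{T}_{SO}(F,D)$ as a $\mathbb{C}$-basis for $\mathcal{H}_{SO}(F,D)$. For part (2), I would combine the bijection with Proposition \ref{SO-counting}, which equates the branching multiplicity $m(\sigma_q^D,\sigma_p^F)$ with $\#\mathcal{T}_p^q(F,D)$; since the bijection preserves shape, $\#\mathcal{T}_{SO}(F,D)=\#\mathcal{T}_p^q(F,D)=m(\sigma_q^D,\sigma_p^F)$.

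The main obstacle I anticipate is purely bookkeeping rather than conceptual: one must be careful that the grading poset is $\Lambda_{k,k}$ for the orthogonal case (not $\Lambda_{k,n}$ as for the general linear input), and that the two distinct cases $q=2n+1$ (so $k=n$) and $q=2n$ (so $k=n-1$) are both handled correctly by the constraint $u_{n+1}\le y_1$ versus $v_{n+1}\le y_1$ in (\ref{SO-columns}). Since $\ell(F)\le k$ in either case, the inner shape satisfies $\ell(D)\le k$ as well, so both parts of $\Lambda_{k,k}$ are consistent; verifying that the shape statistics survive the case distinction is the one place demanding genuine attention, but it reduces to tracking the letter $u_r$ through $\iota$ and the shift of Corollary \ref{L-conversion}, exactly as in the symplectic proof.
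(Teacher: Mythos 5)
Your proposal is correct and follows essentially the same route as the paper: the lattice isomorphism $\mathcal{L}_{SO} \cong \mathcal{L}_{p,k}^{q}$ induces a shape-preserving bijection $\mathcal{T}_{SO}(F,D) \leftrightarrow \mathcal{T}_{p}^{q}(F,D)$, after which (1) follows from Lemma \ref{Hibi-basis} and (2) from Proposition \ref{SO-counting}. The only difference is that you spell out the shape-preservation bookkeeping (cardinalities under the letter relabeling, the count of $u_r$'s versus $r$'s, and the $q=2n$ versus $q=2n+1$ cases) that the paper dismisses as ``straightforward to see.''
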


\subsection{}
We can also find a correspondence between $\mathcal{L}_{SO}$ 
and the set of order increasing subsets of the GT poset $\Gamma_{p,k}^{q}$ in 
the same way explained in \S \ref{order increasing SP}. 
Namely, define the order increasing subset $A_{I}$ of $\Gamma _{p,k}^{q}$ 
corresponding to $I\in \mathcal{L}_{SO}$ as
\begin{equation}
A_{I} = \bigcup_{q \leq j \leq p} 
\left\{ x_{1}^{(j)}, x_{2}^{(j)}, \cdots, x_{s_{j}}^{(j)} \right\}
\end{equation}
where, for $n+1 \leq h \leq m$, $s_{2h-1}$ and $s_{2h}$ are the numbers of elements
in $I$ less than or equal to $u_h$ and $v_h$ respectively; and
$s_{2n}$ is the number of elements in $I$ less 
than $v_n$ and $s_{2m+1}$ is the number of elements in $I$. 
Then every element of $\mathcal{T}_{SO}$ can be related to a sum of 
characteristic functions over these order increasing subsets as given in
Proposition \ref{bijection YT-GT} and (\ref{procedure2}). 
This gives a direct proof for Corollary \ref{SO-Hibi-iso}.

\subsection{}

To construct the stable range branching algebra for $({SO}_{p},{SO}_{q})$, we review the
polynomial model of ${SO}_{p}$-representation spaces studied in \cite{Ki09}.

\smallskip

From (\ref{grading1}), $\mathbb{C}[\mathsf{M}_{p,m}]^{U_{m}}$ consists of 
${GL}_{p}$-irreducible representations $\rho _{p}^{F}$ with $\ell (F)\leq m$.
By taking ${O}_{p}$ as a subgroup of ${GL}_{p}$, ${O}_{p}\times {GL}_{m}$ is
acting on the space $\mathsf{M}_{p,m} \cong \mathbb{C}^p \otimes \mathbb{C}^m$ via 
the action of ${GL}_{p}\times {GL}_{m}$ given in (\ref{doubleaction}). Then we take 
the quotient of $\mathbb{C}[\mathsf{M}_{p,m}]^{U_{m}}$ 
by the ideal $\mathcal{I}_{{O}}=\sum_{F}\mathcal{I}^{F}$ where $\mathcal{I}^{F}$ is 
the complement space to the ${O}_{p}$-irreducible representation $\sigma _{p}^{F}$ 
in $\rho _{p}^{F}$, i.e., $\rho _{p}^{F}=\sigma _{p}^{F}\oplus \mathcal{I}^{F}$ 
for each $F$ (cf. \cite[\S 19.5]{FH91}). 

Then \cite{Ki09} shows that this quotient algebra can be taken as 
a polynomial model for the flag algebra for ${SO}_{p}$ in that it contains exactly 
one copy of each irreducible representation $\sigma _{p}^{F}$ with $\ell (F)\leq m$:
\begin{eqnarray*}
\mathcal{F}_{{SO}} &=& \mathbb{C}[\mathsf{M}_{p,m}]^{U_{m}}/\mathcal{I}_{{O}} \\
                   &=& \sum_{\ell (F)\leq m}\sigma _{p}^{F}
\end{eqnarray*}
and it is graded by Young diagrams, i.e., $\sigma _{p}^{F_1}\cdot \sigma
_{p}^{F_2}\subset \sigma _{p}^{F_1 + F_2}$. We note that $\sigma_{2m}^F$ with $\ell(F)=m$
are irreducible $O_{2m}$ representations, but they are not irreducible as $SO_{2m}$
representations. 

\smallskip

To take the stable range $\ell (F)\leq k$, we consider its subalgebra consisting of 
$\sigma_{p}^{F}$ with $\ell (F)\leq k$:
\begin{equation}
\mathcal{F}_{{SO}}^{(k)}=\sum_{\ell (F)\leq k}\sigma _{p}^{F}  \label{SO-flag}
\end{equation}

\subsection{}

To describe generators of $\mathcal{F}_{SO}$, to each 
$I=[w_{1},\cdots ,w_{r}]\in \mathcal{L}\langle p \rangle$, we attach a
determinant function ${\delta}_{I'}$\ as follows. 

For $Q \in \mathsf{M}_{p,m}$, 
we let ${\delta}_{I'}(Q)$ denote the determinant of the submatrix of 
$Q=(t_{a,b})$ obtained by taking 
the $i_{1}^{\prime} , i_{2}^{\prime} , \cdots , i_{r}^{\prime} $-th rows 
and the $1,2,\cdots ,r$-th columns:
\begin{equation}\label{O_determinant}
{\delta} _{I'}(Q)=\det 
\begin{bmatrix}
t_{i_{1}^{\prime} 1} & t_{i_{1}^{\prime}  2} & \cdots & t_{i_{1}^{\prime}  r}
\\ 
t_{i_{2}^{\prime}  1} & t_{i_{2}^{\prime}  2} & \cdots & t_{i_{2}^{\prime} r}
\\ 
\vdots & \vdots & \ddots & \vdots \\ 
t_{i_{r}^{\prime}  1} & t_{i_{r}^{\prime}  2} & \cdots & t_{i_{r}^{\prime}  r}%
\end{bmatrix}
\end{equation}
where  is $\{ i_{1}^{\prime}, i_{2}^{\prime}, \cdots , i_{r}^{\prime} \}$ is the image of 
the set $\{ w_1, w_2, \cdots, w_r \} \subset \langle p \rangle$ under $\psi_p$:
\begin{eqnarray}\label{SO-psi}
\psi_{2m}:\{u_1, v_1 ,\cdots, u_m, v_m \} \longrightarrow \{1, 2, \cdots, 2m \} \\
\psi_{2m}(u_c)=c \hbox{ \ and \ } \psi_{2m}(v_c)=2m+1-c ; \notag \\ 
\psi_{2m+1}:\{u_1, v_1 ,\cdots, u_m, v_m , \infty \} \longrightarrow \{1, 2, \cdots, 2m , 2m+1\} \notag \\  
\psi_{2m+1}(u_c)=c \hbox{ \ and \ } \psi_{2m+1}(v_c)=2m+2-c  \notag
\end{eqnarray}
for $p=2m$ and $2m+1$ respectively, for $1 \leq c \leq m$ and $\psi_{2m+1}(\infty)=m+1$. 

Then, with the bijection $\psi_p$, we can impose a new order $\lessdot$ on 
$\{1, 2, \cdots, p \}$ induced by the order on $\langle p \rangle$ in (\ref{O-alphabet}):
\begin{eqnarray*}
&& 1 \lessdot 2m \lessdot 2 \lessdot 2m-1 \lessdot \cdots \lessdot m \lessdot m+1; \\
&& 1 \lessdot 2m+1 \lessdot 2 \lessdot 2m \lessdot \cdots \lessdot m \lessdot m+2 \lessdot m+1 
\end{eqnarray*}
and we keep using the convention of $I'$, $\delta_{I'}$ and $\Delta_{\mathsf{t}'}$ 
used for the symplectic groups (Notation \ref{Sp-notation}). This conversion procedure is to make our labeling $(u_c, v_c)$ of isotropic pairs (\S \ref{sym-space}) compatible with those used
in \cite{KW93, Ki09}.

\medskip

To $I=[w_{1},\cdots ,w_{s}]\in \mathcal{L}\langle p \rangle$, we attach a determinant 
function ${\delta}_{I'}$ as we define in (\ref{O_determinant}). For a multiple chain 
$\mathsf{t}=(I_{1}\preceq \cdots \preceq I_{r})$ of $\mathcal{L}\langle p \rangle$, 
let $\mathsf{t}(a,b)$ denote the $a$-th smallest element in the $b$-th column 
$I_{b}$ of the tableau $\mathsf{t}$. Also, let $\alpha _{2c}$ and 
$\beta _{2c}$ be the numbers of elements less than or equal to $v_c$ in $I_1$ and $I_2$ respectively. 

\begin{definition}[{\protect{cf. \cite{KW93, Pr94}}}]\label{O-standard}
Then the corresponding monomial
\begin{equation*}
{\Delta} _{\mathsf{t}'}={\delta} _{I'_{1}} {\delta}_{I'_{2}}\cdots {\delta}_{I'_{r}}\in 
\mathbb{C}[\mathsf{M}_{p,m}]^{U_{m}}
\end{equation*}
is called an $O$-\textit{standard monomial}, if, in the chain $\mathsf{t}=(I_1 \cdots \preceq I_r)$,
\begin{enumerate}
\item $\alpha _{2c}+\beta _{2c}\leq 2c$ for $1\leq c\leq m$, and

\item if $\alpha _{2c}+\beta_{2c}=2c$ for some $c$ with 
$\mathsf{t}(\alpha _{2c},1)=u_c$ and $\mathsf{t}(\beta _{{2c}},b)=v_c$ for some $b$, then 
$\mathsf{t}(\beta_{2c} -1,b)=u_c$.
\end{enumerate}
\end{definition}
In \cite{KW93} and \cite{Pr94}, the above conditions (1) and (2) are 
used to define Young tableaux describing weight basis elements of 
irreducible $O_p$ representations.

\subsection{}\label{straightening_SO}
The ideal $\mathcal{I}_{O}$ is finitely generated. 
Using the elements of  $\mathcal{I}_{{O}}$ (cf. \cite[\S19.5]{FH91}) combined with 
standard monomial theory of
$\mathbb{C}[\mathsf{M}_{p,m}]^{U_m}$,  \cite{Ki09} shows that $O$-standard monomials project 
to $\mathbb{C}$-basis elements of the quotient algebra $\mathcal{F}_{SO}$, and that they are
compatible with the graded structure of the algebra.

\smallskip

To a product of ${\delta}_{I'}$'s in $\mathbb{C}[\mathsf{M}_{p,m}]^{U_m}$, we apply the 
straightening relations in Proposition \ref{straightening1} to obtain a linear combination 
of standard monomials for $GL_{p}$:
$$\prod_i {\delta}_{I'_i}=\sum_{r} c_{r} \prod_{j \geq 1} {\delta}_{K'_{r,j}}$$

If there is a non-zero term  $\prod_{j} \delta_{K'_{r,j}}$ which is not 
an $O$-standard tableau, then apply relations from the ideal
$\mathcal{I}_{{O}}$, which replace the entries of  $K_{r,j}$'s corresponding to isotropic
pairs $(u_a, v_a)$ with the sum of pairs $(u_b, v_b)$'s (and $(\infty, \infty)$ for $p=2m+1$) for 
some $a \leq b$, thereby expressing $\prod_{j} \delta_{K'_{r,j}}$ as a linear
combination of $O$-standard monomials. For further details, we refer to \cite{Ki09}.  
A combinatorial description this straightening procedure in the language of tableaux 
is given in \cite{KW93}.

\smallskip

The following is shown in \cite{Ki09}. See also \cite{KW93} and \cite{Pr94}.

\begin{proposition}[{\protect{\cite[Theorem 3.6,Proposition 3.9]{Ki09}}}]\label{SO_flagSMT}
${O}$-standard monomials project to a $\mathbb{C}$-basis
of the flag algebra $\mathcal{F}_{{SO}}$ for ${SO}_{p}$. In particular, for
a Young diagram $F$ with $\ell (F)\leq m$, ${O}$-\textit{standard monomials}
of shape $F$ form a weight basis for the ${O}_{p}$-irreducible
representation $\sigma_{p}^{F}\subset \mathcal{F}_{{SO}}$.
\end{proposition}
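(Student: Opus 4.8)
The plan is to mirror the argument for $\mathcal{F}_{Sp}$ in \S\ref{straightening2}, reducing everything to a graded statement. Since $\mathcal{F}_{SO}=\sum_{\ell(F)\le m}\sigma_p^F$ and the multiplication satisfies $\sigma_p^{F_1}\cdot\sigma_p^{F_2}\subset\sigma_p^{F_1+F_2}$, the algebra is graded by Young diagrams with the shape-$F$ component equal to $\sigma_p^F$. As the ideal $\mathcal{I}_O=\sum_F\mathcal{I}^F$ is itself graded by shape, it therefore suffices to prove that, for each fixed $F$ with $\ell(F)\le m$, the images of the $O$-standard monomials of shape $F$ form a basis of the shape-$F$ graded component $\sigma_p^F$ of $\mathcal{F}_{SO}$.

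First I would establish spanning. Every product of determinants $\delta_{I'}$ is, by the $GL_p$ straightening relations of Proposition \ref{straightening1}, a linear combination of $GL_p$ standard monomials of the same shape. Feeding each such monomial that fails to be $O$-standard into the second stage described in \S\ref{straightening_SO}---applying the relations from $\mathcal{I}_O$ that replace the entries of an isotropic pair $(u_a,v_a)$ by entries of a later pair $(u_b,v_b)$ with $a\le b$ (together with $(\infty,\infty)$ when $p=2m+1$)---rewrites it, modulo $\mathcal{I}_O$, in terms of monomials satisfying conditions (1) and (2) of Definition \ref{O-standard}. Both stages preserve the shape $F$, so the $O$-standard monomials of shape $F$ span $\sigma_p^F$ inside the quotient. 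The substance of this step is a termination argument: one must exhibit a well-ordering, in the spirit of the weight filtration used in the proof of Theorem \ref{Deformation} together with a statistic measuring how far isotropic pairs have been pushed to the right, that strictly decreases under each application of the two kinds of relations, so that the rewriting halts at $O$-standard monomials. This is the content I would import from \cite{Ki09}, with the tableau bookkeeping spelled out in \cite{KW93}.

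Next I would count. By \cite{KW93,Pr94}, conditions (1) and (2) of Definition \ref{O-standard} characterise precisely the Young tableaux indexing a weight basis of the irreducible $O_p$-representation $\sigma_p^F$, so the number of $O$-standard monomials of shape $F$ equals $\dim\sigma_p^F$, which is the dimension of the shape-$F$ component of $\mathcal{F}_{SO}$. Since these monomials already span that component and their cardinality equals its dimension, they form a basis; no separate linear-independence computation modulo $\mathcal{I}_O$ is required. Finally, each $\delta_{I'}$ is a weight vector for the maximal torus of $GL_p$ restricted to $O_p$, its weight being read off from the multiset of $u_h$'s and $v_h$'s in $I$, so every $O$-standard monomial is a torus weight vector, giving the asserted weight basis.

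The main obstacle is the spanning step, and specifically the termination and correctness of the two-stage straightening in the orthogonal setting. Unlike the symplectic case, condition (2) of Definition \ref{O-standard} is genuinely extra, and the relations coming from $\mathcal{I}_O$ (cf. \cite[\S19.5]{FH91}) are more delicate: one must check that after pushing isotropic pairs rightward the resulting monomials land among those meeting both (1) and (2) rather than merely (1), and handle separately the parity of $p$ and the appearance of $\infty$ for $p=2m+1$. Once this combinatorial straightening is shown to terminate, the graded decomposition and the King--Welsh/Proctor count make the basis statement immediate.
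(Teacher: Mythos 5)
Your proposal is correct and follows essentially the same route as the paper: the paper does not give its own proof of this proposition but imports it from \cite{Ki09}, sketching in \S\ref{straightening_SO} precisely the two-stage rewriting (the $GL_p$ straightening of Proposition \ref{straightening1} followed by the relations from $\mathcal{I}_O$) that you use for spanning, with the tableau characterization of \cite{KW93, Pr94} supplying the dimension count. Your additional bookkeeping --- reduction to shape-graded components, the span-plus-cardinality conclusion, and the torus-weight observation --- is sound, and you correctly isolate the termination/correctness of the straightening (including condition (2) of Definition \ref{O-standard} and the letter $\infty$ for $p=2m+1$) as exactly the content that must be imported from \cite{Ki09}.
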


\subsection{}\label{SO-U-invariants} 
Our next task is, from the discussions (\ref{general branching algebra})
and (\ref{SO-flag}), to find an explicit model for the 
$U_{{SO}_{q}}$-invariant subalgebra of $\mathcal{F}_{{SO}}^{(k)}$:
\begin{eqnarray*}
\left( \mathcal{F}_{{SO}}^{(k)}\right) ^{U_{{SO}_{q}}} &=&\sum_{\ell (F)\leq
k}\left( \tau _{p}^{F}\right) ^{U_{{SO}_{q}}}  \\
&=&\sum_{\ell (F)\leq k}\sum_{D}m(\tau _{q}^{D},\tau _{p}^{F})\left( \tau
_{q}^{D}\right) ^{U_{{SO}_{q}}}  
\end{eqnarray*}

\begin{theorem}\label{B-SO_standardmonomialtheory}
The subalgebra $\mathcal{B}_{SO}$ of $\mathcal{F}_{{SO}}$ generated by 
$$ \left\{{\delta}_{I'}+ \mathcal{I}_{SO}: I \in \mathcal{L}_{SO} \right\}$$ is
graded by $\Lambda _{n,n}$, and for each $(F,D)\in \Lambda _{k,k}$ the
$O$ standard monomials ${\Delta}_{\mathsf{t}'}$  
corresponding to standard tableaux $\mathsf{t}$ for $({SO}_{p},{SO}_{q})$
whose shapes are $F/D$ form a 
$\mathbb{C}$-basis of the $(F,D)$-graded component. The dimension of the 
$(F,D)$-graded component is equal to the branching multiplicity of 
$\tau_{q}^{D}$ in $\tau_{p}^{F}$.
\begin{proof}
For $I \in \mathcal{L}_{SO} \subset \mathcal{L}\langle p \rangle$, we defined 
the polynomial ${\delta}_{I'}$ on the space $\mathsf{M}_{p,m}$ in 
(\ref{O_determinant}). 
By (\ref{SO-columns}) and and (\ref{SO-psi}), it is the determinant of a submatrix of 
$Q \in \mathsf{M}_{2m,m}$ obtained by taking consecutive columns $\{1, 2, \cdots, |I| \}$, 
and either consecutive rows $\{1, 2, \cdots, r \}$ or partially consecutive rows 
$\{1, 2, \cdots, r \} \cup \{b_1, \cdots, b_s \}$ or only $\{b_1, \cdots, b_s \}$ of 
$Q$ for $r \leq n$ and $b_i \in \{n+1, n+2, \cdots, p-n\}$.

Since the left action of $U_{q} \subset GL_{p} $, under the embedding given in 
\S \ref{sym-space}, operates the rows of $\mathsf{M}_{p,m}$, 
all the determinants ${\delta} _{I'}$ for $I\in \mathcal{L}_{SO}$ are
invariant under the action of $U_{q}$, and therefor invariant under the action of 
$U_{SO_{q}}$. Since the ideal $\mathcal{I}_{O}$ is stable under the action of $O_{p}$, 
the generators of the algebra $\mathcal{B}_{SO}$ are invariant under the unipotent 
subgroup $U_{SO_{q}}$ of $SO_{q}$, and so are their products. Also, since every 
$I \in \mathcal{L}_{SO}$ satisfies $|I| \leq k$, we have
$\mathcal{B}_{SO} \subseteq \left( \mathcal{F}_{{SO}}^{(k)}\right) ^{U_{{SO}_{q}}}$.

On the other hand, for every chain $I \preceq J$ in $\mathcal{L}_{SO}$, $\delta_{I'} \delta_{J'}$
satisfies the conditions (1) and (2) in Definition \ref{O-standard}, which can be easily seen 
from the statement (3) of Corollary \ref{straightening123} and the fact that $I$ and $J$ from
$\mathcal{L}_{SO}$ do not contain $v_h$ for $1 \leq h \leq n$. This implies that standard monomials
${\Delta}_{\mathsf{t}'}$ corresponding to standard tableaux $\mathsf{t}$ for $({SO}_{p},{SO}_{q})$
project to linearly independent elements in the $U_{SO_{q}}$-invariant subalgebra of
$\mathcal{F}_{{SO}}^{(k)}  \subset \mathcal{F}_{{SO}}$. They span the whole 
$U_{SO_{q}}$-invariant subalgebra of 
$\mathcal{F}_{{SO}}^{(k)}$, because for each $(F,D)\in \Lambda _{k,k}$ the number 
of standard tableaux in $\mathcal{T}_{SO}(F,D)$ is equal to the
multiplicity of $\tau _{q}^{D}$ in $\tau _{p}^{F}$ by 
Proposition \ref{SO-counting}. Furthermore, they are scaled by weight $D$ under 
the action of the diagonal subgroup 
$\{diag(a_{1},\cdots ,a_{n},a_{n}^{-1},\cdots ,a_{1}^{-1})\}$ or 
$\{diag(a_{1},\cdots ,a_{n}, 1 , a_{n}^{-1},\cdots ,a_{1}^{-1})\}$ of ${SO}_{q}$.
Therefore, standard monomials $\Delta_{\mathsf{t}'}$ with 
$\mathsf{t}\in \mathcal{T}_{SO}(F,D)$ are the highest weight vectors of the
copies of $\tau _{q}^{D}$ in $\tau _{p}^{F}$. This shows that
$\mathcal{B}_{SO} = \left( \mathcal{F}_{{SO}}^{(k)}\right) ^{U_{{SO}_{q}}}$
and its graded structure.
\end{proof}
\end{theorem}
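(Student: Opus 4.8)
The plan is to mirror, almost verbatim, the proof of Theorem \ref{B-Sp_standardmonomialtheory} for the symplectic case, since the only genuine differences are the replacement of the isotropic form, the ideal $\mathcal{I}_{O}$, and the $O$-standard monomial conditions (1) and (2) of Definition \ref{O-standard} in place of the single condition $I_s \succeq J_0$. First I would establish the inclusion $\mathcal{B}_{SO} \subseteq (\mathcal{F}_{SO}^{(k)})^{U_{SO_q}}$. For this I would unwind the definition of $\delta_{I'}$ from (\ref{O_determinant}): using the explicit form (\ref{SO-columns}) of the columns in $\mathcal{L}_{SO}$ together with the order bijection $\psi_p$ in (\ref{SO-psi}), each generator is a minor on consecutive columns $\{1,\dots,|I|\}$ and on rows that are either an initial segment $\{1,\dots,r\}$, an initial segment augmented by indices $b_i \in \{n+1,\dots,p-n\}$, or only such $b_i$'s. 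Because the embedding of $SO_q$ in $SO_p$ from \S\ref{sym-space} places $U_q$ as a subgroup of the upper-triangular unipotent of $GL_p$ acting on rows, and because the flagged minors span the top rows, these determinants are $U_q$-invariant, hence $U_{SO_q}$-invariant. The $SO_p$-stability (indeed $O_p$-stability) of $\mathcal{I}_O$ then guarantees the invariance survives passage to the quotient, and invariance of products is automatic. The length bound $|I|\le k$ for $I\in\mathcal{L}_{SO}$ secures membership in $\mathcal{F}_{SO}^{(k)}$.

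Next I would prove that the relevant standard monomials are genuinely $O$-standard and linearly independent. The crucial point is that, for any chain $I\preceq J$ in $\mathcal{L}_{SO}$, the product $\delta_{I'}\delta_{J'}$ already satisfies conditions (1) and (2) of Definition \ref{O-standard}. For condition (1), I would invoke statement (3) of Corollary \ref{straightening123}, which gives $\alpha_h + \beta_h \le 2h$ for the counts of entries $\le h$; translating through $\psi_p$ this yields $\alpha_{2c}+\beta_{2c}\le 2c$. For condition (2), the key observation is that elements of $\mathcal{L}_{SO}$ never contain $v_h$ for $1\le h\le n$ (by the defining shapes (\ref{SO-columns})), so the degenerate configuration where equality forces a $u_c/v_c$ interaction is controlled exactly as in the symplectic argument. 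This shows that standard tableaux $\mathsf{t}$ for $(SO_p,SO_q)$ lift, via Proposition \ref{SO_flagSMT}, to $O$-standard monomials $\Delta_{\mathsf{t}'}$, which therefore project to linearly independent elements.

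To close the argument I would verify spanning and the grading by counting dimensions. For each $(F,D)\in\Lambda_{k,k}$ the number of standard tableaux in $\mathcal{T}_{SO}(F,D)$ equals $m(\sigma_q^D,\sigma_p^F)$ by Proposition \ref{SO-counting}, which matches the dimension of the $U_{SO_q}$-invariants in the $(F,D)$-isotypic piece; thus the linearly independent $O$-standard monomials already span $(\mathcal{F}_{SO}^{(k)})^{U_{SO_q}}$, forcing equality $\mathcal{B}_{SO}=(\mathcal{F}_{SO}^{(k)})^{U_{SO_q}}$. The weight computation under the diagonal torus $\{\mathrm{diag}(a_1,\dots,a_n,a_n^{-1},\dots,a_1^{-1})\}$ (or its odd-orthogonal analogue with a central $1$) shows each $\Delta_{\mathsf{t}'}$ is a highest weight vector of weight $D$, identifying the graded component with the multiplicity space and exhibiting the $\Lambda_{k,k}$-grading.

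I expect the main obstacle to be the careful bookkeeping in verifying condition (2) of the $O$-standard definition, since unlike the purely quadratic symplectic condition ($S_r,T_r\succeq J_0$), the orthogonal condition (2) is a conditional constraint triggered only when $\alpha_{2c}+\beta_{2c}=2c$ holds with equality. One must check that the specific entries $u_c$ and $v_c$ cannot produce a forbidden pattern; here the absence of $v_h$ (for $h\le n$) in columns of $\mathcal{L}_{SO}$ does the work, but confirming that the straightening output of $\delta_{I'}\delta_{J'}$ stays within $\mathcal{L}_{SO}$ (so that no $v_h$ is ever introduced) is the delicate step. Once that stability is secured, the even/odd orthogonal cases $q=2n$ versus $q=2n+1$ are handled uniformly by the parameter $k$ and the extra letter $\infty$ in $\langle 2m+1\rangle$, with no further conceptual input.
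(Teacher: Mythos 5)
Your proposal is correct and follows essentially the same route as the paper's own proof: the same $U_q$-invariance argument on row operations for the inclusion $\mathcal{B}_{SO} \subseteq (\mathcal{F}_{SO}^{(k)})^{U_{SO_q}}$, the same use of statement (3) of Corollary \ref{straightening123} together with the absence of $v_h$ ($h \leq n$) to verify the $O$-standard conditions, and the same dimension count via Proposition \ref{SO-counting} plus the torus weight computation to get spanning and the grading. The ``delicate step'' you flag at the end is exactly the point the paper also handles this way, so no divergence there.
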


In this sense, we call $\mathcal{B}_{SO}$ the \textit{stable range branching algebra for} 
$({SO}_{p}, {SO}_{q})$. Recall that we obtained $\mathcal{B}_{SO}$ by lifting 
the elements of the Hibi algebra $\mathcal{H}_{SO}$ over the distributive lattice 
$\mathcal{L}_{SO}$ which is isomorphic to the distributive lattice 
$\mathcal{L}_{p,k}^{q}$. Now we compare it with the algebra 
$\mathcal{B}_{p,k}^{q}$ (Definition \ref{Bmkn}) obtained from the Hibi algebra 
$\mathcal{H}_{p,k}^{q}$ for the general linear groups.

\begin{proposition}
The stable range branching algebra $\mathcal{B}_{{SO}}$ for 
$({SO}_{p},{SO}_{q})$ is isomorphic to the length $k$ branching algebra $\mathcal{B}_{{p,k}}^{q}$
for $(GL_p, GL_q)$.

\begin{proof}
From the isomorphism $\mathcal{L}_{SO} \cong \mathcal{L}_{p,k}^q$ of distributive 
lattices, with $I \mapsto \hat{I}$, we can consider a bijection between the generating set
of $\mathcal{B}_{SO}$ and the generating set of  $\mathcal{B}_{p,k}^q$:
$$\left\{ {\delta}_{I'} + \mathcal{I}_{O}: I\in \mathcal{L}_{SO}\right\} 
\longleftrightarrow  \left\{ \delta_{\hat{I}} : \hat{I} \in \mathcal{L}_{p,k}^q\right\}$$ 
Then, to see that this bijection gives rise to an algebra isomorphism, let us show
that the straightening relations among $\delta_{\hat{I}}$'s in $\mathcal{B}_{p,k}^q$
agree with those of $(\delta_{I'} + \mathcal{I}_{O})$'s in $\mathcal{B}_{{SO}} 
\subset \mathcal{F}_{SO}$.

As explained in \S \ref{straightening_SO}, to express a product of ${\delta}_{I'}$'s 
as a linear combination of $O$-standard monomials projecting to the quotient 
$\mathcal{F}_{SO}=\mathbb{C}[\mathsf{M}_{p,m}]^{U_m} / \mathcal{I}_{O}$, 
we first apply the straightening relations in $\mathbb{C}[\mathsf{M}_{p,m}]^{U_m}$ 
(Proposition \ref{straightening1}) and then relations from the ideal $\mathcal{I}_{O}$. 

A product of representatives $\prod_i {\delta}_{I'_i}$, as an element in  
$\mathbb{C}[\mathsf{M}_{p,m}]^{U_m}$ can be expressed as a linear combination of $GL_{p}$ 
standard monomials:
\begin{equation}\label{straightening4}
\prod_i {{\delta}} _{I'_i}=\sum_{r} c_{r} \prod_{j\geq 1} {{\delta}}_{K'_{r,j}}
\end{equation}
in  $\mathbb{C}[\mathsf{M}_{p,m}]^{U_m}$. 

Now we claim that for each non-zero term $\prod_{j}{\delta}_{K'_{r,j}}$, 
the indexes $K_{r,j}$'s form a multiple chain in $\mathcal{L}_{SO}$, therefore (\ref{straightening4}) gives $O$-standard monomial expression of $\prod_i {\delta}_{I'_i}$ projecting to $\mathcal{B}_{{SO}} \subset \mathcal{F}_{SO}$. This follows directly from the quadratic relation  (\ref{straightening}). For every chain $I \preceq J$ in $\mathcal{L}_{SO}$, $\delta_{I'} \delta_{J'}$ satisfies the conditions (1) and (2) in Definition \ref{O-standard}, which can be easily seen from the statement (3) of Corollary \ref{straightening123} and the fact that $I$ and $J$ from $\mathcal{L}_{SO}$ do not contain $v_h$ for $1 \leq h \leq n$.

Moreover, from Theorem \ref{B-SO_standardmonomialtheory} and
Proposition \ref{SO-counting}, each $(F,D)$ homogeneous spaces of both algebras 
are of the same dimension with bases labeled by the same patterns.
This shows that two graded algebras are isomorphic to each other.
\end{proof}
\end{proposition}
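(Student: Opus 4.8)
The plan is to follow the blueprint already established for $(\mathrm{Sp}_{2m},\mathrm{Sp}_{2n})$, since the paper has arranged all of the $\mathrm{SO}$ machinery to run in parallel with the symplectic case. First I would invoke the isomorphism of distributive lattices $\mathcal{L}_{SO}\cong\mathcal{L}_{p,k}^{q}$ from \S\ref{SO-distlattice}, writing $I\mapsto\hat I$, to set up the bijection
\begin{equation*}
\left\{\delta_{I'}+\mathcal{I}_{O}: I\in\mathcal{L}_{SO}\right\}\longleftrightarrow\left\{\delta_{\hat I}:\hat I\in\mathcal{L}_{p,k}^{q}\right\}
\end{equation*}
between the generating sets of $\mathcal{B}_{SO}$ and $\mathcal{B}_{p,k}^{q}$. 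Since both algebras are generated by these determinants, it suffices to show that the two sets of defining relations agree; that is, the proof reduces to a comparison of straightening laws.

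The central step concerns how a product $\prod_i\delta_{I'_i}$ of generators of $\mathcal{B}_{SO}$ is rewritten inside $\mathcal{F}_{SO}=\mathbb{C}[\mathsf{M}_{p,m}]^{U_m}/\mathcal{I}_{O}$. As recalled in \S\ref{straightening_SO}, this proceeds in two stages: one first applies the $GL_p$ straightening relations of Proposition \ref{straightening1} to expand $\prod_i\delta_{I'_i}=\sum_r c_r\prod_{j\geq 1}\delta_{K'_{r,j}}$ inside $\mathbb{C}[\mathsf{M}_{p,m}]^{U_m}$, and only then invokes the ideal $\mathcal{I}_{O}$ to rewrite any non-standard survivors as $O$-standard monomials. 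The heart of the argument is to prove that the second stage never fires: for every non-zero term the indices $K_{r,j}$ already form a multiple chain in $\mathcal{L}_{SO}$, so the monomial is $O$-standard and the $GL_p$ expansion is simultaneously the $O$-standard expansion projecting to $\mathcal{B}_{SO}$. Granting this, the structure constants $c_r$ computed in $\mathcal{B}_{p,k}^{q}$ and in $\mathcal{B}_{SO}$ literally coincide, so the bijection of generators extends to an algebra homomorphism.

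To establish this key claim I would reduce to the quadratic case $\delta_{I'}\delta_{J'}=\sum_r c_r\,\delta_{S'_r}\delta_{T'_r}$ for incomparable $I,J\in\mathcal{L}_{SO}$ and verify that each surviving chain $S_r\preceq T_r$ on the right-hand side meets conditions (1) and (2) of Definition \ref{O-standard}. Condition (1), the bound $\alpha_{2c}+\beta_{2c}\leq 2c$, is precisely statement (3) of Corollary \ref{straightening123}; condition (2) then follows because, by the description \eqref{SO-columns}, elements of $\mathcal{L}_{SO}$ contain none of the letters $v_h$ with $1\leq h\leq n$, so the equality case of (1) cannot produce the forbidden $u_c/v_c$ configuration. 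This absence of the small $v_h$ is exactly the structural feature that renders the $\mathcal{I}_{O}$-relations redundant, and confirming that it persists under the iterated straightening is the step I expect to be the main obstacle. Finally, with the relations matched, I would appeal to Theorem \ref{B-SO_standardmonomialtheory} and Proposition \ref{SO-counting} to see that the $(F,D)$-graded components of $\mathcal{B}_{SO}$ and $\mathcal{B}_{p,k}^{q}$ have equal dimension and bases indexed by the same patterns, upgrading the homomorphism to a graded isomorphism $\mathcal{B}_{SO}\cong\mathcal{B}_{p,k}^{q}$.
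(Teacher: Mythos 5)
Your proposal is correct and follows essentially the same route as the paper's own proof: the same bijection of generators via $\mathcal{L}_{SO}\cong\mathcal{L}_{p,k}^{q}$, the same key claim that the $GL_p$ straightening expansion of a product of generators is already $O$-standard (so the $\mathcal{I}_{O}$-relations never intervene), verified in the quadratic case via statement (3) of Corollary \ref{straightening123} together with the absence of $v_h$ for $1\leq h\leq n$, and the same dimension count from Theorem \ref{B-SO_standardmonomialtheory} and Proposition \ref{SO-counting} to conclude the graded isomorphism. No substantive difference from the paper's argument.
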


With this characterization $\mathcal{B}_{{SO}} \cong \mathcal{B}_{p,k}^q$, 
from Theorem \ref{Deformation}, we have

\begin{corollary}
The stable range branching algebra $\mathcal{B}_{{SO}}$ for 
$({SO}_{p},{SO}_{q})$ is\ a flat deformation of the Hibi algebra 
$\mathcal{H}_{SO}$ for $({SO}_{p},{SO}_{q})$, which is isomorphic to $\mathcal{H}_{p,k}^q$.
\end{corollary}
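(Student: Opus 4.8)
The plan is to deduce this corollary directly from the identification $\mathcal{B}_{SO} \cong \mathcal{B}_{p,k}^q$ established in the preceding Proposition, together with the general linear case already treated in Theorem \ref{Deformation}. Since flatness of a deformation is preserved under a graded algebra isomorphism of the general fiber, once we know that $\mathcal{B}_{SO}$ is isomorphic to $\mathcal{B}_{p,k}^q$ it suffices to invoke the flat degeneration of $\mathcal{B}_{p,k}^q$ to its associated Hibi algebra and then match up the special fibers.

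First I would apply Theorem \ref{Deformation} with the substitutions $m \mapsto p$, $n \mapsto q$, and the same length parameter $k$. To do so I must check that the hypotheses hold in the present setting. By the standing assumptions $m > n \geq 2$, together with $p \in \{2m, 2m+1\}$, $q \in \{2n, 2n+1\}$, and $k \in \{n, n-1\}$, we have $p \geq 2m \geq 2(n+1) > 2n+1 \geq q$ and $p > k$, so $(GL_p, GL_q)$ with length $k$ lies in the range to which Theorem \ref{Deformation} applies. That theorem then produces the weight filtration $\mathsf{F}^{wt}$ on $\mathcal{B}_{p,k}^q$ whose Rees algebra $\mathcal{R}^t = \bigoplus_{d \geq 0} \mathsf{F}_d^{wt}(\mathcal{B}_{p,k}^q)\, t^d$ is flat over $\mathbb{C}[t]$, with general fiber $\mathcal{B}_{p,k}^q$ and special fiber the Hibi algebra $\mathcal{H}_{p,k}^q$ over $\mathcal{L}_{p,k}^q$.

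Next I would transport this family through the two isomorphisms already in hand. The preceding Proposition gives a graded algebra isomorphism $\mathcal{B}_{SO} \cong \mathcal{B}_{p,k}^q$, under which the weight filtration pulls back to a filtration on $\mathcal{B}_{SO}$ with an isomorphic Rees algebra; hence $\mathcal{B}_{SO}$ is a flat deformation of the same special fiber $\mathcal{H}_{p,k}^q$. On the other hand, the isomorphism $\mathcal{L}_{SO} \cong \mathcal{L}_{p,k}^q$ of distributive lattices (from \S\ref{SO-distlattice}) yields the isomorphism $\mathcal{H}_{SO} \cong \mathcal{H}_{p,k}^q$ of Hibi algebras. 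Identifying the special fiber accordingly, $\mathcal{B}_{SO}$ is a flat deformation of $\mathcal{H}_{SO}$, which is exactly the assertion.

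I do not expect a genuine obstacle here: the substantive content---verifying that the $GL_p$ straightening relations among the generators $\delta_{I'}$ coincide with those governing $\mathcal{B}_{p,k}^q$---was already carried out in the preceding Proposition, so the remaining argument is a purely formal transport of a flat family through isomorphisms. The only points requiring care are the bookkeeping of the parameter ranges above, needed to legitimately invoke Theorem \ref{Deformation}, and the (routine) observation that being a flat deformation in the sense of the Rees-algebra construction is stable under a graded algebra isomorphism of the general fiber.
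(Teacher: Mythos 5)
Your proposal is correct and follows exactly the paper's route: the paper likewise deduces this corollary immediately from the preceding Proposition's isomorphism $\mathcal{B}_{SO} \cong \mathcal{B}_{p,k}^{q}$ together with Theorem \ref{Deformation} for $(GL_p, GL_q)$, identifying the special fiber via $\mathcal{H}_{SO} \cong \mathcal{H}_{p,k}^{q}$. Your added bookkeeping (parameter-range check and the transport of the Rees-algebra filtration through the isomorphism) simply makes explicit what the paper leaves implicit.
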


\medskip

\subsection*{Acknowledgment} 
The author thanks Roger Howe and Steven Glenn Jackson 
for insightful conversations regarding several aspects of this work.

\end{document}